\newtheorem{theorem}{Theorem}[section]
\newtheorem{lemma}[theorem]{Lemma}
\newtheorem{definition}[theorem]{Definition}
\newtheorem{proposition}[theorem]{Proposition}
\newtheorem{remark}[theorem]{Remark}
\title[Inverse fractional conductivity problem]{Stability estimates for the inverse fractional conductivity problem}
\keywords{Fractional Laplacian, fractional gradient, Calderón problem, conductivity equation}
\subjclass[2020]{Primary 35R30; secondary 26A33, 42B37, 46F12}
\author{Giovanni Covi}
\address{Institut fur Angewandte Mathematik, Ruprecht-Karls-Universit\"at Heidelberg, Im Neuenheimer Feld 205, 69120 Heidelberg, Germany}
\email{giovanni.covi@uni-heidelberg.de}
\author{Jesse Railo}
\address{Department of Pure Mathematics and Mathematical Statistics, University of
Cambridge, Cambridge CB3 0WB, UK}
\email{jr891@cam.ac.uk}
\author{Teemu Tyni}
\address{Department of Mathematics, University of Toronto, Canada}
\email{teemu.tyni@utoronto.ca}
\author{Philipp Zimmermann}
\address{Department of Mathematics, ETH Zurich, Z\"urich, Switzerland}
\email{philipp.zimmermann@math.ethz.ch}
\date{\today}
\newcommand{\R}{{\mathbb R}}
\newcommand{\N}{{\mathbb N}}
\newcommand{\schwartz}{\mathscr{S}}
\newcommand{\tempered}{\mathscr{S}^{\prime}}
\newcommand{\fourier}{\mathcal{F}}
\newcommand{\ifourier}{\mathcal{F}^{-1}}
\newcommand{\vev}[1]{\left\langle#1\right\rangle}
\newcommand{\cdistr}{\mathscr{E}'}
\newcommand{\distr}{\mathscr{D}^{\prime}}
\newcommand{\norm}[1]{\lVert #1 \rVert}
\newcommand{\abs}[1]{\left\lvert #1 \right\rvert}
\newcommand{\ip}[2]{\left\langle #1,#2 \right\rangle}
\DeclareMathOperator{\Div}{div} 
\DeclareMathOperator{\supp}{supp} 
\DeclareMathOperator{\dist}{dist} 
\begin{document}

\maketitle
\begin{abstract}
We study the stability of an inverse problem for the fractional conductivity equation on bounded smooth domains. We obtain a logarithmic stability estimate for the inverse problem under suitable a priori bounds on the globally defined conductivities. The argument has three main ingredients: 1. the logarithmic stability of the related inverse problem for the fractional Schrödinger equation by Rüland and Salo; 2. the Lipschitz stability of the exterior determination problem; 3. utilizing and identifying nonlocal analogies of Alessandrini's work on the stability of the classical Calderón problem. The main contribution of the article is the resolution of the technical difficulties related to the last mentioned step. Furthermore, we show the optimality of the logarithmic stability estimates, following the earlier works by Mandache on the instability of the inverse conductivity problem, and by Rüland and Salo on the analogous problem for the fractional Schrödinger equation.
\end{abstract}


\section{Introduction}

Stability estimates for inverse problems give important information on theoretical limitations of different imaging techniques appearing in various medical, engineering, and scientific applications. They are also useful for development of numerical methods. A common feature of many inverse problems is that they are ill-posed, which means that small measurement errors may lead to large errors in the reconstructed images. One of the most popular model problems is the inverse conductivity problem, known as the Calderón problem \cite{C80}, where one aims to recover the conductivity $\gamma$ from the voltage/current measurements on the boundary $\partial\Omega$ of an object $\Omega$. In mathematical terms, one defines the data as a Dirichlet-to-Neumann (DN) map $\Lambda_\gamma\colon f \mapsto \gamma \partial_\nu u_f|_{\partial \Omega}$, where $\partial_\nu$ is the outer boundary normal derivative, the electric potential $u_f$ is the unique solution of the boundary value problem
\begin{equation}
\label{eq: cond eq}
    \begin{split}
            \Div(\gamma\nabla u)&= 0\quad\text{in}\quad\Omega,\\
            u&= f\quad\text{on}\quad \partial\Omega,
        \end{split}
\end{equation}
and the voltage $f$ is the given Dirichlet boundary condition. The Calderón problem asks to recover $\gamma$ from the knowledge of $\Lambda_\gamma$, which corresponds to knowing the outer normal fluxes (i.e. boundary currents) generated by imposing different boundary voltages $f$.

The Calderón problem serves both as a mathematical model for electrical impedance tomography \cite{GU30years}, and more generally as a prototypical model for inverse problems. In fact, methods and techniques originally developed for the classical Calderón problem have applications in a wide range of other inverse problems, among which the anisotropic Calderón problem \cite{AstalaPaivarintaLassasAnisotropic,FKS-Anisotropic}, hyperbolic problems \cite{RakeshSymesHyperbolic,SunHyperbolic} and inverse problems related to the theory of elasticity \cite{NakamuraUhlmannElasticity}. The work of Sylvester and Uhlmann proved a fundamental uniqueness theorem for the classical Calderón problem in dimension $n \geq 3$, using a reduction to an analogous problem for the Schrödinger equation and constructing the so called complex geometrical optics (or CGO) solutions~\cite{SU87}. Nachman established a reconstruction method \cite{Na88}, and Astala--Päivärinta showed a fundamental uniqueness result when $n=2$ using methods from complex analysis and a reduction to the Beltrami equation \cite{AP06}. We recall the stability theorem of Alessandrini \cite{Alessandrini-stability}, which is an important motivation for our present work:
\begin{theorem}[{Alessandrini \cite[Theorem~1]{Alessandrini-stability}}]
Let $\Omega$ be a bounded domain in $\R^n$, $n\geq 3$, with $C^{\infty}$ boundary $\partial\Omega$. Given $s$ and $E$, $s>n/2$, $E>0$, let $\gamma_1,\gamma_2$ be any two functions in $H^{s+2}(\Omega)$ satisfying the following conditions
\[
    \begin{split}
        &E^{-1}\leq \gamma_{\ell}(x),\quad\text{for every }x\text{ in }\Omega,\,\ell=1,2.\\
        &\|\gamma_{\ell}\|_{H^{s+2}(\Omega)}\leq E,\quad\ell=1,2.
    \end{split}
\]
The following estimate holds~\footnote{Given a bounded linear mapping $A\colon X \to Y$ between two Banach spaces, we denote its operator norm by $\norm{A}_{X \to Y}$.}\label{thm: alessandrini-stability}
\[
\|\gamma_1 - \gamma_2\|_{L^\infty(\Omega)} \leq C_{E}\, \omega(\|\Lambda_{\gamma_1} - \Lambda_{\gamma_2}\|_{H^{1/2}(\partial\Omega)\to H^{-1/2}(\partial\Omega)}),
\]
where the function $\omega$ is such that
\[
\omega(t) \leq |\log t|^{-\delta},\quad\text{for every }t,\, 0<t<1/e,
\]
and $\delta$, $0<\delta<1$, depends only on $n$ and $s$.
\end{theorem}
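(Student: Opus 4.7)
The plan is to follow Alessandrini's classical strategy, which reduces the conductivity problem to a Schrödinger-type problem, combines boundary determination of $\gamma$ and its derivatives with interior information extracted through complex geometrical optics (CGO) solutions, and balances low-frequency stability against high-frequency smoothness to obtain a logarithmic modulus of continuity.

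First I would perform the Liouville substitution $u=\gamma^{-1/2}v$, which transforms the conductivity equation into the Schrödinger equation $(-\Delta+q_\gamma)v=0$ with potential $q_\gamma=\gamma^{-1/2}\Delta\gamma^{1/2}$. The a priori Sobolev bound $\gamma\in H^{s+2}(\Omega)$ with $s>n/2$ ensures, via Sobolev embedding and algebra properties, that $q_\gamma$ is well-defined and uniformly bounded. Next I would establish a boundary determination step: the DN map $\Lambda_\gamma$ (together with its tangential action) determines $\gamma|_{\partial\Omega}$ and $\partial_\nu\gamma|_{\partial\Omega}$, and this determination is Lipschitz-stable in appropriate norms. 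This allows modifying $\gamma_1,\gamma_2$ near $\partial\Omega$ so that they agree to sufficient order on the boundary, which converts the difference $\Lambda_{\gamma_1}-\Lambda_{\gamma_2}$ into control on the Schrödinger DN map difference $\Lambda_{q_1}-\Lambda_{q_2}$ with only a Lipschitz loss.

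The core analytic step invokes CGO solutions $v_j=e^{\rho_j\cdot x}(1+\psi_j)$ with complex frequencies satisfying $\rho_j\cdot\rho_j=0$, $\rho_1+\rho_2=i\xi$, and with remainders $\psi_j$ decaying as $|\rho_j|\to\infty$, uniformly in the admissible class of potentials. Substituting these into Alessandrini's integral identity
\[
\int_\Omega(q_1-q_2)v_1v_2\,dx=\langle(\Lambda_{q_1}-\Lambda_{q_2})v_1|_{\partial\Omega},v_2|_{\partial\Omega}\rangle
\]
yields, after taking $|\rho_j|\to\infty$, a bound of the form
\[
|\widehat{(q_1-q_2)}(\xi)|\lesssim e^{C(|\xi|+\tau)}\|\Lambda_{q_1}-\Lambda_{q_2}\|+\tau^{-1}
\]
for a free parameter $\tau>0$, valid uniformly for $|\xi|\leq R$. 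Splitting $q_1-q_2$ in Fourier space, the low-frequency part is estimated by the DN difference using the inequality above, while the high-frequency tail is controlled by the a priori $H^{s+2}$ bound as $R^{-\delta'}$ for some $\delta'>0$. Optimizing $R$ and $\tau$ in terms of $\varepsilon:=\|\Lambda_{\gamma_1}-\Lambda_{\gamma_2}\|$ balances the exponentially growing term $e^{CR}\varepsilon$ against $R^{-\delta'}$, producing a logarithmic rate $|\log\varepsilon|^{-\delta''}$ for $\|q_1-q_2\|$ in a suitable Sobolev norm.

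To return from $q_\gamma$ to $\gamma$, I would solve the elliptic boundary value problem $\Delta\gamma^{1/2}=q_\gamma\gamma^{1/2}$ in $\Omega$ with the Lipschitz-stably known boundary data $\gamma^{1/2}|_{\partial\Omega}$, and propagate the logarithmic stability from $q_\gamma$ to $\gamma^{1/2}$, then to $\gamma$, using Sobolev interpolation between the low-regularity logarithmic estimate and the a priori $H^{s+2}$ bound, followed by the embedding $H^\sigma\hookrightarrow L^\infty$ for $\sigma>n/2$. The main obstacle I anticipate is controlling the CGO remainders $\psi_j$ uniformly over the admissible class and matching the exponents carefully through the interpolation step, since the final exponent $\delta\in(0,1)$ depends on how much regularity is spent in each of the three reductions (boundary determination, potential recovery, conductivity recovery).
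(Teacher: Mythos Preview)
The paper does not prove this statement: it is quoted verbatim from Alessandrini's original article \cite{Alessandrini-stability} as background and motivation, with no proof given here. Your outline is a faithful sketch of Alessandrini's classical argument (Liouville reduction, boundary determination, CGO solutions in the integral identity, frequency splitting and optimization, and the elliptic step from $q$ back to $\gamma$), so there is nothing in the present paper to compare it against beyond noting that your approach is indeed the standard one the citation refers to.
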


Mandache showed that the logarithmic stability estimates are optimal up to the constants $C,\delta$ \cite{MandacheInstability}. The works of Alessandrini and Mandache therefore show that the classical Calderón problem is ill-posed and furthermore accurately characterize this phenomenon. Mandache's work was recently systematically studied and extended by Koch, Rüland and Salo \cite{KochRulandSaloInstability} to many different settings. For the other recent works on the stability of the classical Calderón problem, we point to the following works \cite{CaroDSFerreiraRuiz, CaroSaloAnisotropic}, where stability under partial data is obtained, and  stability for recovery of anisotropic conductivies is considered. Under certain \emph{a priori} assumptions, such as piecewise constant conductivities, the stronger result of Lipschitz stability holds \cite{AlessandriniVessellaLipschitz}. Lipschitz stability is also possible with a finite number of measurements \cite{AlbertiSantacesariaFinite}. In a different direction, we mention \cite{AbrahamNickl} for an application of stability to the statistical Calderón problem.

In the present work, we study the stability properties of an inverse problem for a nonlocal analogue of the classical Calderón problem. There has been growing interest towards establishing the theory of inverse problems for elliptic nonlocal variable coefficient operators. Other recent studies include inverse problems for the fractional powers of elliptic second order operators \cite{ghosh2021calderon} and inverse problems for source-to-solutions maps related to fractional geometric operators on manifolds \cite{feizmohammadiEtAl2021fractional,QuanUhlmann}. We note that the exterior value inverse problems considered in \cite{ghosh2021calderon} for the operators $(\Div(\gamma \nabla))^s$, $0 < s < 1$, generated by the heat semigroups, give another possibility to define a nonlocal conductivity equation which is presumably different from the equation we study here.

Let $s\in(0,1)$ and consider the exterior value problem for the fractional conductivity equation
\begin{equation}\begin{split}\label{nonlocal-conductivity}
    \mbox{div}_s (\Theta_\gamma\nabla^su)&=0 \quad \mbox{ in } \Omega, \\ u&=f \quad \mbox{ in } \Omega_e ,
\end{split}\end{equation}
where $\Omega_e\vcentcolon=\R^n\setminus\overline{\Omega}$ is the exterior of the domain $\Omega$ and $\Theta_{\gamma}\colon \R^{2n}\to \R^{n\times n}$ is the matrix defined as $\Theta_{\gamma}(x,y)\vcentcolon =\gamma^{1/2}(x)\gamma^{1/2}(y)\mathbf{1}_{n\times n}$. We say $u\in H^s(\R^n)$ is a (weak) solution of \eqref{nonlocal-conductivity} if $u-f \in \widetilde{H}^s(\Omega)$ and
\begin{equation}\label{eq:generalNonlocalOperators}
    B_\gamma(u,\phi):=\frac{C_{n,s}}{2}\int_{\R^{2n}} \frac{\gamma^{1/2}(x)\gamma^{1/2}(y)}{\abs{x-y}^{n+2s}} (u(x)-u(y))(\phi(x)-\phi(y))\,dxdy =0
\end{equation} 
holds for all $\phi \in C_c^\infty(\Omega)$. For all $f\in X\vcentcolon = H^s(\R^n)/\widetilde{H}^s(\Omega)$ in the abstract trace space there is a unique weak solutions $u_f\in H^s(\R^n)$ of the fractional conductivity equation \eqref{nonlocal-conductivity}. The fractional conductivity operator converges in the sense of distributions to the classical conductivity operator when applied to sufficiently regular functions when $s \uparrow 1$ \cite[Lemma 4.2]{covi2019inverse-frac-cond}.

The exterior DN map $\Lambda_{\gamma}\colon X\to X^*$ is defined by 
        \[
        \begin{split}
            \langle \Lambda_{\gamma}f,g\rangle \vcentcolon =B_{\gamma}(u_f,g).
        \end{split}
        \]
The inverse problem for the fractional conductivity equation asks to recover the conductivity $\gamma$ from $\Lambda_{\gamma}$, which maps as $\Lambda_\gamma \colon H^s(\Omega_e)\rightarrow H^{-s}_{\overline \Omega_e}(\R^n)$ in the case of Lipschitz domains. We define $m_\gamma \vcentcolon = \gamma^{1/2}-1$ and call it the \emph{background deviation} of $\gamma$. Let $\Omega \subset \R^n$ be bounded in one direction and $n \geq 1$. (We suppose additionally that $0 < s < 1/2$ when $n=1$.) The uniqueness properties of this inverse problem are studied extensively in the recent literature and the following list summarizes these advances:
\begin{itemize}
    \item \emph{Global uniqueness.} If $W \subset \Omega_e$ is an open nonempty set such that $\gamma_i|_W$ are continuous a.e., and $m_i \in H^{2s,\frac{n}{2s}}(\R^n) \cap H^s(\R^n)$, $j=1,2$, then $\gamma_1=\gamma_2$ if and only if $\Lambda_{\gamma_1}f|_W=\Lambda_{\gamma_2}f|_W$ for all $f \in C_c^\infty(W)$ \cite{RGZ2022GlobalUniqueness}. This result generalizes and expands the scope of the earlier works \cite{covi2019inverse-frac-cond, RZ2022unboundedFracCald}, which solved the inverse problem in certain special cases by means of the fractional Liouville transformation. This is a technique used to reduce the fractional conductivity equation to the fractional Schrödinger equation introduced in \cite{GSU20}, which is in turn better understood.
    \item \emph{Low regularity uniqueness.} If $W \subset \Omega_e$ is an open nonempty set such that $\gamma_i|_W$ are continuous a.e., and $m_i \in H^{s,n/s}(\R^n)$, then $\gamma_1=\gamma_2$ if and only if $\Lambda_{\gamma_1}f|_W=\Lambda_{\gamma_2}f|_W$ for all $f \in C_c^\infty(W)$ \cite{RZ2022LowReg}. This uses a general UCP result for the fractional Laplacians in \cite{KRZ2022Biharm}. 
    \item \emph{Counterexamples for disjoint measurement sets.} For any nonempty open disjoint sets $W_1,W_2\subset\Omega_e$ with $\dist(W_1 \cup W_2, \Omega) > 0$ there exist two different conductivities $\gamma_1,\gamma_2\in L^{\infty}(\R^n) \cap C^\infty(\R^n)$ such that $\gamma_1(x),\gamma_2(x)\geq \gamma_0>0$, $m_1,m_2\in H^{s,n/s}(\R^n) \cap H^{s}(\R^n)$, and $\left.\Lambda_{\gamma_1}f\right|_{W_2}=\left.\Lambda_{\gamma_2}f\right|_{W_2}$ for all $f\in C_c^{\infty}(W_1)$ \cite{RZ2022LowReg}. See the original work on the construction of counterexamples with $H^{2s,\frac{n}{2s}}(\R^n)$ regularity assumptions in \cite{RZ2022counterexamples}, with some limitations in the cases of unbounded domains when $n=2,3$.
\end{itemize}

In this article, we obtain a quantitative stability estimate for the global inverse fractional conductivity problem on \emph{bounded smooth domains} with \emph{full data}. This is based on one of the possible global uniqueness proofs presented in \cite{RGZ2022GlobalUniqueness,RZ2022LowReg}. There remain some nontrivial challenges in order to obtain a quantitative version of the partial data uniqueness results in \cite{RGZ2022GlobalUniqueness,RZ2022LowReg}, as well as to remove the regularity/boundedness assumptions of the domain even for the full data case.

We will next recall two earlier stability results related to the fractional Calderón problems. The first one considers the stable recovery of $\gamma$ in the exterior, based on \cite[Proposition~1.4]{RGZ2022GlobalUniqueness}. The second one considers the stability of the analogous inverse problem for the fractional Schrödinger equation $(-\Delta)^s + q$ due to Rüland and Salo \cite[Theorem~1.2]{RS-fractional-calderon-low-regularity-stability}. The uniqueness properties of the Calderón problem for large classes of fractional Schrödinger type equations have been extensively studied starting from the seminal work of \cite{GSU20}. These include perturbations to the fractional powers of elliptic operators \cite{GLX-calderon-nonlocal-elliptic-operators}, first order perturbations \cite{CLR18-frac-cald-drift}, nonlinear perturbations \cite{LL-fractional-semilinear-problems}, higher order equations with local perturbations \cite{CMR20,CMRU22}, quasilocal perturbations \cite{C21}, and general theory for nonlocal elliptic equations \cite{RS-fractional-calderon-low-regularity-stability,RZ2022unboundedFracCald}.

In particular, the following results are needed in our proofs:
\begin{theorem}[{\cite[Remark~3.3]{RZ2022LowReg}}]
\label{thm: exterior stability}
    Let $\Omega\subset \R^n$ be a domain bounded in one direction and $0<s<1$. Assume that $\gamma_1,\gamma_2\in L^{\infty}(\R^n)$ satisfy $\gamma_1(x),\gamma_2(x)\geq \gamma_0>0$, and are continuous a.e. in $\Omega_e$. There exists a constant $C>0$ depending only on $s$ such that~\footnote{Here and in the rest of paper we use the notation $\norm{A}_*:=\|A\|_{H^s(\Omega_e)\to (H^s(\Omega_e))^*}.$}
    \begin{equation}
    \label{eq: exterior stability}
        \|\gamma_1-\gamma_2\|_{L^{\infty}(\Omega_e)}\leq C\|\Lambda_{\gamma_1}-\Lambda_{\gamma_2}\|_*.
    \end{equation}
\end{theorem}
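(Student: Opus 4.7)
The plan is to adapt the exterior determination argument underlying the uniqueness result \cite[Proposition~1.4]{RGZ2022GlobalUniqueness} into a fully quantitative estimate. The idea is to test $\Lambda_{\gamma_1} - \Lambda_{\gamma_2}$ against $H^s$-normalized bump-type exterior data supported in two small disjoint neighborhoods of a given point $x_0 \in \Omega_e$ sitting at positive distance from $\Omega$, so that only the exterior values $\gamma_i|_{\Omega_e}$ enter the leading singular part of the pairing while the (unknown) interior solutions $u_{i,f}|_{\Omega}$ only contribute a lower-order error.

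First I would fix disjoint relatively open sets $V_1, V_2 \subset \Omega_e$ with $\dist(V_1 \cup V_2, \Omega) > 0$, and $f \in C_c^\infty(V_1)$, $g \in C_c^\infty(V_2)$. Since $u_{i,f} = f$ in $\Omega_e$, writing out the bilinear form \eqref{eq:generalNonlocalOperators} and using the disjointness of the supports of $f$, $g$ and $\Omega$ together with the symmetry of the kernel yields
\[
\langle \Lambda_{\gamma_i} f, g \rangle = -C_{n,s}\!\iint_{V_2 \times V_1}\!\frac{\gamma_i^{1/2}(x) \gamma_i^{1/2}(y) f(y) g(x)}{|x-y|^{n+2s}}\,dxdy - C_{n,s}\!\iint_{V_2 \times \Omega}\!\frac{\gamma_i^{1/2}(x) \gamma_i^{1/2}(y) u_{i,f}(y) g(x)}{|x-y|^{n+2s}}\,dxdy.
\]
The kernel in the second ``interior'' integral is bounded by $\dist(V_2, \Omega)^{-n-2s}$, and the well-posedness bound $\|u_{i,f}\|_{H^s(\R^n)} \lesssim \|f\|_{H^s(\R^n)}$ identifies its contribution to $\langle (\Lambda_{\gamma_1}-\Lambda_{\gamma_2})f,g\rangle$ as a lower-order error under the rescaling of the next step.

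Next, fix a common continuity point $x_0 \in \Omega_e$ of $\gamma_1$ and $\gamma_2$ (a.e.\ point of $\Omega_e$ qualifies by hypothesis), a unit vector $e_1$, and a nonnegative bump $\phi \in C_c^\infty(B_1(0))$ with $\int \phi > 0$. For $\eta > 0$ sufficiently small, set $y_0 \vcentcolon= x_0 - 3\eta e_1 \in \Omega_e$ and
\[
f_\eta(y) \vcentcolon= \eta^{s-n/2}\, \phi((y-y_0)/\eta), \qquad g_\eta(x) \vcentcolon= \eta^{s-n/2}\, \phi((x-x_0)/\eta),
\]
so that $f_\eta, g_\eta$ have disjoint supports in $\Omega_e$ and are normalized in $H^s(\R^n)$ up to a constant depending only on $\phi, n, s$. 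A direct rescaling argument (change of variables $y = y_0 + \eta z$, $x = x_0 + \eta w$; the powers of $\eta$ balance because the kernel is homogeneous of degree $-n-2s$ while each bump has $L^1$-mass of order $\eta^{s+n/2}$) shows that as $\eta \to 0$ the leading integral converges to $-C_{n,s}\,(\gamma_1(x_0)-\gamma_2(x_0))\,K$, where $K = \iint \phi(z)\phi(w)|w-z+3e_1|^{-n-2s}\,dzdw > 0$ is a universal constant, while the interior error vanishes of order $\eta^{s+n/2}$. The key algebraic input is the factorization $\gamma_1-\gamma_2 = (\gamma_1^{1/2}+\gamma_2^{1/2})(\gamma_1^{1/2}-\gamma_2^{1/2})$ together with the continuity of each $\gamma_i$ at $x_0$, which lets $y_0 \to x_0$ collapse the product $\gamma_i^{1/2}(x_0)\gamma_i^{1/2}(y_0)$ into $\gamma_i(x_0)$. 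Combining with the bound $|\langle(\Lambda_{\gamma_1}-\Lambda_{\gamma_2})f_\eta, g_\eta\rangle| \leq \|\Lambda_{\gamma_1}-\Lambda_{\gamma_2}\|_*$ and then taking supremum over the full-measure set of continuity points $x_0 \in \Omega_e$ produces \eqref{eq: exterior stability}.

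The main technical obstacle is verifying the scaling analysis: one must show that the rescaled leading integral has a truly \emph{nonzero} limit (so that dividing by it yields a uniform constant $C$) and that the smooth interior integral is of strictly lower order in $\eta$ --- both are consequences of the degree $-n-2s$ of the kernel being precisely balanced by the exponent of $\eta$ in the $H^s$-normalization of bumps of scale $\eta$, a balance specific to the fractional setting.
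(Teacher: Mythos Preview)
The paper does not prove this theorem itself; it is quoted from \cite[Remark~3.3]{RZ2022LowReg} (which in turn builds on \cite[Proposition~1.4]{RGZ2022GlobalUniqueness}) and used as a black box. Your argument is correct and self-contained: the disjoint-support identity you derive for $\langle\Lambda_{\gamma_i}f,g\rangle$ is right, the scaling balance $\eta^{2(s-n/2)}\cdot\eta^{2n}\cdot\eta^{-(n+2s)}=1$ makes the leading exterior integral converge to $-C_{n,s}\gamma_i(x_0)K$ by dominated convergence (the kernel $|3e_1+w-z|^{-n-2s}$ is bounded below away from zero on the product of unit balls), and the interior remainder carries the factor $\|g_\eta\|_{L^1}\sim\eta^{s+n/2}\to 0$ while $\|u_{i,f_\eta}\|_{H^s}$ stays bounded by well-posedness, so it vanishes in the limit and does not contaminate the final constant.

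The route taken in the cited references is somewhat different: there one uses a \emph{single} sequence of normalized bumps $f_\eta$ concentrating at $x_0$ and the energy identity $\langle\Lambda_\gamma f_\eta,f_\eta\rangle=B_\gamma(u_{f_\eta},u_{f_\eta})$, together with the variational inequality $B_\gamma(u_{f_\eta},u_{f_\eta})\leq B_\gamma(f_\eta,f_\eta)$ and an explicit computation of $B_\gamma(f_\eta,f_\eta)\to\gamma(x_0)\|\phi\|_{\dot H^s}^2$. Your two-bump off-diagonal variant has the advantage that the kernel is never singular (since $V_1\cap V_2=\emptyset$), so no principal-value interpretation is needed and the splitting into exterior and interior parts is completely elementary; the energy approach instead exploits positivity and the minimization principle, which gives slightly sharper constants and requires less tracking of the solution in $\Omega$. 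Both approaches ultimately yield a constant depending only on $n,s$ and a fixed choice of profile $\phi$, which matches the statement.
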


Given a Sobolev multiplier $q \in M(H^s \to H^{-s})$ (cf.~\cite{RS-fractional-calderon-low-regularity-stability,CMRU22}), we define the following bilinear form
    \[
       B_q(u,v)\vcentcolon =\int_{\R^n}(-\Delta)^{s/2}u\,(-\Delta)^{s/2}v\,dx+\langle qu,v\rangle, \quad u,v \in H^s(\R^n),
     \]
related to the fractional Schrödinger operator $(-\Delta)^s + q$.

\begin{theorem}[{\cite[Theorem~1.2]{RS-fractional-calderon-low-regularity-stability}}]\label{thm: ruland salo proposition}
Let $\Omega\subset\R^n$ be a bounded smooth domain, $0<s<1$, and $W_1,W_2 \subset \Omega$ be nonemtpy open sets. Assume that for some $\delta, M>0$ the potentials $q_1,q_2\in H^{\delta,\frac{n}{2s}}(\R^n)$ have the bounds
\[
\| q_j \|_{H^{\delta,\frac{n}{2s}}(\Omega)}\leq M,\quad j=1,2.
\]
Suppose also that zero is not a Dirichlet eigenvalue for the exterior value problem 
\begin{equation}\label{eq: fractional schrodinger}
    (-\Delta)^s u + q_j u = 0,\quad\text{in }\Omega
\end{equation}
with $u|_{\Omega_e}=0$, for $j=1,2$. Then one has~\footnote{Note $\norm{A}_{\widetilde{H}^s(W_1) \to (\widetilde{H}^{s}(W_2))^*} =\sup\{\,\abs{\langle Au_1,u_2 \rangle} \,;\, \norm{u_j}_{H^s(\R^n)}=1,u_j \in C_c^\infty(W_j)\,\}$.}
\[
\| q_1 - q_2 \|_{L^{\frac{n}{2s}}(\Omega)} \leq \omega(\| \Lambda_{q_1} - \Lambda_{q_2} \|_{\widetilde{H}^s(W_1) \to (\widetilde{H}^s(W_2))^*}),
\]
where $\Lambda_{q_j}\colon X\to X^*$ with $\vev{\Lambda_{q_j}f,g}:=B_{q_j}(u_f,g)$ is the DN map related to the exterior value problem for equation \eqref{eq: fractional schrodinger}, and $\omega$ is a modulus of continuity satisfying
\[
\omega(x) \leq C|\log x|^{-\sigma}, \quad 0< x\leq 1
\]
for some $C$ and $\sigma$ depending only on $\Omega$, $n$, $s$, $W_1$, $W_2$, $\delta$ and $M$.
\end{theorem}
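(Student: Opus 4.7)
The plan is to follow the now-standard scheme for fractional Calderón problems that combines an Alessandrini-type orthogonality identity with a quantitative Runge approximation and a high/low frequency splitting of the potential. First I would establish the identity
\[
\langle (\Lambda_{q_1} - \Lambda_{q_2}) f_1, f_2 \rangle = \int_\Omega (q_1 - q_2)\, u_{1,f_1}\, u_{2,f_2} \, dx,
\]
valid for $f_j \in C_c^\infty(W_j)$, where $u_{j,f_j} \in H^s(\R^n)$ is the unique solution of $(-\Delta)^s u + q_j u = 0$ in $\Omega$ with exterior value $f_j$. This reduces the problem to extracting $q_1 - q_2$ in $L^{n/(2s)}(\Omega)$ from pairings $\int_\Omega (q_1-q_2) u_1 u_2 \,dx$.

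Next I would prove a quantitative Runge approximation property: for any $v \in H^s(\R^n)$ with $\supp v \subset \overline \Omega$ and any $\eps \in (0,1)$, there exists $f \in C_c^\infty(W_j)$ whose solution satisfies $\| u_{j,f} - v \|_{H^s(\Omega)} \leq \eps$ while $\| f \|_{H^s(\R^n)} \leq \exp(C \eps^{-\mu})$ for suitable $C,\mu>0$. This is obtained by dualizing a quantitative unique continuation estimate for $(-\Delta)^s + q_j$: the qualitative UCP of the fractional Laplacian is upgraded to a three-ball / propagation-of-smallness inequality via the Caffarelli--Silvestre extension and a Carleman/doubling argument, and the potential $q_j$ is absorbed using the Sobolev multiplier bound $\| q_j \|_{H^{\delta,n/(2s)}(\Omega)} \leq M$ together with the endpoint Sobolev embedding.

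The final step is a frequency decomposition $q \vcentcolon= q_1 - q_2 = P_{\leq r} q + (I - P_{\leq r}) q$ at a cutoff $r > 0$. The high-frequency tail is controlled via Bernstein/interpolation at the critical exponent,
\[
\| (I - P_{\leq r}) q \|_{L^{n/(2s)}(\Omega)} \lesssim M\, r^{-\delta},
\]
while the low-frequency part is tested by applying the Alessandrini identity to pairs $(u_{1,f_1}, u_{2,f_2})$ that approximate in $H^s(\Omega)$ a sufficiently rich family of smooth products (e.g.\ tensor products of frequency-localized bumps dual to $P_{\leq r}q$). The quantitative Runge property then yields
\[
\| P_{\leq r} q \|_{L^{n/(2s)}(\Omega)} \leq C \exp(C r^k)\, \| \Lambda_{q_1} - \Lambda_{q_2} \|_{\widetilde{H}^s(W_1) \to (\widetilde{H}^s(W_2))^*}
\]
for some $k>0$. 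Choosing $r$ to be an appropriate positive power of $|\log \| \Lambda_{q_1} - \Lambda_{q_2} \||$ balances these two contributions and produces the logarithmic modulus of continuity $\omega$.

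The main obstacle is that everything has to close at the critical exponent $n/(2s)$ using only an $H^{\delta,n/(2s)}$ a priori bound on $q_j$, rather than the $L^\infty$ assumption under which this type of argument was first carried out. Keeping careful track of Sobolev multiplier norms, so that the quantitative UCP, the quantitative Runge approximation, and the frequency-splitting estimate are all simultaneously compatible with this low-regularity setting without destroying either the exponential cost in the Runge step or the polynomial decay in the high-frequency tail, is where the technical effort concentrates.
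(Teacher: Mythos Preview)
The paper does not prove this statement at all: it is quoted as \cite[Theorem~1.2]{RS-fractional-calderon-low-regularity-stability} and used only as a black-box ingredient in the proof of the main Theorem~\ref{thm: stability estimate}. There is therefore no proof in this paper to compare your proposal against.

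For what it is worth, your outline is a fair high-level summary of the argument in the cited Rüland--Salo paper: Alessandrini identity, quantitative Runge approximation obtained by dualizing a quantitative unique continuation estimate through the Caffarelli--Silvestre extension, and a high/low frequency splitting optimized in the cutoff parameter. One point of imprecision: in the actual argument the low-frequency part is handled by using the Runge property to approximate the specific pair $(1, e^{ix\cdot\xi})$ by solutions, which yields a pointwise bound on the Fourier transform of $(q_1-q_2)\chi_\Omega$; the $L^{n/(2s)}$ conclusion then comes from combining this with the a~priori $H^{\delta,n/(2s)}$ bound via interpolation. Your phrasing about testing against ``tensor products of frequency-localized bumps dual to $P_{\leq r}q$'' is vaguer than what is done and would need to be made concrete, but the overall architecture you describe is the right one.
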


\begin{lemma}[{Liouville reduction, \cite[Lemma 3.9]{RZ2022LowReg}}]
\label{lemma: Liouville reduction}
    Let $0<s<\min(1,n/2)$. Assume that $\gamma\in L^{\infty}(\R^n)$ with conductivity matrix $\Theta_{\gamma}$ and background deviation $m$ satisfies $\gamma(x)\geq \gamma_0>0$ and $m\in H^{s,n/s}(\R^n)$. Let $q_\gamma:=\frac{(-\Delta)^sm}{\gamma^{1/2}}$. Then there holds
    \begin{equation}
    \label{eq: Liouville reduction}
    \begin{split}
        \langle\Theta_{\gamma}\nabla^su,\nabla^s\phi\rangle_{L^2(\R^{2n})}=&\,\langle (-\Delta)^{s/2}(\gamma^{1/2}u),(-\Delta)^{s/2}(\gamma^{1/2}\phi))\rangle_{L^2(\R^n)}\\
        &+\langle q_\gamma(\gamma^{1/2}u),(\gamma^{1/2}\phi)\rangle
    \end{split}
    \end{equation}
    for all $u,\phi\in H^s(\R^n)$.
\end{lemma}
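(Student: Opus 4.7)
My plan is to establish the identity by combining a pointwise algebraic manipulation with the Gagliardo seminorm representation
\[
\frac{C_{n,s}}{2}\int_{\R^{2n}}\frac{(f(x)-f(y))(g(x)-g(y))}{|x-y|^{n+2s}}\,dx\,dy = \langle (-\Delta)^{s/2}f,(-\Delta)^{s/2}g\rangle_{L^2(\R^n)}.
\]
Writing $\alpha:=\gamma^{1/2}$, the key algebraic step is the pointwise identity
\[
\bigl(\alpha u(x)-\alpha u(y)\bigr)\bigl(\alpha\phi(x)-\alpha\phi(y)\bigr) = \alpha(x)\alpha(y)(u(x)-u(y))(\phi(x)-\phi(y)) + (\alpha(x)-\alpha(y))\bigl((\alpha u\phi)(x)-(\alpha u\phi)(y)\bigr),
\]
which one verifies by expanding both products and collecting terms (one checks that the difference of the two quadratic forms on the left equals $(a-b)(au\phi - bv\psi)$ with $a=\alpha(x),b=\alpha(y)$, etc.).

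Dividing by $|x-y|^{n+2s}$, multiplying by $C_{n,s}/2$, and integrating over $\R^{2n}$ converts the left-hand side into $\langle(-\Delta)^{s/2}(\gamma^{1/2}u),(-\Delta)^{s/2}(\gamma^{1/2}\phi)\rangle$, the first term on the right into $\langle\Theta_\gamma\nabla^s u,\nabla^s\phi\rangle$ through the defining formula of $B_\gamma$, and the cross term into $\langle(-\Delta)^{s/2}\alpha,(-\Delta)^{s/2}(\alpha u\phi)\rangle = \langle(-\Delta)^s\alpha,\alpha u\phi\rangle$ by the self-adjointness of $(-\Delta)^{s/2}$. Since $(-\Delta)^s$ annihilates constants, $(-\Delta)^s\alpha = (-\Delta)^s m$, and multiplying and dividing by $\alpha$ inside the integral gives
\[
\langle(-\Delta)^s m,\alpha u\phi\rangle = \int_{\R^n}\frac{(-\Delta)^s m}{\alpha}(\alpha u)(\alpha\phi)\,dx = \langle q_\gamma(\gamma^{1/2}u),(\gamma^{1/2}\phi)\rangle,
\]
by the definition of $q_\gamma$. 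Rearranging (and matching the sign conventions used in the paper's definitions of $\nabla^s$ and $\mbox{div}_s$) then yields the claimed Liouville reduction.

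The main technical obstacle is justifying the absolute convergence of these Gagliardo double integrals and the meaning of the duality pairing involving $(-\Delta)^s m$, at the stated regularity $m\in H^{s,n/s}(\R^n)$ with $0<s<\min(1,n/2)$. Here the key ingredient is that $H^{s,n/s}(\R^n)$ embeds continuously into the space of Sobolev multipliers on $H^s(\R^n)$, so $\alpha u,\alpha\phi\in H^s(\R^n)$ and both Gagliardo seminorms on the right are finite. For the cross term, the fractional Sobolev embedding $H^s(\R^n)\hookrightarrow L^{2n/(n-2s)}(\R^n)$ together with Hölder's inequality places $\alpha u\phi$ into a space dual to $H^{-s,n/s}(\R^n)\ni(-\Delta)^s m$, so the pairing is bounded bilinearly in $(u,\phi)$. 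Once every term is shown to be continuous on $H^s(\R^n)\times H^s(\R^n)$, it suffices to verify the identity on the dense subspace $u,\phi\in C_c^\infty(\R^n)$, where Fubini and the classical Riesz kernel formulas legitimize all pointwise manipulations; the general case follows by density and continuity.
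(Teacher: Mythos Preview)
The paper does not give its own proof of this lemma; it is quoted from \cite[Lemma~3.9]{RZ2022LowReg} and used as a tool. So there is no in-paper argument to compare against.

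Your approach is the standard one and coincides with what is done in the cited reference (and earlier in \cite{covi2019inverse-frac-cond}): the pointwise identity you wrote down is correct and is the algebraic core of the proof, and the density-plus-continuity scheme is the right way to extend from $C_c^\infty$ to all of $H^s\times H^s$.

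One point should be tightened. You write the cross term as an honest integral $\int_{\R^n}\frac{(-\Delta)^s m}{\alpha}(\alpha u)(\alpha\phi)\,dx$, but at the stated regularity $m\in H^{s,n/s}(\R^n)$ the distribution $(-\Delta)^s m$ lies only in $H^{-s,n/s}(\R^n)$ and need not be a function; moreover, this integral interpretation does not obviously give continuity in $u,\phi$ separately in $H^s$. The route taken in \cite{RZ2022LowReg} is to \emph{define} $\langle q_\gamma v,w\rangle$ as $-\langle(-\Delta)^{s/2}m,(-\Delta)^{s/2}(\gamma^{-1/2}vw)\rangle_{L^2}$ and to prove that this is a bounded bilinear form on $H^s\times H^s$ by showing $\gamma^{-1/2}vw\in H^s$ with a Kato--Ponce/Leibniz-type estimate (this is where $m\in H^{s,n/s}$ and the Sobolev embedding $H^s\hookrightarrow L^{2n/(n-2s)}$ are really used together). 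Your H\"older-based duality sketch gestures at this but does not quite close the loop; once you invoke that multiplier estimate explicitly, the proof is complete.
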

\noindent In the sequel, we will call $q_\gamma$ above  a potential.

\subsection{Main results}

We next state our main result, whose proof is based on a reduction to Theorems~\ref{thm: exterior stability} and \ref{thm: ruland salo proposition}.
\begin{theorem}\label{thm: stability estimate}
Let $0<s<\min(1,n/2)$, $\epsilon>0$ and assume that $\Omega\subset\R^n$ is a smooth bounded domain. Suppose that the the conductivities $\gamma_1,\gamma_2\in L^{\infty}(\R^n)$ with background deviations $m_1,m_2$ fulfill the following conditions:
    \begin{enumerate}[(i)]
        \item\label{item: main assumption 1} $\gamma_0\leq \gamma_1(x),\gamma_2(x)\leq \gamma_0^{-1}$ for some $0<\gamma_0<1$,
        \item\label{item: main assumption 2} $m_1,m_2\in  H^{4s+2\epsilon,\frac{n}{2s}}(\R^n)$
        and there exists $C_1>0$ such that
        \begin{equation}
        \label{eq: main bound 1}
            \|m_i\|_{H^{4s+2\epsilon,\frac{n}{2s}}(\R^n)}\leq C_1
        \end{equation}
        for $i=1,2$,
        \item\label{item: main assumption 3} $m_1-m_2\in H^s(\R^n)$ and there exists $C_2>0$
        \begin{equation}
        \label{eq: difference bound.2}
            \|(-\Delta)^sm_i\|_{L^1(\Omega_e)}\leq C_2
        \end{equation}
        for $i=1,2$.
    \end{enumerate} 
    If $\theta_0\in (\max(1/2,2s/n),1)$ and there holds $\| \Lambda_{\gamma_1}-\Lambda_{\gamma_2}\|_*\leq 3^{-1/\delta}$ for some $0<\delta<\frac{1-\theta_0}{2}$, then we have
    \begin{equation}
    \label{eq: stability estimate}
       \| \gamma_1^{1/2} - \gamma_2^{1/2} \|_{L^q(\Omega)} \leq \omega(\|\Lambda_{\gamma_1} - \Lambda_{\gamma_2}\|_*)
    \end{equation}
    for all $1\leq q \leq \frac{2n}{n-2s}$, where $\omega(x)$ is a logarithmic modulus of continuity satisfying
    \[
     \omega(x) \leq C|\log x|^{-\sigma},\quad\text{for}\quad 0 < x\leq 1,
    \]
    for some constants $\sigma,C>0$ depending only on $s,\epsilon,n,\Omega, C_1,C_2,\theta_0$ and $\gamma_0$.
\end{theorem}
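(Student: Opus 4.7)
The plan is to reduce the statement to the two results already available in the literature---the Lipschitz exterior stability of Theorem~\ref{thm: exterior stability} and the Rüland--Salo logarithmic stability for the fractional Schrödinger Calderón problem of Theorem~\ref{thm: ruland salo proposition}---bridged by the Liouville identity of Lemma~\ref{lemma: Liouville reduction}. The genuinely new part, and the one I expect to be the main obstacle, is a nonlocal analogue of Alessandrini's interpolation step that converts ``potential stability'' back to ``conductivity stability''.

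First I would apply Theorem~\ref{thm: exterior stability} to obtain
\[
\|m_1-m_2\|_{L^{\infty}(\Omega_e)}=\|\gamma_1^{1/2}-\gamma_2^{1/2}\|_{L^{\infty}(\Omega_e)}\leq C\|\Lambda_{\gamma_1}-\Lambda_{\gamma_2}\|_*,
\]
so that the background deviations $m_i=\gamma_i^{1/2}-1$ are already controlled Lipschitz-stably in the exterior.

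Next I would define the fractional Schrödinger potentials $q_i:=(-\Delta)^s m_i/\gamma_i^{1/2}$. Using assumption (ii) together with pointwise multiplier and fractional Leibniz type estimates (and the lower bound $\gamma_i\geq\gamma_0$), one checks that $q_i\in H^{\delta,n/(2s)}(\R^n)$ for some $\delta>0$, with norm controlled by $C_1$, $\gamma_0$, $s$, $\epsilon$, $n$. By Lemma~\ref{lemma: Liouville reduction} the fractional conductivity DN map is related to the fractional Schrödinger DN map via the substitution $u\mapsto\gamma^{1/2}u$, which is the identity on $\Omega$ and differs from the identity on $\Omega_e$ by a factor controlled in $L^\infty$ by the first step. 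After verifying the absence of a zero Dirichlet eigenvalue (which follows from the positivity of the conductivity bilinear form), this gives
\[
\|\Lambda_{q_1}-\Lambda_{q_2}\|_{\widetilde{H}^s(\Omega_e)\to (\widetilde{H}^s(\Omega_e))^*}\leq C\|\Lambda_{\gamma_1}-\Lambda_{\gamma_2}\|_*,
\]
to which I apply Theorem~\ref{thm: ruland salo proposition} with $W_1=W_2=\Omega_e$ (viewed as admissible measurement sets after the suitable interpretation) to obtain a logarithmic stability bound for $\|q_1-q_2\|_{L^{n/(2s)}(\Omega)}$.

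The hardest step converts this potential stability into stability of $v:=m_1-m_2$. Subtracting the Liouville identities $(-\Delta)^s m_i=q_i\gamma_i^{1/2}$ yields
\[
(-\Delta)^s v=\gamma_1^{1/2}(q_1-q_2)+(\gamma_1^{1/2}-\gamma_2^{1/2})q_2,
\]
a nonlocal Poisson-type equation with exterior data $v|_{\Omega_e}$ controlled by step one and a ``good'' source term in $L^{n/(2s)}(\Omega)$ controlled by step two. The dangerous coupling term $(\gamma_1^{1/2}-\gamma_2^{1/2})q_2$ prevents a direct inversion, because an $L^\infty$ norm of $v$ on $\Omega$ would reappear on the right. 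To break this circularity I would invert $(-\Delta)^s$ using the Riesz potential $I_{2s}$ together with Hardy--Littlewood--Sobolev to recover $v$ in $L^{2n/(n-2s)}(\Omega)$, and then interpolate this ``small'' estimate against the a priori bound $\|m_i\|_{H^{4s+2\epsilon,n/(2s)}(\R^n)}\leq C_1$. The parameter $\theta_0\in(\max(1/2,2s/n),1)$ is exactly the interpolation exponent arising here: the lower bound $2s/n$ reflects the Sobolev embedding used to absorb the coupling term, while the upper bound $1$ and the smallness threshold $\|\Lambda_{\gamma_1}-\Lambda_{\gamma_2}\|_*\leq 3^{-1/\delta}$ are needed to keep the interpolation constants uniform and the rate logarithmic. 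Once the estimate is established for $q=2n/(n-2s)$, Hölder's inequality with $|\Omega|<\infty$ extends it to all $1\leq q\leq 2n/(n-2s)$, completing the argument. The principal technical difficulty is this last interpolation: inverting $(-\Delta)^s$ with nontrivial exterior data and absorbing $(\gamma_1^{1/2}-\gamma_2^{1/2})q_2$ without destroying the logarithmic modulus.
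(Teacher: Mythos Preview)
Your overall architecture---exterior stability, Liouville reduction, Rüland--Salo, then back to $m_1-m_2$---matches the paper. But there is a genuine gap in your second step, and it is precisely the step that the paper identifies as the main technical contribution.

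You claim a Lipschitz bound $\|\Lambda_{q_1}-\Lambda_{q_2}\|_*\leq C\|\Lambda_{\gamma_1}-\Lambda_{\gamma_2}\|_*$ on the grounds that the substitution $f\mapsto\gamma_i^{-1/2}f$ in $\langle\Lambda_{q_i}f,g\rangle=\langle\Lambda_{\gamma_i}(\gamma_i^{-1/2}f),\gamma_i^{-1/2}g\rangle$ is ``controlled in $L^\infty$''. This is not enough. When you telescope, you must estimate $\|(\gamma_1^{-1/2}-\gamma_2^{-1/2})g\|_{H^s(\Omega_e)}$, and multiplication into $H^s$ requires a $C^{0,\alpha}$ bound with $\alpha>s$, not merely $L^\infty$. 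The exterior stability of Theorem~\ref{thm: exterior stability} only gives you the latter. To get a \emph{small} Hölder norm from the small $L^\infty$ norm you must interpolate against an a~priori Sobolev bound on $m_i$ via a Gagliardo--Nirenberg inequality; this is exactly where $\theta_0$ enters, and the outcome is the Hölder-type bound
\[
\|\Lambda_{q_1}-\Lambda_{q_2}\|_*\leq C\bigl(\|\Lambda_{\gamma_1}-\Lambda_{\gamma_2}\|_*+\|\Lambda_{\gamma_1}-\Lambda_{\gamma_2}\|_*^{1/2}+\|\Lambda_{\gamma_1}-\Lambda_{\gamma_2}\|_*^{(1-\theta_0)/2}\bigr),
\]
not a Lipschitz one. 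So your localisation of $\theta_0$ to the final step is wrong: the interpolation lives in the DN-map reduction, not in the potential-to-conductivity conversion.

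Your last step also diverges from the paper. Your equation $(-\Delta)^s v=\gamma_1^{1/2}(q_1-q_2)+v\,q_2$ contains the circular term $v\,q_2$ you flag as ``dangerous'', and your proposed cure (Riesz potential + HLS + interpolation) is vague about how to absorb it. The paper avoids this entirely: setting $\widetilde m:=m/\gamma_1^{1/2}$, one computes that $\widetilde m$ solves the \emph{conductivity} equation $\mathrm{div}_s(\Theta_{\gamma_1}\nabla^s\widetilde m)=\gamma_1^{1/2}\gamma_2^{1/2}(q_2-q_1)$ in all of $\R^n$, with no coupling term. Testing with $\widetilde m$ and using coercivity plus the Sobolev embedding $\dot H^s\hookrightarrow L^{2n/(n-2s)}$ gives $\|m\|_{L^q(\Omega)}\leq C(\|q_1-q_2\|_{L^{n/(2s)}(\Omega)}+\|m\|_{L^\infty(\Omega_e)})$, where the $\Omega_e$ integral is handled by assumption~\ref{item: main assumption 3} (the $L^1$ bound on $(-\Delta)^s m_i$ in $\Omega_e$, which you never invoke). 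This energy-method route is both cleaner and where assumption~\ref{item: main assumption 3} is actually used.
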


\begin{remark} We make several comments about Theorem \ref{thm: stability estimate} and its assumptions to clarify some interesting points:
\begin{enumerate}[(i)]
    \item Theorems \ref{thm: exterior stability} and \ref{thm: stability estimate} together imply that for any compact set $K \subset \R^n$ there holds
    \begin{equation}
       \| \gamma_1^{1/2} - \gamma_2^{1/2} \|_{L^q(K)} \leq \omega(\|\Lambda_{\gamma_1} - \Lambda_{\gamma_2}\|_*)
    \end{equation}
    where $\omega$ is a logarithmic modulus of continuity with a constant $C$ additionally depending on $K$. In general, one has $L^\infty$ control in $\Omega_e$ and $L^q$ control in $\Omega$.
    \item The $L^1$ assumption \eqref{eq: difference bound.2} is required due to the noncompact, global, setting of the problem, and $L^\infty$ stability in the exterior. The stability estimate in the exterior forces us to impose the additional condition that the related potentials $q_i \vcentcolon = \frac{(-\Delta)^sm_i}{\gamma_i^{1/2}} \in (L^\infty(\Omega_e))^*$ with a priori bounds in their norms, and hence \eqref{eq: difference bound.2} is a natural assumption.
    \item We assume that the domain has smooth boundary due to Theorem \ref{thm: ruland salo proposition}, as one has to impose the smoothness assumption in order to use the Vishik--Eskin estimates. In light of \cite[Remark 7.1]{RS-fractional-calderon-low-regularity-stability}, the rest of their proof could be formulated under weaker regularity assumptions, as well as the one of Theorem \ref{thm: stability estimate} (see Section \ref{sec: partial data reduction}). This leaves the interesting open question of whether it is possible to obtain the stability results, i.e. Theorems \ref{thm: ruland salo proposition} and \ref{thm: stability estimate}, for less regular domains.
    \item We impose the assumption \eqref{eq: main bound 1} so that the related potentials satisfy $q_i \in H^{\delta,\frac{n}{2s}}(\Omega)$ for some $\delta > 0$, and Theorem \ref{thm: ruland salo proposition} applies. The assumption \eqref{eq: main bound 1} also implies that $\gamma_1,\gamma_2$ are continuous, so that Theorem \ref{thm: exterior stability} is known to apply. This assumption is much stronger than the ones required for the global uniqueness theorems \cite{RGZ2022GlobalUniqueness,RZ2022LowReg}.
    \item By formally taking $s=1$ in Theorem \ref{thm: stability estimate} and comparing with Theorem \ref{thm: alessandrini-stability}, one sees that the latter has slightly sharper differentiability assumptions when $n=3$, and the reverse is true for $n\geq 5$. In dimension $n=4$, the assumptions of the two theorems are comparable.
\end{enumerate}
\end{remark}

We complement our stability estimate with the following statement on exponential instability under partial data.
\begin{theorem}\label{thm: instability}
Let $B_1\subset \R^n$ be the unit ball. For any $\ell\in\R_+\setminus\N$ such that $\ell-2s\in\R_+\setminus\N$ as well there exists a constant $\beta>0$ such that for all sufficiently small $\epsilon>0$ there are conductivities $\gamma_1,\gamma_2 \in C^\ell(\R^n)$ such that
$$ \|\Lambda_{\gamma_1}-\Lambda_{\gamma_2}\|_{H^s(B_3\setminus\overline B_2)\rightarrow (H^s(B_3\setminus\overline B_2))^*}  \leq \exp \left(-\epsilon^{-\frac{n}{(2n+3)\ell}}\right), $$
$$ \|\gamma_1-\gamma_2\|_{L^\infty(B_1)}\geq \epsilon, $$
$$ \|\gamma_i\|_{C^\ell(B_1)}\leq \beta, \quad 1\leq\gamma_i\leq 2, \qquad i=1,2. $$
\end{theorem}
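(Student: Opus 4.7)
The plan is to follow the Mandache-style pigeonhole/entropy method \cite{MandacheInstability}, as adapted to the nonlocal setting by R\"uland--Salo \cite{RS-fractional-calderon-low-regularity-stability} and systematized in \cite{KochRulandSaloInstability}. I will exhibit an exponentially large family of pairwise $\epsilon$-separated $C^\ell$-bounded conductivities, bound the $\eta$-entropy of the induced partial-data DN maps, and conclude by pigeonhole.

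For the construction, fix a nonnegative bump $\phi \in C_c^\infty(B_{1/2})$ and, for an integer $N \sim \epsilon^{-1/\ell}$, distribute $\sim N^n$ disjoint scaled copies $\phi_{j,N}(x) = N^{-\ell}\phi(N(x-x_j))$ on a regular lattice inside $B_1$. Consider
\[ \gamma_\sigma = 1 + \tfrac{1}{2}\sum_j \sigma_j \phi_{j,N}, \qquad \sigma \in \{0,1\}^{N^n}. \]
A Gilbert--Varshamov-type selection extracts a subfamily $\mathcal{F}$ of cardinality at least $\exp(c_1 \epsilon^{-n/\ell})$ whose elements are pairwise $\epsilon$-separated in $L^\infty(B_1)$ while satisfying $1 \leq \gamma_\sigma \leq 2$ and $\|\gamma_\sigma\|_{C^\ell(B_1)} \leq \beta$ uniformly.

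The main analytic step is to show that $\{\Lambda_{\gamma_\sigma}\}_{\gamma_\sigma \in \mathcal{F}}$ admits an $\eta$-net of cardinality at most $\exp(C_2 |\log \eta|^{2n+3})$ with respect to the norm $\|\cdot\|_{H^s(B_3\setminus \overline{B_2}) \to (H^s(B_3\setminus \overline{B_2}))^*}$. To this end I pass to the fractional Schr\"odinger picture via the Liouville reduction (Lemma~\ref{lemma: Liouville reduction}); each $\gamma_\sigma$ yields a potential $q_\sigma = (-\Delta)^s m_\sigma / \gamma_\sigma^{1/2}$ uniformly bounded in $C^{\ell-2s}(\R^n)$, where the hypothesis $\ell - 2s \in \R_+ \setminus \N$ secures a quantitative $C^\ell \to C^{\ell-2s}$ mapping property of $(-\Delta)^s$. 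Since $B_3\setminus \overline{B_2}$ is strictly separated from $\supp(m_\sigma) \subset B_1$, the partial-data Poisson operator associated with $(-\Delta)^s + q_\sigma$ is smoothing of infinite order on this exterior annulus, and its singular values decay super-polynomially; a Schatten-class enumeration following \cite[Section~6]{RS-fractional-calderon-low-regularity-stability}, transferred back to $\Lambda_{\gamma_\sigma}$ (which on the exterior differs from the Schr\"odinger DN map by multiplication by $\gamma^{1/2}$), yields the stated $\eta$-entropy bound with exponent $2n+3$.

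Comparing entropies, whenever $\exp(c_1 \epsilon^{-n/\ell}) > \exp(C_2 |\log \eta|^{2n+3}) + 1$ the pigeonhole principle produces two distinct $\gamma_1, \gamma_2 \in \mathcal{F}$ with $\|\Lambda_{\gamma_1} - \Lambda_{\gamma_2}\|_{H^s(B_3\setminus\overline{B_2}) \to (H^s(B_3\setminus\overline{B_2}))^*} \leq 2\eta$ while $\|\gamma_1 - \gamma_2\|_{L^\infty(B_1)} \geq \epsilon$; the balanced choice $\eta = \exp(-\epsilon^{-n/((2n+3)\ell)})$ then yields the theorem. The main obstacle is the entropy bound on DN maps with the correct exponent $2n+3$: it requires precise singular-value estimates for the partial-data Poisson operator, quantitative in the $C^{\ell - 2s}$-control of $q_\sigma$, together with careful bookkeeping through the Liouville reduction when transferring the bound back to $\Lambda_{\gamma_\sigma}$. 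In contrast, the separation step and the pigeonhole argument are standard once the entropy bound is in hand.
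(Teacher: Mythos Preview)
Your proposal is correct and follows essentially the same Mandache-style entropy/pigeonhole argument as the paper: both construct an $\epsilon$-discrete family of conductivities of size $\exp(c\epsilon^{-n/\ell})$, pass to the Schr\"odinger picture via the Liouville reduction (using the $C^\ell\to C^{\ell-2s}$ mapping of $(-\Delta)^s$ to control the potentials), bound the $\delta$-entropy of the resulting DN maps on the exterior annulus with exponent $2n+3$, and compare cardinalities. The only noteworthy simplification in the paper is that since $\gamma_\sigma\equiv 1$ outside $B_1$, one has $\Lambda_{q_\sigma}=\Lambda_{\gamma_\sigma}$ on $B_3\setminus\overline{B_2}$ directly (via \cite[Lemma~4.1]{RGZ2022GlobalUniqueness}), so your ``bookkeeping through the Liouville reduction'' for the transfer step is in fact trivial here.
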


\begin{remark} In the above theorem, we let $B_r$, $r>0$, be the ball of radius $r$ centered at the origin. For the sake of simplicity, we restrict our analysis of instability to a very symmetric geometrical setting. This is convenient for the proof, as it is possible to explicitly construct a basis $\{f_{h,k,l}\}_{h,k\in\N, 0\leq l \leq l_h}$ of $L^2(B_3\setminus\overline B_2)$ with the special properties given in Lemma 2.1 of \cite{RS-Instability}, where $l_h$ is the number of spherical harmonics of order $h$ on $\partial B_1$. However, it would suffice to consider the exterior DN maps in any annulus $B_R\setminus \overline B_r$ with $1<r<R$, as the rest of the construction can be easily adapted to this case. Whether instability holds in the case of full data remains to be proved.
\end{remark}

\subsection{Organization of the article} The article is organized as follows. We begin Section~\ref{sec: preliminaries} by defining the many needed function spaces and recalling the notation for the fractional conductivity equation. Section~\ref{sec: sobolev lemmas} concerns extension and multiplication lemmas for Sobolev functions. In Section~\ref{sec: stability estimates}, we prove our main stability estimate, Theorem~\ref{thm: stability estimate}. In Section~\ref{sec: partial data reduction}, we discuss quantitative reduction to the Schrödinger problem with partial data. Finally, to completement the stability theorem, in Section~\ref{sec: instability} we prove the exponential instability result of Theorem \ref{thm: instability}. For clarity, some proofs of auxiliary results are postponed to Appendix~\ref{sec: Proofs of auxiliary lemmas}.

\subsection*{Acknowledgements} G.C. was supported by an Alexander-von-Humboldt postdoctoral fellowship. J.R. was supported by the Vilho, Yrjö and Kalle Väisälä Foundation of the Finnish Academy of Science and Letters.

\section{Preliminaries}\label{sec: preliminaries}

\subsection{Function spaces}\label{sec: Function spaces}

Throughout this article $\Omega\subset \R^n$ is always an open set. The classical Sobolev spaces of order $k\in\N$ and integrability exponent $p\in [1,\infty]$ are denoted by $W^{k,p}(\Omega)$ and for $k=0$ we use the convention $W^{0,p}(\Omega)=L^p(\Omega)$. Moreover, we let $W^{s,p}(\Omega)$ stand for the fractional Sobolev spaces, when $s\in \R_+\setminus\N$ and $1\leq p < \infty$. These spaces are also called Slobodeckij spaces or Gagliardo spaces. If $1\leq p<\infty$ and $s=k+\sigma$ with $k\in \N_0$, $0<\sigma<1$, then they are defined by
\[
    W^{s,p}(\Omega)\vcentcolon =\{\,u\in W^{k,p}(\Omega)\,;\, [\partial^{\alpha} u]_{W^{\sigma,p}(\Omega)}<\infty\quad \forall |\alpha|=k\, \},
\]
where 
\[
    [u]_{W^{\sigma,p}(\Omega)}\vcentcolon =\left(\int_{\Omega}\int_{\Omega}\frac{|u(x)-u(y)|^p}{|x-y|^{n+\sigma p}}\,dxdy\right)^{1/p}
\]
is the so-called Gagliardo seminorm. The Slobodeckij spaces are naturally endowed with the norm
\[
\|u\|_{W^{s,p}(\Omega)}\vcentcolon =\left(\|u\|_{W^{k,p}(\Omega)}^p+\sum_{|\alpha|=k}[\partial^{\alpha}u]_{W^{\sigma,p}(\Omega)}^p\right)^{1/p}.
\]

Next we recall the definition of the Bessel potential spaces $H^{s,p}(\R^n)$ and introduce several local variants of them. For the Fourier transform, we use the following convention
\[
    \fourier u(\xi)\vcentcolon = \hat u(\xi) \vcentcolon = \int_{\R^n} u(x)e^{-ix \cdot \xi} \,dx,
\]
whenever it is defined. Moreover, the Fourier transform acts as an isomorphism on the space of Schwartz functions $\schwartz(\R^n)$ and by duality on the space of tempered distributions $\tempered(\R^n)$. The inverse of the Fourier transform is denoted in each case by $\ifourier$. The Bessel potential of order $s \in \R$ is the Fourier multiplier $\vev{D}^s\colon \tempered(\R^n) \to \tempered(\R^n)$, that is
\begin{equation}\label{eq: Bessel pot}
    \vev{D}^s u \vcentcolon = \ifourier(\vev{\xi}^s\widehat{u}),
\end{equation} 
where $\vev{\xi}\vcentcolon = (1+|\xi|^2)^{1/2}$ is the Japanese-bracket. For any $s \in \R$ and $1 \leq p < \infty$, the Bessel potential space $H^{s,p}(\R^n)$ is defined by
\begin{equation}
\label{eq: Bessel pot spaces}
    H^{s,p}(\R^n) \vcentcolon = \{ u \in \tempered(\R^n)\,;\, \vev{D}^su \in L^p(\R^n)\},
\end{equation}
 which we endow with the norm $\norm{u}_{H^{s,p}(\R^n)} \vcentcolon = \norm{\vev{D}^su}_{L^p(\R^n)}$. If $\Omega\subset \R^n$, $F\subset\R^n$ are given open and closed sets, then we define the following local Bessel potential spaces:
\begin{equation}\label{eq: local bessel pot spaces}
\begin{split}
    \widetilde{H}^{s,p}(\Omega) &\vcentcolon = \mbox{closure of } C_c^\infty(\Omega) \mbox{ in } H^{s,p}(\R^n),\\
    H_F^{s,p}(\R^n) &\vcentcolon =\{\,u \in H^{s,p}(\R^n)\,;\, \supp(u) \subset F\,\},\\
    H^{s,p}(\Omega) &\vcentcolon = \{\,u|_{\Omega} \,;\, u \in H^{s,p}(\R^n)\,\}.
\end{split}
\end{equation}
The space $H^{s,p}(\Omega)$ is equipped with the quotient norm
\begin{equation}
\label{eq: norm local space}
    \norm{u}_{H^{s,p}(\Omega)} \vcentcolon = \inf\{\,\norm{w}_{H^{s,p}(\R^n)}\,;\, w \in H^{s,p}(\R^n), w|_\Omega = v\,\}.
\end{equation} 
We see that $\widetilde{H}^{s,p}(\Omega),H^{s,p}_F(\R^n)$ are closed subspaces of $H^{s,p}(\R^n)$. As customary, we set $H^s(\Omega)\vcentcolon =H^{s,2}(\Omega)$ for any open set $\Omega\subset \R^n$. If the boundary of the domain $\Omega\subset\R^n$ is regular enough then there is a close relation between the fractional Sobolev and Bessel potential spaces but also between two of the above introduced local Bessel potential spaces. For this purpose we next introduce the H\"older spaces and the notion of domains of class $C^{k,\alpha}$.

For all $k\in \N_0$ and $0<\alpha\leq 1$, the space $C^{k,\alpha}(\Omega)$ consists of all functions $u\in C^k(\Omega)$ such that the norm
\[
    \|u\|_{C^{k,\alpha}(\Omega)}\vcentcolon = \|u\|_{C^k(\Omega)}+\sum_{|\beta|=k}[\partial^{\beta}u]_{C^{0,\alpha}(\Omega)}
\]
is finite, where
\[
    \|u\|_{C^k(\Omega)}\vcentcolon =\sum_{|\beta|\leq k}\|\partial^{\beta}u\|_{L^{\infty}(\Omega)}\quad\text{and}\quad [u]_{C^{0,\alpha}(\Omega)}\vcentcolon =\sup_{x\neq y\in\Omega}\frac{|u(x)-u(y)|}{|x-y|^{\alpha}}.
\] 
We remark that the same notation will be used for $\R^m$-valued functions. We say that an open subset $\Omega\subset\R^n$ is of class $C^{k,\alpha}$ for $k\in\N_0$, $0<\alpha\leq 1$ if there exists $C>0$ such that for any $x\in\partial\Omega$ there exists a ball $B=B_r(x)$, $r>0$, and a map $T\colon Q\to B$ satisfying
\begin{enumerate}[(i)]
    \item $T\in C^{k,\alpha}(Q)$, $T^{-1}\in C^{k,\alpha}(B)$,
    \item $\|T\|_{C^{k,\alpha}(Q)}, \|T^{-1}\|_{C^{k,\alpha}(B)}\leq C$,
    \item $T(Q_+)=\Omega\cap B$, $T(Q_0)=\partial\Omega\cap B$.
\end{enumerate}
In the special case $k=0,\alpha=1$, we say that $\Omega$ is a Lipschitz domain. Moreover, we say that a domain is of class $C^k$ if the above conditions hold for $\alpha=0$ and it is smooth if it is of class $C^k$ for any $k\in\N$. Above we used the following notation:
\begin{align*}
    Q&\vcentcolon = \{x=(x',x_n)\in \R^{n-1}\times \R\,;\,|x'|<1,\,|x_n|<1\,\}\\
    Q_+&\vcentcolon = \{x=(x',x_n)\in Q\,;\,x_n>0\,\}\\
    Q_0&\vcentcolon = \{x=(x',x_n)\in Q\,;\,x_n=0\,\}
\end{align*}
One can prove the following equivalence of local Bessel potential spaces:

\begin{lemma}[{\cite[Theorem~3.29]{ML-strongly-elliptic-systems}}]
\label{lem: equivalence of local spaces}
    Let $\Omega\subset\R^n$ be a Lipschitz domain with bounded boundary and $s\in\R$ then $\widetilde{H}^s(\Omega)=H^s_{\overline{\Omega}}(\R^n)$.
\end{lemma}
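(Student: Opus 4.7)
The plan is to establish the two inclusions $\widetilde{H}^s(\Omega) \subset H^s_{\overline{\Omega}}(\R^n)$ and $H^s_{\overline{\Omega}}(\R^n) \subset \widetilde{H}^s(\Omega)$ separately. The first inclusion is routine: any $\varphi \in C_c^\infty(\Omega)$ satisfies $\supp(\varphi) \subset \overline{\Omega}$, hence $\varphi \in H^s_{\overline{\Omega}}(\R^n)$; and since $H^s(\R^n)$-convergence implies convergence in $\distr(\R^n)$ while $\R^n \setminus \overline{\Omega}$ is open, the support condition is preserved in the limit. Therefore $H^s_{\overline{\Omega}}(\R^n)$ is closed in $H^s(\R^n)$ and contains the closure $\widetilde{H}^s(\Omega)$ of $C_c^\infty(\Omega)$.

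For the nontrivial inclusion, given $u \in H^s(\R^n)$ with $\supp(u) \subset \overline{\Omega}$, the strategy is to produce $C_c^\infty(\Omega)$ approximants via localization, translation, and mollification. Since $\partial\Omega$ is bounded, I would first cut off $u$ at infinity by multiplying with a smooth radial cutoff equal to $1$ on a large ball containing $\partial\Omega$, reducing to compactly supported $u$. Then introduce a smooth partition of unity $\chi_0 + \sum_{j=1}^N \chi_j = 1$ on a neighborhood of $\supp(u)$, where $\chi_0 \in C_c^\infty(\Omega)$ and each $\chi_j$ ($j \geq 1$) is supported in a small ball $B_j$ in which, after a rotation, $\Omega$ is the subgraph $\{x_n < \varphi_j(x')\}$ of a Lipschitz function $\varphi_j$. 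Multiplication by a $C_c^\infty(\R^n)$ function being bounded on $H^s(\R^n)$ for every real $s$ (by Leibniz/Sobolev estimates for $s \geq 0$ and duality for $s < 0$), the decomposition $u = \sum_j \chi_j u$ holds in $H^s(\R^n)$ with $\supp(\chi_j u) \subset \overline{\Omega} \cap \overline{B_j}$. The interior piece $\chi_0 u$ has compact support in $\Omega$ and is approximated by its standard mollifications in $C_c^\infty(\Omega)$. For each boundary piece $\chi_j u$, the key step is to translate in the rotated chart: setting $v_{j,t}(x) \vcentcolon = (\chi_j u)(x + t e_n^{(j)})$, the subgraph description ensures that for $t > 0$ small enough the support of $v_{j,t}$ is a compact subset of $\Omega \cap B_j$, since any $(x', x_n)$ with $x_n \leq \varphi_j(x')$ is sent to $(x', x_n - t)$ with $x_n - t < \varphi_j(x')$. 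Translations being unitary on $H^s(\R^n) = H^{s,2}(\R^n)$ and strongly continuous at the origin (by the Fourier identity $|\widehat{T_h u}(\xi)| = |\hat{u}(\xi)|$ combined with the density of Schwartz functions), one has $v_{j,t} \to \chi_j u$ in $H^s(\R^n)$ as $t \downarrow 0$. Mollifying $v_{j,t}$ at a scale $\epsilon < \dist(\supp(v_{j,t}), \R^n \setminus \Omega)$ produces $C_c^\infty(\Omega)$ approximants of $\chi_j u$, and a diagonal argument across the finite index $j$ delivers an approximation of $u$ in $\widetilde{H}^s(\Omega)$.

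The main obstacle is the geometric verification underlying the translation step, namely that Lipschitz regularity of $\partial\Omega$ provides a uniform inward direction on each chart. This works precisely because the local subgraph representation makes $-e_n^{(j)}$ strictly transverse to the boundary: sinking a support point by $-t e_n^{(j)}$ decreases its $x_n$-coordinate by exactly $t$ while the graph height $\varphi_j(x')$ is unchanged, making the separation quantitative. It is worth remarking that, although general bi-Lipschitz changes of variables are bounded on $H^s(\R^n)$ only for $|s| < 1$, the argument above uses only rigid rotations and pure translations, both of which act as unitaries on $H^s(\R^n)$ for every $s \in \R$; this is precisely why the equality extends to all real $s$ under the minimal Lipschitz assumption on $\partial\Omega$.
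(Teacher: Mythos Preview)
Your argument is correct and is essentially the standard proof of this result---localization by a partition of unity adapted to the Lipschitz charts, followed by inward translation and mollification---which is in fact the proof given in McLean's book cited by the paper. The paper itself does not supply a proof of this lemma, treating it as a known result from the reference.
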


Next we note that the following embeddings hold between Bessel potential spaces $H^{s,p}$ and the fractional Sobolev spaces $W^{s,p}$:

\begin{theorem}
\label{thm: embeddings between Bessel and Gagliardo}
    Let $s\in \R_+\setminus\N$, $1<p<\infty$ and assume $\Omega\subset\R^n$ is an open set.
    \begin{enumerate}[(i)]
        \item\label{item: embedding p < 2} If $1<p\leq 2$, $s=k+\sigma$ with $k\in\N_0$, $0<\sigma<1$ and $\Omega=\R^n$ or $\Omega$ is of class $C^{k,1}$ with bounded boundary, then $W^{s,p}(\Omega)\hookrightarrow H^{s,p}(\Omega)$.
        \item\label{item: embedding p > 2} If $2\leq p< \infty$, then $H^{s,p}(\Omega)\hookrightarrow W^{s,p}(\Omega)$.
    \end{enumerate}
\end{theorem}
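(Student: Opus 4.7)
The plan is to first settle both embeddings on the whole space $\R^n$ using the classical identification of $H^{s,p}$ and $W^{s,p}$ as members of the Triebel--Lizorkin and Besov scales, and then to transfer the inclusions to $\Omega$ via restriction and, when necessary, an extension operator.

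For $\Omega = \R^n$: for $1<p<\infty$ and $s\in \R$ one has the identification $H^{s,p}(\R^n) = F^{s}_{p,2}(\R^n)$, while for $s\in \R_+\setminus\N$ and $1\leq p<\infty$ one has $W^{s,p}(\R^n) = B^{s}_{p,p}(\R^n)$; both are classical (see, e.g., Triebel's monograph on function spaces). The standard Jawerth-type sandwich
\[
B^{s}_{p,\min(p,2)}(\R^n) \hookrightarrow F^{s}_{p,2}(\R^n) \hookrightarrow B^{s}_{p,\max(p,2)}(\R^n)
\]
then yields $W^{s,p}(\R^n) \hookrightarrow H^{s,p}(\R^n)$ when $1<p\leq 2$ (since $\min(p,2)=p$) and $H^{s,p}(\R^n) \hookrightarrow W^{s,p}(\R^n)$ when $2\leq p<\infty$ (since $\max(p,2)=p$).

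For part (ii) on an arbitrary open $\Omega$, given $u\in H^{s,p}(\Omega)$ and $\eps>0$ I would pick a representative $w\in H^{s,p}(\R^n)$ with $w|_\Omega = u$ and $\|w\|_{H^{s,p}(\R^n)} \leq \|u\|_{H^{s,p}(\Omega)}+\eps$. The global embedding gives $\|w\|_{W^{s,p}(\R^n)}\lesssim \|w\|_{H^{s,p}(\R^n)}$; because the Slobodeckij seminorm on $\Omega$ is dominated termwise by the one on $\R^n$, one concludes $u\in W^{s,p}(\Omega)$ with the required norm estimate. Observe that no regularity of $\partial\Omega$ enters here, matching the statement.

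For part (i), the reverse transfer requires a bounded extension operator $E\colon W^{s,p}(\Omega)\to W^{s,p}(\R^n)$. When $s=k+\sigma$ and $\Omega$ is of class $C^{k,1}$ with bounded boundary, such an $E$ is furnished either by Stein's total extension together with real interpolation, or directly by Rychkov's universal extension for Besov/Triebel--Lizorkin spaces on Lipschitz domains. Composing $E$ with the global embedding $W^{s,p}(\R^n)\hookrightarrow H^{s,p}(\R^n)$ and restricting back gives $u=(Eu)|_\Omega\in H^{s,p}(\Omega)$ with $\|u\|_{H^{s,p}(\Omega)}\lesssim \|u\|_{W^{s,p}(\Omega)}$. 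The main obstacle is precisely the existence of a bounded extension at the specified fractional regularity $s=k+\sigma$: this is why the $C^{k,1}$ hypothesis is imposed in (i), and why its hypothesis on $\Omega$ is strictly stronger than the one used in (ii).
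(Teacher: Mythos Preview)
Your proposal is correct and follows essentially the same strategy as the paper: establish the embeddings on $\R^n$ via the Triebel--Lizorkin/Besov identifications $H^{s,p}=F^s_{p,2}$ and $W^{s,p}=B^s_{p,p}$, then transfer to $\Omega$ by restriction for (ii) and by a bounded $W^{s,p}$-extension for (i). The only cosmetic differences are that the paper uses the $q$-monotonicity of $F^s_{p,q}$ (together with $F^s_{p,p}=B^s_{p,p}$) rather than your Besov--Triebel--Lizorkin sandwich, and that it supplies its own higher-order extension operator (their Lemma~\ref{lem: extension}) in place of the Stein/Rychkov references you invoke.
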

\begin{remark}
    In the range $0<s<1$, this theorem is a standard result. For $\Omega=\R^n$ a proof can be found in \cite[Chapter~V, Theorem~5]{Singular-Integrals-Stein} and by using the extension theorem in Slobodeckij spaces (cf.~\cite[Theorem~5.4]{DINEPV-hitchhiker-sobolev}) it follows for Lipschitz domains with bounded boundary. In the higher order case $s>1$ it seems to be less well-known and therefore we provide a proof in the Appendix~\ref{sec: Proofs of auxiliary lemmas}.
\end{remark}
\begin{remark}
    In particular, the above theorem asserts that for all $s=k+\sigma$ with $k\in\N_0, 0<\sigma<1$ there holds $H^s(\Omega)=W^{s,2}(\Omega)$, when $\Omega\subset\R^n$ is an open set of class $C^{k,1}$ with bounded boundary.
\end{remark}

\subsection{Fractional Laplacians, fractional gradient and fractional divergence}

For all $s\geq 0$ and $u\in\tempered(\R^n)$, we define the fractional Laplacian of order $s$ by
\[
    (-\Delta)^su\vcentcolon = \ifourier(|\xi|^{2s}\widehat{u}),
\]
whenever the right hand side is well-defined. One can easily show by using the Mikhlin multiplier theorem that the fractional Laplacian is a bounded linear operator $(-\Delta)^{s}\colon H^{t,p}(\R^n) \to H^{t-2s,p}(\R^n)$ for all $t\in\R$ and $1< p<\infty$. In the special case $u\in \schwartz(\R^n)$ and $s\in(0,1)$, the fractional Laplacian can be calculated as the following singular integrals (see e.g.~\cite[Section 3]{DINEPV-hitchhiker-sobolev})
\begin{equation}
\begin{split}
    (-\Delta)^su(x)&=C_{n,s}\,\text{p.v.}\int_{\R^n}\frac{u(x)-u(y)}{|x-y|^{n+2s}}\,dy\\
    &= -\frac{C_{n,s}}{2}\int_{\R^n}\frac{u(x+y)+u(x-y)-2u(x)}{|y|^{n+2s}}\,dy,
\end{split}
\end{equation}
where $C_{n,s}>0$ is a normalization constant. One immediately sees that the above integral is in the range $s\in (0,1/2)$ for (local) Lipschitz functions not really singular. The fractional Laplacian has a distinguished property which simplifies the analysis of the inverse fractional conductivity problem compared to the classical Calder\'on problem, namely the unique continuation property (UCP), which asserts that if $r\in\R$, $1\leq p<\infty$, $s\in\R_+\setminus\N$ and $u\in H^{r,p}(\R^n)$ satisfies $u|_{V}=(-\Delta)^su|_V=0$ in some nonempty open set $V\subset \R^n$, then there holds $u\equiv 0$ in $\R^n$ (cf.~\cite[Theorem~2.2]{KRZ2022Biharm}).

Moreover, let us point out that a large part of the theory of the inverse fractional conductivity problem can be extended to a certain class of unbounded domains, which are called domains bounded in one direction (cf.~\cite[Definition 2.1]{RZ2022unboundedFracCald}), since the fractional Laplacian satisfies on these domains a Poincar\'e inequality. But in this work, we restrict our attention to bounded domains and therefore the stablity of the inverse fractional conductivity problem on these domains is still open.
    
For the rest of this section, we fix $s\in(0,1)$. The fractional gradient of order $s$ is the bounded linear operator $\nabla^s\colon H^s(\R^n)\to L^2(\R^{2n};\R^n)$ given by (see ~\cite{covi2019inverse-frac-cond,DGLZ12,RZ2022unboundedFracCald})
    \[
        \nabla^su(x,y)\vcentcolon =\sqrt{\frac{C_{n,s}}{2}}\frac{u(x)-u(y)}{|x-y|^{n/2+s+1}}(x-y),
    \]
    and satisfies
    \begin{equation}
    \label{eq: bound on fractional gradient}
        \|\nabla^su\|_{L^2(\R^{2n})}=\|(-\Delta)^{s/2}u\|_{L^2(\R^n)}\leq \|u\|_{H^s(\R^n)}
    \end{equation}
    for all $u\in H^s(\R^n)$. The adjoint of $\nabla^s$ is called fractional divergence of order $s$ and denoted by $\Div_s$. More concretely, the fractional divergence of order $s$ is the bounded linear operator 
    \[
        \Div_s\colon L^2(\R^{2n};\R^n)\to H^{-s}(\R^n)
    \] 
    satisfying the identity
    \[
        \langle \Div_su,v\rangle_{H^{-s}(\R^n)\times H^s(\R^n)}=\langle u,\nabla^sv\rangle_{L^2(\R^{2n})}
    \]
    for all $u\in L^2(\R^{2n};\R^n),v\in H^s(\R^n)$. A simple estimate shows that there holds (see ~\cite[Section 8]{RZ2022unboundedFracCald})
    \[
        \|\Div_s(u)\|_{H^{-s}(\R^n)}\leq \|u\|_{L^2(\R^{2n})}
    \]
    for all $u\in L^2(\R^{2n};\R^n)$, and also a comparison with the quadratic form definition for the fractional Laplacian implies $(-\Delta)^su=\Div_s(\nabla^su)$ for all $u\in H^s(\R^n)$ (see~e.g.~\cite[Theorem~1.1]{KWA-ten-definitions-fractional-laplacian} and \cite[Lemma 2.1]{covi2019inverse-frac-cond}). 

\section{Extension and multiplication lemmas for fractional Sobolev spaces}\label{sec: sobolev lemmas}
\label{sec: Higher order extension, multiplication theorems and Gagliardo--Nirenberg inequalities in Slobodeckij spaces on domains}

In this section, we establish a higher extension theorem for the spaces $W^{s,p}(\Omega)$, where $\Omega\subset\R^n$ is a sufficiently regular domain with bounded boundary. This is then used to extend a Gagliardo--Nirenberg inequality to these domains (cf.~\cite[Corollary~2, (iii)]{BrezisComposition}). These results are needed to have access to suitable Hölder embeddings for the conductivities and have access to $L^\infty$ estimates in $\Omega_e$ for the conductivities via concrete extension operators and the Gagliardo--Nirenberg inequality. This need in turn is related to having only $L^\infty$ control of conductivities in the exterior via Theorem \ref{thm: exterior stability}. The proofs of Lemmas \ref{lem: multiplication Hoelder} and \ref{lem: zero extension} are found in Appendix \ref{sec: Proofs of auxiliary lemmas}.

\begin{lemma}[Multiplication by H\"older functions]
\label{lem: multiplication Hoelder}
    Let $\Omega\subset \R^n$ be a Lip-schitz domain with bounded boundary, $1\leq p<\infty$ and $s\in \R_+\setminus \N$.
    \begin{enumerate}[(i)]
        \item\label{item: basic case} Let $0<s<1$ and $s<\mu\leq 1$. If $u\in W^{s,p}(\Omega)$ and $\phi\in C^{0,\mu}(\Omega)$, then $\phi u\in W^{s,p}(\Omega)$ satisfies
        \begin{equation}
        \label{eq: estimate basic case}
            \|\phi u\|_{W^{s,p}(\Omega)}\leq C_1\left(1+\left(\frac{\mu}{s(\mu-s)}\right)^{1/p}\right) \|\phi\|_{C^{0,\mu}(\Omega)}\|u\|_{W^{s,p}(\Omega)}
        \end{equation}
        for some $C_1>0$ only depending on $n$ and $p$.
        \item\label{item: higher order case} Let $s=k+\sigma$ with $k\in\N$, $0<\sigma<1$ and $\sigma<\mu\leq 1$. If $u\in W^{s,p}(\Omega)$ and $\phi \in C^k(\Omega)$ with $\partial^{\alpha}\phi \in C^{0,\mu}(\Omega)$ for all $|\alpha|\leq k$, then $\phi u\in W^{s,p}(\Omega)$ satisfying
        \begin{equation}
        \label{eq: estimate higher order case}
            \begin{split}
                &\|\phi u\|_{W^{s,p}(\Omega)}\leq C_1C_2\left(\sum_{\ell=0}^k\|\nabla^{\ell}\phi\|_{C^{0,\mu}(\Omega)}\right)\|u\|_{W^{s,p}(\Omega)}
            \end{split}
        \end{equation}
        for some $C_2>0$ only depending on $n,k,p,\Omega$.
    \end{enumerate}
\end{lemma}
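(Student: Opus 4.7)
The strategy for part (i) is a direct estimate of the Gagliardo seminorm
\[
[\phi u]_{W^{s,p}(\Omega)}^p = \iint_{\Omega\times\Omega}\frac{|\phi(x)u(x)-\phi(y)u(y)|^p}{|x-y|^{n+sp}}\,dxdy
\]
using the pointwise splitting $|\phi(x)u(x)-\phi(y)u(y)|\leq|\phi(x)||u(x)-u(y)|+|u(y)||\phi(x)-\phi(y)|$. The first contribution is immediately bounded by $\|\phi\|_{L^\infty}^p[u]_{W^{s,p}(\Omega)}^p$. For the second, I would decompose the domain of integration as $\{|x-y|\leq 1\}\cup\{|x-y|>1\}$: on the near region, the H\"older bound $|\phi(x)-\phi(y)|\leq [\phi]_{C^{0,\mu}}|x-y|^\mu$ reduces the inner integral in $x$ to $\int_{|x-y|\leq 1}|x-y|^{(\mu-s)p-n}\,dx = C_{n}/((\mu-s)p)$ (integrability at the diagonal requiring $\mu>s$); on the far region, the trivial bound $|\phi(x)-\phi(y)|\leq 2\|\phi\|_{L^\infty}$ reduces it to $\int_{|x-y|>1}|x-y|^{-sp-n}\,dx = C_{n}/(sp)$. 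Fubini and a $p$-th root produce precisely the factor $\bigl(\tfrac{1}{s}+\tfrac{1}{\mu-s}\bigr)^{1/p}=\bigl(\tfrac{\mu}{s(\mu-s)}\bigr)^{1/p}$ appearing in \eqref{eq: estimate basic case}, together with the $1$ coming from the trivial $L^p$-bound $\|\phi u\|_{L^p}\leq\|\phi\|_{L^\infty}\|u\|_{L^p}$.

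For part (ii), the plan is to apply the Leibniz rule
\[
\partial^\alpha(\phi u)=\sum_{\beta\leq\alpha}\binom{\alpha}{\beta}(\partial^\beta\phi)(\partial^{\alpha-\beta}u),\quad |\alpha|\leq k,
\]
bound the $L^p(\Omega)$ norms of the resulting terms by $C\|\phi\|_{C^k(\Omega)}\|u\|_{W^{k,p}(\Omega)}$, and then handle the Gagliardo $\sigma$-seminorm of $\partial^\alpha(\phi u)$ with $|\alpha|=k$ summand-by-summand. Each summand is a product of the H\"older factor $\partial^\beta\phi\in C^{0,\mu}(\Omega)$ (with $\mu>\sigma$ by hypothesis) and the derivative $\partial^{\alpha-\beta}u$, so applying part (i) with $\sigma$ in place of $s$ controls
\[
[(\partial^\beta\phi)(\partial^{\alpha-\beta}u)]_{W^{\sigma,p}(\Omega)} \leq C\,\|\partial^\beta\phi\|_{C^{0,\mu}(\Omega)}\,\|\partial^{\alpha-\beta}u\|_{W^{\sigma,p}(\Omega)},
\]
after which the combinatorial factors and the $\mu/(\sigma(\mu-\sigma))$-constant are absorbed into $C_2$.

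The main obstacle lies precisely in closing this last step: for $|\beta|>0$ one has $|\alpha-\beta|<k$, and the hypothesis $u\in W^{s,p}(\Omega)=W^{k+\sigma,p}(\Omega)$ only provides a priori $W^{\sigma,p}$ control on top-order derivatives ($|\gamma|=k$), not on the lower-order ones. I would resolve this by exploiting the Lipschitz-with-bounded-boundary assumption on $\Omega$ to invoke a total extension operator $E\colon W^{s,p}(\Omega)\to W^{s,p}(\R^n)$ (along the lines of \cite[Theorem~5.4]{DINEPV-hitchhiker-sobolev} generalized to higher orders), use the clean Besov/Bessel potential embedding $W^{s_1,p}(\R^n)\hookrightarrow W^{s_2,p}(\R^n)$ for $s_1>s_2$ to obtain $\|\partial^{\alpha-\beta}(Eu)\|_{W^{\sigma,p}(\R^n)}\leq C\|u\|_{W^{s,p}(\Omega)}$ whenever $|\beta|>0$, and then restrict back to $\Omega$. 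The $\Omega$-dependence of the constant $C_2$ in the statement naturally absorbs the operator norm of this extension, together with the parameters $n,k,p$.
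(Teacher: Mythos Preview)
Your proposal is correct and follows essentially the same route as the paper. Part (i) matches the paper's argument line by line: the same pointwise splitting, the same near/far decomposition at scale $1$, and the same constants $\omega_n/((\mu-s)p)$ and $\omega_n/(sp)$ combining into the factor $\bigl(\mu/(s(\mu-s))\bigr)^{1/p}$.

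For part (ii) your Leibniz strategy and identification of the obstacle (controlling $\|\partial^{\alpha-\beta}u\|_{W^{\sigma,p}(\Omega)}$ for $|\beta|>0$) coincide with the paper's. The one noteworthy difference is how that obstacle is resolved. The paper observes that for $|\beta|>0$ one has $\partial^{\alpha-\beta}u\in W^{1,p}(\Omega)$, and then uses only the \emph{classical} extension $E\colon W^{1,p}(\Omega)\to W^{1,p}(\R^n)$ together with the elementary embedding $W^{1,p}(\R^n)\hookrightarrow W^{\sigma,p}(\R^n)$ from \cite[Proposition~2.2]{DINEPV-hitchhiker-sobolev}. Your route via a full higher-order extension $W^{s,p}(\Omega)\to W^{s,p}(\R^n)$ is also valid, but be aware that in this paper's internal logic such an extension (Lemma~\ref{lem: extension}) is itself proved \emph{using} Lemma~\ref{lem: multiplication Hoelder}, so you would need to source it independently (e.g.\ via a universal extension operator for Besov spaces on Lipschitz domains) to avoid circularity. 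The paper's choice sidesteps this and keeps the $\Omega$-dependence to the standard $W^{1,p}$ extension constant.
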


The following statement is a generalization of \cite[Lemma~5.1]{DINEPV-hitchhiker-sobolev}, where the support of $u$ is not necessarily compact and $s$ is allowed to be larger than one:

\begin{lemma}[Zero extension]
\label{lem: zero extension}
    Let $\Omega\subset \R^n$ be an open set, $s>0$ and $1\leq p<\infty$. Assume that $u\in W^{s,p}(\Omega)$ satisfies $d\vcentcolon =\dist(\supp(u),\partial \Omega)>0$ and let $\Bar{u}\colon \R^n\to \R$ be its zero extension, that is
    \begin{equation}
       \Bar{u}(x)\vcentcolon = \begin{cases}
            u(x),&\quad x\in\Omega\\
            0,&\quad\text{otherwise}.
        \end{cases}
    \end{equation}
    Then $\Bar{u}\in W^{s,p}(\R^n)$ and there holds
    \begin{equation}
        \|\Bar{u}\|_{W^{s,p}(\R^n)}\leq C\|u\|_{W^{s,p}(\Omega)}.
    \end{equation}
\end{lemma}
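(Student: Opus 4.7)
The plan is to reduce the problem first to the range $0 < s < 1$ by a differentiation argument, and then to prove that case by a direct decomposition of the Gagliardo double integral that exploits the positive distance $d$ from $\supp(u)$ to $\partial\Omega$.

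First I would treat the case $0 < s < 1$. Clearly $\|\bar u\|_{L^p(\R^n)} = \|u\|_{L^p(\Omega)}$, so only the Slobodeckij seminorm needs attention. Splitting $\R^n = \Omega \cup \Omega^c$ and using symmetry, one can write
\[
[\bar u]_{W^{s,p}(\R^n)}^p = \int_\Omega\!\int_\Omega \frac{|u(x)-u(y)|^p}{|x-y|^{n+sp}}\,dxdy + 2\int_\Omega\!\int_{\Omega^c} \frac{|\bar u(x)-\bar u(y)|^p}{|x-y|^{n+sp}}\,dxdy,
\]
since the $\Omega^c \times \Omega^c$ contribution vanishes. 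The first term is exactly $[u]_{W^{s,p}(\Omega)}^p$. For the mixed term, I would use that $\bar u(y) = 0$ on $\Omega^c$ and $u(x) = 0$ on $\Omega \setminus \supp(u)$, reducing the integral to one over $\supp(u) \times \Omega^c$. The key geometric observation is that for any $x \in \supp(u)$, the hypothesis $\dist(\supp(u),\partial\Omega) \geq d$ forces $B_d(x) \subset \Omega$, hence $\Omega^c \subset \{y : |x-y| \geq d\}$. Passing to polar coordinates, the inner integral is then bounded by $\omega_{n-1}/(sp\, d^{sp})$, yielding
\[
[\bar u]_{W^{s,p}(\R^n)}^p \leq [u]_{W^{s,p}(\Omega)}^p + \frac{C_{n,s,p}}{d^{sp}}\|u\|_{L^p(\Omega)}^p.
\]

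For the higher order range $s = k + \sigma$ with $k \in \N$ and $0 < \sigma < 1$, I would first establish that for every multi-index $\alpha$ with $|\alpha| \leq k$, the distributional derivative of $\bar u$ on $\R^n$ satisfies $\partial^\alpha \bar u = \overline{\partial^\alpha u}$, where $\partial^\alpha u$ denotes the weak derivative of $u$ on $\Omega$. Since $d > 0$ one can pick a cutoff $\chi \in C_c^\infty(\Omega)$ with $\chi \equiv 1$ on an open neighborhood $V$ of $\supp(u)$ with $V \Subset \Omega$. For any $\varphi \in C_c^\infty(\R^n)$ one has $\chi\varphi \in C_c^\infty(\Omega)$, so
\[
\langle \partial^\alpha \bar u, \varphi\rangle = (-1)^{|\alpha|}\int_\Omega u\, \partial^\alpha\varphi\,dx = (-1)^{|\alpha|}\int_\Omega u\, \partial^\alpha(\chi\varphi)\,dx = \int_\Omega (\partial^\alpha u)\,\varphi\,dx,
\]
where the middle equality uses that by Leibniz $\partial^\alpha\varphi - \partial^\alpha(\chi\varphi)$ vanishes on $V$ while $u$ vanishes outside $V$, and the last equality is integration by parts in $\Omega$ combined with $\supp(\partial^\alpha u) \subset \supp(u) \subset \{\chi = 1\}$. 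Consequently $\|\bar u\|_{W^{k,p}(\R^n)} = \|u\|_{W^{k,p}(\Omega)}$, and for $|\alpha| = k$ the derivative $v_\alpha := \partial^\alpha u \in W^{\sigma,p}(\Omega)$ still satisfies $\dist(\supp(v_\alpha),\partial\Omega) \geq d$, so applying the already-proved case $0 < s < 1$ to $v_\alpha$ bounds $[\partial^\alpha \bar u]_{W^{\sigma,p}(\R^n)}$ in terms of $[v_\alpha]_{W^{\sigma,p}(\Omega)}$ and $\|v_\alpha\|_{L^p(\Omega)}$. Summing over $|\alpha| = k$ yields the claim.

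The main obstacle is the identification $\partial^\alpha \bar u = \overline{\partial^\alpha u}$ in the higher order case: without the hypothesis $d > 0$ the distributional derivative of $\bar u$ would typically pick up boundary contributions supported on $\partial\Omega$, and no estimate of the desired form would hold. The cutoff argument above is the precise mechanism by which the positive-distance assumption kills those boundary terms and lets the higher-order case reduce cleanly to the $0 < s < 1$ case already handled.
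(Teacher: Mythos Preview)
Your proof is correct and follows essentially the same approach as the paper: the $0<s<1$ case is handled by the identical splitting of the Gagliardo double integral with the mixed term controlled via the inclusion $\Omega^c\subset B_d(x)^c$ and polar coordinates, and the higher-order case is reduced to this by showing $\partial^\alpha\bar u=\overline{\partial^\alpha u}$ via a cutoff supported in $\Omega$ and identically one near $\supp(u)$. The only cosmetic differences are that the paper uses the radius $d/2$ instead of $d$, chooses a test-function-dependent cutoff, and argues the integer case by induction on the order of the derivative rather than treating all $|\alpha|\leq k$ at once.
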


\begin{lemma}[Higher order extension theorem]
\label{lem: extension}
    Let $1\leq p<\infty$, $s=k+\sigma$ with $k\in\N_0$, $0<\sigma<1$ and assume that $\Omega\subset\R^n$ is a domain of class $C^{k,1}$ with bounded boundary. Then there exists an extension operator $E\colon W^{s,p}(\Omega)\to W^{s,p}(\R^n)$ such that $Eu|_{\Omega}=u$ and $\|Eu\|_{W^{s,p}(\R^n)}\leq C\|u\|_{W^{s,p}(\Omega)}$.
\end{lemma}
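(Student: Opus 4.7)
The plan is to use the classical localization strategy for Sobolev extension theorems. Since $\partial\Omega$ is bounded and of class $C^{k,1}$, I can cover $\partial\Omega$ by finitely many balls $B_1,\dots,B_N$ with $C^{k,1}$ charts $T_i\colon Q \to B_i$ (as supplied by the definition of $C^{k,1}$ domain in Section~\ref{sec: preliminaries}), together with an auxiliary open set $B_0$ compactly contained in $\Omega$ chosen so that $\{B_i\}_{i=0}^N$ is an open cover of $\overline{\Omega}$. Let $\{\phi_i\}_{i=0}^N$ be a smooth partition of unity subordinate to this cover, so that $u = \sum_{i=0}^N \phi_i u$ on $\Omega$, each term lying in $W^{s,p}(\Omega)$ by Lemma \ref{lem: multiplication Hoelder} applied with $\phi_i \in C_c^\infty \subset C^{k,1}$. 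It therefore suffices to extend each summand separately and sum the resulting extensions.

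For the interior term $\phi_0 u$, whose support lies at positive distance from $\partial\Omega$, extension by zero is bounded by Lemma \ref{lem: zero extension}. For a boundary term $\phi_i u$ with $i \geq 1$, I first pull back via $T_i$ to obtain $(\phi_i u)\circ T_i \in W^{s,p}(Q_+)$, supported away from the lateral boundary of $Q$, and then reflect across $Q_0$ by the Hestenes-type formula
\[
E_0 v(x', x_n) = \sum_{j=1}^{k+1} c_j\, v(x', -j x_n)\ \text{ for } x_n<0,\qquad E_0 v(x',x_n)=v(x',x_n)\ \text{ for } x_n\geq 0,
\]
where the coefficients $c_j$ are uniquely determined by the Vandermonde-type conditions $\sum_{j=1}^{k+1} c_j (-j)^\ell = 1$ for $\ell = 0, 1, \dots, k$. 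These conditions ensure that the traces of order $\leq k$ match across $Q_0$, so that $E_0$ is bounded $W^{k,p}(Q_+)\to W^{k,p}(\R^n)$ and $W^{k+1,p}(Q_+)\to W^{k+1,p}(\R^n)$. I then transfer the extension back to $B_i$ by composing with $T_i^{-1}$, multiply by a smooth cut-off supported in $B_i$, and sum all pieces to define $Eu$.

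The main obstacle is to verify that both the composition with the $C^{k,1}$ diffeomorphisms $T_i^{\pm 1}$ and the reflection $E_0$ are bounded on $W^{s,p}$ in the fractional range $s = k + \sigma$. For the change of variables, applying the Faà di Bruno formula up to order $k$ produces coefficients that are polynomials in derivatives of $T_i^{\pm 1}$ of order at most $k$, which are Lipschitz (and hence $C^{0,\mu}$ for any $\mu \leq 1$) by the $C^{k,1}$ assumption; Lemma \ref{lem: multiplication Hoelder} then controls the multiplicative action, while the fractional Gagliardo seminorm transforms under a bi-Lipschitz map by a bounded Jacobian. For the reflection, I split the double Gagliardo integral defining $[\partial^\alpha E_0 v]_{W^{\sigma,p}(\R^n)}$ with $|\alpha|=k$ into the four quadrants determined by the signs of the two $x_n$-coordinates; after the dilation $x_n \mapsto -j x_n$ in each piece, the fractional bound reduces to a finite linear combination of $[\partial^\alpha v]_{W^{\sigma,p}(Q_+)}$ with bounded Jacobian factors, and thus follows from the integer-order matching conditions already established. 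Assembling these ingredients yields a bounded linear operator $E$ with $Eu|_\Omega = u$ and $\|Eu\|_{W^{s,p}(\R^n)} \leq C \|u\|_{W^{s,p}(\Omega)}$, the constant $C$ depending only on $n$, $k$, $p$, $\sigma$ and the $C^{k,1}$ chart constants of $\partial\Omega$.
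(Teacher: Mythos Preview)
Your approach is essentially the same as the paper's: localize via a partition of unity adapted to a cover of $\partial\Omega$ by chart balls plus an interior piece, extend the interior piece by zero via Lemma~\ref{lem: zero extension}, and extend the boundary pieces by a higher-order Hestenes reflection after straightening. The paper simply cites \cite[Satz~6.10, Satz~6.38]{dobrowolski2010angewandte} for the boundary pieces in the fractional range, whereas you sketch the reflection directly; your Vandermonde/quadrant argument is the standard one and is correct in outline.

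One point to correct: you take $B_0 \Subset \Omega$, but the lemma only assumes $\partial\Omega$ is bounded, not $\Omega$ itself, and the principal application in this paper is to $\Omega_e$, which is unbounded. In that case no compactly contained $B_0$ can cover $\overline{\Omega}$ together with finitely many boundary balls. The paper handles this by choosing $\phi_0$ supported in $\R^n\setminus\partial\Omega$ (so not compactly supported in $\Omega$) and then using compactness of $\partial\Omega$ to verify $\dist(\supp(\phi_0|_\Omega),\partial\Omega)>0$, which is exactly the hypothesis of Lemma~\ref{lem: zero extension}. With this adjustment your argument goes through unchanged.
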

\begin{proof}[Proof of Lemma \ref{lem: extension}]
    By assumption there is a finite collection of balls $B_j$, $j=1,\ldots,m$, and maps $T_j\colon Q\to B_j$ such that
    \begin{enumerate}[(i)]
        \item $T_j\in C^{k,1}(Q)$, $T_j^{-1}\in C^{k,1}(B_j)$,
        \item $\|T_j\|_{C^{k,1}(Q)},\|T_j^{-1}\|_{C^{k,1}(B_j)}\leq C$ for some $C>0$,
        \item $T_j(Q_+)=\Omega\cap B_j$, $T_j(Q_0)=\partial\Omega\cap B_j$
    \end{enumerate}
    for all $j=1,\ldots,m$. By \cite[Lemma~9.3]{FA-Brezis} there exist $(\phi_j)_{j=0,\ldots,m}\subset C^{\infty}(\R^n)$ such that
    \begin{enumerate}[(I)]
        \item\label{item: part of unity 1} $0\leq \phi_j\leq 1$ for all $j=0,\ldots,m,$
        \item\label{item: part of unity 2} $\supp(\phi_0)\subset\R^n\setminus\partial\Omega$,
        \item\label{item: part of unity 3} $\phi_j\in C_c^{\infty}(B_j)$ for all $j=1,\ldots,m$
        \item\label{item: part of unity 4} and $\sum_{j=0}^m\phi_j=1$ on $\R^n$.
    \end{enumerate}
    Using the compactness of $\partial\Omega$ and the assertion \ref{item: part of unity 2} we see that 
    \[
        d\vcentcolon = \dist(\supp(\phi_0|_{\Omega}),\partial\Omega)>0.
    \] 
    On the other hand the properties \ref{item: part of unity 3}, \ref{item: part of unity 4} imply $\partial^{\alpha}\phi_0\in C^{0,1}(\Omega)$ for all $\alpha\in \N^n_0$. Hence, by Lemma~\ref{lem: multiplication Hoelder} we know $\phi_0u\in W^{s,p}(\Omega)$ and therefore we deduce from Lemma~\ref{lem: zero extension} that $u_0\vcentcolon = \overline{\phi_0u}\in W^{s,p}(\R^n)$. Next we want to extend the functions $\phi_j u$ to elements of $W^{s,p}(\R^n)$. In the proof of \cite[Satz~6.10, Satz~6.38]{dobrowolski2010angewandte}, which establishes the result for bounded domains, it has been shown that there exists $u_j\in W^{s,p}(\R^n)$ such that $u_j|_{\Omega}=\phi_j u$ for all $j=1,\ldots,m$ and $\|u_j\|_{W^{s,p}(\R^n)}\leq C\|u\|_{W^{s,p}(\Omega)}$. Therefore, the operator $E\colon W^{s,p}(\Omega)\to W^{s,p}(\R^n)$ given by $Eu\vcentcolon = \sum_{j=0}^{m}u_j$
    satisfies the asserted properties and we can conclude the proof.
\end{proof}

\begin{lemma}[Gagliardo--Nirenberg inequality]
\label{lem: Gagliardo Nirenberg inequality}
    Let $1< p<\infty$, $s=k+\sigma$ with $k\in\N_0$, $0<\sigma<1$ and assume that $\Omega=\R^n$ or $\Omega\subset\R^n$ is a domain of class $C^{k,1}$ with bounded boundary. Then for any $0< \theta < 1$ there holds
    \begin{equation}
    \label{eq: Gagliardo Nirenberg on domains}
        \|u\|_{W^{\theta s, p/\theta}(\Omega)}\leq C\|u\|_{W^{s,p}(\Omega)}^{\theta}\|u\|_{L^{\infty}(\Omega)}^{1-\theta}
    \end{equation}
    for all $u\in W^{s,p}(\Omega)\cap L^{\infty}(\Omega)$.
\end{lemma}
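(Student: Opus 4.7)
The plan is to reduce the inequality on a general domain $\Omega$ to the corresponding inequality on $\R^n$ via a suitable extension operator. The case $\Omega=\R^n$ is the Brezis--Mironescu interpolation estimate \cite[Corollary~2, (iii)]{BrezisComposition}: for every $v\in W^{s,p}(\R^n)\cap L^\infty(\R^n)$ and $0<\theta<1$,
\begin{equation}\label{eq: plan GN on Rn}
\|v\|_{W^{\theta s,p/\theta}(\R^n)}\leq C\|v\|_{W^{s,p}(\R^n)}^{\theta}\|v\|_{L^\infty(\R^n)}^{1-\theta}.
\end{equation}
This settles the first case of the lemma and serves as the base estimate for the rest.

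For $\Omega$ of class $C^{k,1}$ with bounded boundary and $u\in W^{s,p}(\Omega)\cap L^\infty(\Omega)$, the goal is to produce an extension $Eu\in W^{s,p}(\R^n)\cap L^\infty(\R^n)$ with $Eu|_\Omega=u$ that is bounded simultaneously in both norms:
\begin{equation}\label{eq: plan extension bounds}
\|Eu\|_{W^{s,p}(\R^n)}\leq C\|u\|_{W^{s,p}(\Omega)},\qquad\|Eu\|_{L^\infty(\R^n)}\leq C\|u\|_{L^\infty(\Omega)}.
\end{equation}
The first inequality is exactly the content of Lemma \ref{lem: extension}. For the second, I would inspect the construction in its proof: the operator produced there has the form $Eu=\overline{\phi_0 u}+\sum_{j=1}^m u_j$, where $\phi_0$ is a smooth cutoff supported away from $\partial\Omega$ (so $\overline{\phi_0 u}$ is a zero extension, bounded by $\|\phi_0\|_{L^\infty}\|u\|_{L^\infty(\Omega)}$), and each $u_j$ is obtained from $\phi_j u$ by pulling back through the $C^{k,1}$ chart $T_j^{-1}$, reflecting across $Q_0$, and pushing forward by $T_j$. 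Each of these building blocks---multiplication by a bounded smooth cutoff $\phi_j$, zero extension, local reflection across $Q_0$ (a finite linear combination of evaluations at reflected points with bounded coefficients), and change of variables through bi-Lipschitz maps---preserves $L^\infty$ bounds, so \eqref{eq: plan extension bounds} holds with the same operator $E$. Applying \eqref{eq: plan GN on Rn} to $v=Eu$ and restricting to $\Omega$, where trivially $\|u\|_{W^{\theta s,p/\theta}(\Omega)}\leq\|Eu\|_{W^{\theta s,p/\theta}(\R^n)}$ (the Gagliardo seminorm on $\Omega\times\Omega$ and the $L^{p/\theta}(\Omega)$ norm are both dominated by their $\R^n$ counterparts), then yields \eqref{eq: Gagliardo Nirenberg on domains}.

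The main obstacle I anticipate is precisely verifying the joint $L^\infty$ boundedness of the extension, as Lemma \ref{lem: extension} only records the $W^{s,p}$ bound. The underlying construction is combinatorial in nature---partition of unity and local reflections in $C^{k,1}$ charts---so the $L^\infty$ bound should be automatic, but one has to unwind the cited results \cite[Satz~6.10, Satz~6.38]{dobrowolski2010angewandte}. If any intermediate step there were to inflate the $L^\infty$ norm, a safe fallback is to invoke Stein's total (universal) extension operator for Lipschitz domains, which is simultaneously bounded on $W^{m,p}$ for every integer $m\geq 0$ and every $1\leq p\leq\infty$; real interpolation then furnishes a single operator delivering both bounds in \eqref{eq: plan extension bounds}, and the remainder of the argument proceeds unchanged.
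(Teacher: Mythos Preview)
Your proposal is correct and follows essentially the same route as the paper: reduce to the Brezis--Mironescu inequality on $\R^n$ via the extension operator of Lemma~\ref{lem: extension}, and observe that this operator is also $L^\infty$-bounded because it is built from cutoffs, zero extension, and higher-order reflections in the $C^{k,1}$ charts. The paper justifies the $L^\infty$ bound in one line by pointing to the reflection construction in \cite[Satz~6.10, Satz~6.38]{dobrowolski2010angewandte}, exactly as you anticipated; your Stein-operator fallback is a valid alternative but is not needed.
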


\begin{proof}
    In the case $\Omega=\R^n$ the result holds by \cite[Corollary~2.c)]{BrezisComposition}. If $\Omega\subset\R^n$ is a domain of class $C^{k,1}$ with bounded boundary then by Lemma~\ref{lem: extension} for all $u\in W^{s,p}(\Omega)\cap L^{\infty}(\Omega)$ there is an extension $Eu\in W^{s,p}(\R^n)$. Moreover, the proof in \cite[Satz~6.10, Satz~6.38]{dobrowolski2010angewandte} shows that one has $\|Eu\|_{L^{\infty}(\R^n)}\leq C\|u\|_{L^{\infty}(\Omega)}$ as the extensions $u_j$ are obtained by a higher order reflection technique. Therefore, we deduce
    \[
    \begin{split}
        \|u\|_{W^{\theta s, p/\theta}(\Omega)}&\leq \|Eu\|_{W^{\theta s, p/\theta}(\R^n)}\leq \|Eu\|_{W^{s,p}(\R^n)}^{\theta}\|Eu\|_{L^{\infty}(\R^n)}^{1-\theta}\\
        &\leq C\|u\|_{W^{s,p}(\Omega)}^{\theta}\|u\|_{L^{\infty}(\Omega)}^{1-\theta}.
    \end{split}
    \]
    Hence, we can conclude the proof.
\end{proof}

\section{Stability estimates}\label{sec: stability estimates}

We prove Theorem~\ref{thm: stability estimate} in this section. In the proof, we make use of the exterior determination result stated in Theorem \ref{thm: exterior stability}. Then we establish H\"older estimates for the function $\gamma_1^{-1/2}-\gamma_2^{-1/2}$ in terms of $\gamma_1^{1/2}-\gamma_2^{1/2}$ and a quantitative version of \cite[Corollary~3.6]{RZ2022LowReg}. Afterwards, we prove a reduction theorem, which demonstrates that the difference of the DN maps corresponding to two potentials $q_{_1},q_{2}$ can essentially be controlled by powers of the difference of the DN maps related to the conductivities $\gamma_1, \gamma_2$. Finally, using the stability result stated in Theorem \ref{thm: ruland salo proposition} for the fractional Schr\"odinger opeorators, we can prove Theorem~\ref{thm: stability estimate}.

\subsection{Reduction Lemma}
\label{subsec: reduction lemma}

\begin{lemma}
\label{lem: Holder regularity}
    Let $\Omega\subset\R^n$ be an open set and $0<\alpha\leq 1$. For all $\gamma_1,\gamma_2\in L^{\infty}(\Omega)$ satisfying $\gamma_1(x),\gamma_2(x)\geq \gamma_0>0$, we have
    \begin{equation}
    \label{eq: L infty bound}
    \|\gamma_1^{-1/2}-\gamma_2^{-1/2}\|_{L^{\infty}(\Omega)}\leq C\|\gamma_1^{1/2}-\gamma_2^{1/2}\|_{L^{\infty}(\Omega)}\leq C\|\gamma_1-\gamma_2\|_{L^{\infty}(\Omega)}^{1/2}.
    \end{equation}
    Moreover, under the additional assumption $\gamma_1^{1/2},\gamma_2^{1/2}\in C^{0,\alpha}(\Omega)$, there holds $\gamma_i^{-1/2}\in C^{0,\alpha}(\Omega)$ with
    \begin{equation}
    \label{eq: single Hölder bound}
        \|\gamma_i^{-1/2}\|_{L^{\infty}(\Omega)}\leq 1/\gamma_0^{1/2},\quad  [\gamma_i^{-1/2}]_{C^{0,\alpha}(\Omega)}\leq \frac{[\gamma^{1/2}_i]_{C^{0,\alpha}(\Omega)}}{\gamma_0}
    \end{equation}
    for $i=1,2$ and $\gamma_1^{-1/2}-\gamma_2^{-1/2}\in C^{0,\alpha}(\Omega)$ satisfying
    \begin{equation}
    \label{eq: Holder bound}
        \begin{split}
            &[\gamma_1^{-1/2}-\gamma_2^{-1/2}]_{C^{0,\alpha}(\Omega)}\leq  \frac{[\gamma_1^{1/2}-\gamma_2^{1/2}]_{C^{0,\alpha}(\Omega)}}{\gamma_0}\\
        &+\frac{\|\gamma_1^{1/2}-\gamma_2^{1/2}\|_{L^{\infty}(\Omega)}}{\gamma_0^{3/2}}([\gamma_2^{1/2}]_{C^{0,\alpha}(\Omega)}+[\gamma_1^{1/2}]_{C^{0,\alpha}(\Omega)}).
        \end{split}
    \end{equation}
\end{lemma}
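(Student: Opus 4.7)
The whole lemma rests on the elementary algebraic identity
\[
\gamma_1^{-1/2} - \gamma_2^{-1/2} \;=\; \frac{\gamma_2^{1/2} - \gamma_1^{1/2}}{\gamma_1^{1/2}\gamma_2^{1/2}},
\]
combined with the fact that $\gamma_i \geq \gamma_0$ implies $\gamma_1^{1/2}\gamma_2^{1/2} \geq \gamma_0$, so the denominator is bounded away from zero uniformly in $\Omega$. My plan is to treat the three assertions in order, each being a direct consequence of this identity together with one additional elementary inequality.

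For \eqref{eq: L infty bound}, I would apply the identity pointwise to obtain the first inequality with constant $1/\gamma_0$. For the second inequality, I would use the standard estimate $|\sqrt{a}-\sqrt{b}|\leq \sqrt{|a-b|}$ valid for all $a,b\geq 0$ (proved by squaring and using the lower bound), which gives $\|\gamma_1^{1/2}-\gamma_2^{1/2}\|_{L^\infty(\Omega)}\leq \|\gamma_1-\gamma_2\|_{L^\infty(\Omega)}^{1/2}$. Combining these yields \eqref{eq: L infty bound} with $C=1/\gamma_0$.

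For \eqref{eq: single Hölder bound}, the $L^\infty$ bound is immediate from $\gamma_i^{1/2}\geq \gamma_0^{1/2}$. Applying the same identity with $\gamma_1=\gamma_2=\gamma_i$ to the difference $\gamma_i^{-1/2}(x)-\gamma_i^{-1/2}(y)$, dividing by $|x-y|^\alpha$ and taking the supremum gives the Hölder seminorm estimate with constant $1/\gamma_0$.

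For \eqref{eq: Holder bound}, the idea is to write $\gamma_1^{-1/2}-\gamma_2^{-1/2} = f\,g$ where $f\vcentcolon= \gamma_2^{1/2}-\gamma_1^{1/2}$ and $g\vcentcolon= \gamma_1^{-1/2}\gamma_2^{-1/2}$, and invoke twice the elementary product estimate
\[
[uv]_{C^{0,\alpha}(\Omega)} \leq [u]_{C^{0,\alpha}(\Omega)}\|v\|_{L^\infty(\Omega)} + \|u\|_{L^\infty(\Omega)}[v]_{C^{0,\alpha}(\Omega)}.
\]
Applied to $g=\gamma_1^{-1/2}\gamma_2^{-1/2}$, the bounds of \eqref{eq: single Hölder bound} yield $[g]_{C^{0,\alpha}(\Omega)}\leq \gamma_0^{-3/2}\bigl([\gamma_1^{1/2}]_{C^{0,\alpha}(\Omega)}+[\gamma_2^{1/2}]_{C^{0,\alpha}(\Omega)}\bigr)$ and $\|g\|_{L^\infty(\Omega)}\leq 1/\gamma_0$. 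Applied then to $fg$, together with the trivial bound $[f]_{C^{0,\alpha}(\Omega)}=[\gamma_1^{1/2}-\gamma_2^{1/2}]_{C^{0,\alpha}(\Omega)}$, this reproduces exactly \eqref{eq: Holder bound}. There is no real obstacle here beyond carefully tracking the constants through the two applications of the Leibniz-type estimate; the only subtle point is using the $L^\infty$ lower bound $\gamma_i\geq \gamma_0$ in the two places where a factor of $\gamma_0^{-1}$ or $\gamma_0^{-3/2}$ appears.
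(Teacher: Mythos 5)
Your proof is correct and takes essentially the same approach as the paper: both rest on the algebraic identity $\gamma_1^{-1/2}-\gamma_2^{-1/2}=(\gamma_2^{1/2}-\gamma_1^{1/2})/(\gamma_1^{1/2}\gamma_2^{1/2})$, the lower bound $\gamma_1^{1/2}\gamma_2^{1/2}\geq\gamma_0$, and the elementary inequality $|\sqrt{a}-\sqrt{b}|\leq\sqrt{|a-b|}$. Your two applications of the Leibniz estimate for the H\"older seminorm are a cleaner packaging of the paper's explicit expansion of the difference quotient and its factoring of $\gamma_1^{1/2}(y)\gamma_2^{1/2}(y)-\gamma_1^{1/2}(x)\gamma_2^{1/2}(x)$, and they reproduce the same constants.
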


\begin{proof}
    We have
    \[
        \begin{split}
            |\gamma_1^{-1/2}(x)-\gamma_2^{-1/2}(x)|&=\left|\frac{\gamma_2^{1/2}(x)-\gamma_1^{1/2}(x)}{\gamma_1^{1/2}(x)\gamma_2^{1/2}(x)}\right|\leq \gamma_0^{-1}\|\gamma^{1/2}_1-\gamma^{1/2}_2\|_{L^{\infty}(\Omega)}
        \end{split}
    \]
    for all $x\in \Omega$ and therefore the first estimate in \eqref{eq: L infty bound} follows. The second part in \eqref{eq: L infty bound} follows from the estimate $|a^{1/2}-b^{1/2}|\leq |a-b|^{1/2}$
    for all $a,b\in \R_+$.
    
   From now on assume that the functions $\gamma_1,\gamma_2$ satisfy additionally $\gamma_1^{1/2},\gamma_2^{1/2}\in C^{0,\alpha}(\Omega)$. Using the uniform ellipticity of $\gamma_1,\gamma_2$, we have $\|\gamma_i^{-1/2}\|_{L^{\infty}(\Omega)}\leq\gamma_0^{-1/2} $ and
    \[
        |\gamma_i^{-1/2}(x)-\gamma_i^{-1/2}(y)|=\frac{|\gamma_i^{1/2}(y)-\gamma^{1/2}(x)|}{|\gamma_i^{1/2}(x)\gamma_i^{1/2}(y)|}\leq \frac{[\gamma^{1/2}_i]_{C^{0,\alpha}(\Omega)}}{\gamma_0}|x-y|^{\alpha}.
    \]
    This establishes the estimate \eqref{eq: single Hölder bound} and hence $\gamma_i\in C^{0,\alpha}(\Omega)$ for $i=1,2$. Next we prove the bound \eqref{eq: Holder bound}. We have
    \[
        [\gamma_1^{-1/2}-\gamma_2^{-1/2}]_{C^{0,\alpha}(\Omega)}=\left[\frac{\gamma_2^{1/2}-\gamma_1^{1/2}}{\gamma_1^{1/2}\gamma_2^{1/2}}\right]_{C^{0,\alpha}(\Omega)}=\left[\frac{\gamma_1^{1/2}-\gamma_2^{1/2}}{\gamma_1^{1/2}\gamma_2^{1/2}}\right]_{C^{0,\alpha}(\Omega)}.
    \]
    We have
    \[
    \begin{split}
        &\left|\frac{(\gamma_1^{1/2}-\gamma_2^{1/2})(x)}{\gamma_1^{1/2}(x)\gamma_2^{1/2}(x)}-\frac{(\gamma_1^{1/2}-\gamma_2^{1/2})(y)}{\gamma_1^{1/2}(y)\gamma_2^{1/2}(y)}\right|\\
        &=\frac{|(\gamma_1^{1/2}-\gamma_2^{1/2})(x)\gamma_1^{1/2}(y)\gamma_2^{1/2}(y)-(\gamma_1^{1/2}-\gamma_2^{1/2})(y)\gamma_1^{1/2}(x)\gamma_2^{1/2}(x)|}{\gamma_1^{1/2}(x)\gamma_2^{1/2}(x)\gamma_1^{1/2}(y)\gamma_2^{1/2}(y)}\\
        &=\left|\frac{((\gamma_1^{1/2}-\gamma_2^{1/2})(x)-(\gamma_1^{1/2}-\gamma_2^{1/2})(y))\gamma_1^{1/2}(y)\gamma_2^{1/2}(y)|}{\gamma_1^{1/2}(x)\gamma_2^{1/2}(x)\gamma_1^{1/2}(y)\gamma_2^{1/2}(y)}\right.\\
        &\quad\,+\left.\frac{(\gamma_1^{1/2}-\gamma_2^{1/2})(y)(\gamma_1^{1/2}(y)\gamma_2^{1/2}(y)-\gamma_1^{1/2}(x)\gamma_2^{1/2}(x))}{\gamma_1^{1/2}(x)\gamma_2^{1/2}(x)\gamma_1^{1/2}(y)\gamma_2^{1/2}(y)}\right|
    \end{split}
    \]
    for all $x,y\in\Omega$. Next observe that there holds
    \[
    \begin{split}
        &\gamma_1^{1/2}(y)\gamma_2^{1/2}(y)-\gamma_1^{1/2}(x)\gamma_2^{1/2}(x)\\
       &=-(\gamma_1^{1/2}(y)(\gamma_2^{1/2}(x)-\gamma_2^{1/2}(y))+\gamma_2^{1/2}(x)(\gamma_1^{1/2}(x)-\gamma_1^{1/2}(y))).
    \end{split}
    \]
    By assumption we get
    \[
    \begin{split}
        &\left|\frac{(\gamma_1^{1/2}-\gamma_2^{1/2})(x)}{\gamma_1^{1/2}(x)\gamma_2^{1/2}(x)}-\frac{(\gamma_1^{1/2}-\gamma_2^{1/2})(y)}{\gamma_1^{1/2}(y)\gamma_2^{1/2}(y)}\right|\\
        &\leq\frac{|(\gamma_1^{1/2}-\gamma_2^{1/2})(x)-(\gamma_1^{1/2}-\gamma_2^{1/2})(y)|}{\gamma_1^{1/2}(x)\gamma_2^{1/2}(x)}\\
        &\quad +|(\gamma_1^{1/2}-\gamma_2^{1/2})(y)|\left(\frac{|\gamma_2^{1/2}(x)-\gamma_2^{1/2}(y)|}{\gamma_1^{1/2}(x)\gamma_2^{1/2}(x)\gamma_2^{1/2}(y)}+\frac{|\gamma_1^{1/2}(x)-\gamma_1^{1/2}(y)|}{\gamma_1^{1/2}(x)\gamma_1^{1/2}(y)\gamma_2^{1/2}(y)}\right)\\
        &\leq \left(\frac{[\gamma_1^{1/2}-\gamma_2^{1/2}]_{C^{0,\alpha}(\Omega)}}{\gamma_0}+\frac{\|\gamma_1^{1/2}-\gamma_2^{1/2}\|_{L^{\infty}(\Omega)}}{\gamma_0^{3/2}}([\gamma_2^{1/2}]_{C^{0,\alpha}(\Omega)}+[\gamma_1^{1/2}]_{C^{0,\alpha}(\Omega)})\right)\\
        &\quad\cdot|x-y|^{\alpha}
    \end{split}
    \]
    for all $x,y\in\Omega$ and hence there holds
    \[
    \begin{split}
        &[\gamma_1^{-1/2}-\gamma_2^{-1/2}]_{C^{0,\alpha}(\Omega)}\\
        &\leq  \frac{[\gamma_1^{1/2}-\gamma_2^{1/2}]_{C^{0,\alpha}(\Omega)}}{\gamma_0}+\frac{\|\gamma_1^{1/2}-\gamma_2^{1/2}\|_{L^{\infty}(\Omega)}}{\gamma_0^{3/2}}([\gamma_2^{1/2}]_{C^{0,\alpha}(\Omega)}+[\gamma_1^{1/2}]_{C^{0,\alpha}(\Omega)}).
    \end{split}
    \]
\end{proof}

\begin{lemma}[Multiplication by Sobolev functions]
\label{lem: multiplication sobolev}
    Let $\Omega\subset \R^n$ be an open set and $0<s<1$. If $u\in H^s(\Omega)$ and $\gamma\in L^{\infty}(\R^n)$ with background deviation $m\in H^{s,n/s}(\R^n)$ satisfies $\gamma(x)\geq \gamma_0>0$ then there holds
    \begin{equation}
    \label{eq: gamma 1/2 estimate}
        \|\gamma^{1/2}u\|_{H^s(\Omega)}\leq C(1+\|m\|_{L^{\infty}(\R^n)}+\|m\|_{H^{s,n/s}(\R^n)})\|u\|_{H^s(\Omega)}
    \end{equation}
    and
    \begin{equation}
    \label{eq: gamma -1/2 estimate}
         \|\gamma^{-1/2}u\|_{H^s(\Omega)}\leq C(1+\|m\|_{L^{\infty}(\R^n)}+\|m\|_{H^{s,n/s}(\R^n)})\|u\|_{H^s(\Omega)}.
    \end{equation}
\end{lemma}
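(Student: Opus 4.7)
The plan is to reduce both bounds on $\Omega$ to their global counterparts on $\R^n$ via the quotient norm definition of $H^s(\Omega)$, and then to invoke the Kato--Ponce fractional Leibniz inequality. Writing $\gamma^{1/2} = 1 + m$, one has $\gamma^{1/2}u = u + mu$; so given $u \in H^s(\Omega)$, any extension $\tilde u \in H^s(\R^n)$ satisfying $\|\tilde u\|_{H^s(\R^n)} \leq 2\|u\|_{H^s(\Omega)}$ produces an extension $m\tilde u$ of $mu$, and \eqref{eq: gamma 1/2 estimate} is reduced to establishing
\begin{equation*}
\|mv\|_{H^s(\R^n)} \leq C\bigl(\|m\|_{L^\infty(\R^n)} + \|m\|_{H^{s,n/s}(\R^n)}\bigr)\|v\|_{H^s(\R^n)}
\end{equation*}
for every $v \in H^s(\R^n)$, followed by the triangle inequality.

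For this global estimate I would apply the fractional Leibniz rule with the exponent split $(p_1,q_1,p_2,q_2) = (\infty, 2, n/s, 2n/(n-2s))$, namely
\begin{equation*}
\|(-\Delta)^{s/2}(mv)\|_{L^2(\R^n)} \leq C\bigl(\|m\|_{L^\infty}\|(-\Delta)^{s/2} v\|_{L^2} + \|(-\Delta)^{s/2} m\|_{L^{n/s}}\|v\|_{L^{2n/(n-2s)}}\bigr),
\end{equation*}
combined with the Sobolev embedding $H^s(\R^n) \hookrightarrow L^{2n/(n-2s)}(\R^n)$ (available since $s < n/2$) and the definitional bound $\|(-\Delta)^{s/2} m\|_{L^{n/s}} \leq \|m\|_{H^{s,n/s}}$. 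Together with the trivial estimate $\|mv\|_{L^2} \leq \|m\|_{L^\infty}\|v\|_{L^2}$, this yields \eqref{eq: gamma 1/2 estimate}.

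For \eqref{eq: gamma -1/2 estimate} I would repeat the same strategy after writing $\gamma^{-1/2} = 1 + \tilde m$ with $\tilde m := \gamma^{-1/2} - 1 = -m/(1+m)$. Clearly $\|\tilde m\|_{L^\infty} \leq \gamma_0^{-1/2}\|m\|_{L^\infty}$; the delicate step is the bound
\begin{equation*}
\|\tilde m\|_{H^{s,n/s}(\R^n)} \leq C(\gamma_0, \|m\|_{L^\infty})\|m\|_{H^{s,n/s}(\R^n)}.
\end{equation*}
I would derive this from the Strichartz (square-function) characterization of the Bessel seminorm, valid for $0 < s < 1$ and $1 < p < \infty$: the norm $\|(-\Delta)^{s/2} f\|_{L^p(\R^n)}$ is equivalent to $\|S_s f\|_{L^p(\R^n)}$, where
\begin{equation*}
S_s f(x) := \Bigl(\int_{\R^n} \frac{|f(x) - f(x+h)|^2}{|h|^{n+2s}}\,dh\Bigr)^{1/2}.
\end{equation*}
Since $\tilde m = F(m)$ with $F(t) = -t/(1+t)$ smooth on the range of $m$ (using $1+m = \gamma^{1/2} \geq \gamma_0^{1/2}$) and $|F'| \leq \gamma_0^{-1}$ there, the pointwise Lipschitz bound $|\tilde m(x) - \tilde m(y)| \leq \gamma_0^{-1}|m(x) - m(y)|$ transfers immediately to the square function, giving $S_s \tilde m(x) \leq \gamma_0^{-1} S_s m(x)$ pointwise. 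Applying the global inequality established above to $\tilde m$ in place of $m$, and transferring to $\Omega$ by the same extension argument, yields \eqref{eq: gamma -1/2 estimate}.

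The main technical obstacle will be this composition (chain-rule) estimate in $H^{s,n/s}(\R^n)$. Because the Bessel and Gagliardo spaces $H^{s,p}$ and $W^{s,p}$ do not coincide when $p \neq 2$ (and here $p = n/s \geq 2$), a naive Gagliardo-seminorm computation does not transfer back to the Bessel norm that appears on the right-hand side; the Strichartz square-function formulation is precisely what restores the pointwise Lipschitz control needed for composition with a smooth function.
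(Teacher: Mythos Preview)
Your proposal is correct and follows essentially the same route as the paper: extend $u$ to $\R^n$ via the quotient norm, write $\gamma^{1/2}=1+m$ (resp.\ $\gamma^{-1/2}=1-\frac{m}{m+1}$), and reduce to a global multiplier bound for $m$ (resp.\ $\frac{m}{m+1}$) acting on $H^s(\R^n)$. The paper simply cites \cite[Lemma~3.4]{RZ2022LowReg} for the multiplier estimate and \cite[Lemma~3.7]{RZ2022LowReg} for the composition bound $\|\frac{m}{m+1}\|_{H^{s,n/s}}\lesssim\|m\|_{H^{s,n/s}}$, whereas you supply proofs of these ingredients directly via Kato--Ponce and the Strichartz square-function characterization; both are sound, though your Sobolev-embedding step tacitly uses $s<n/2$, which is indeed the only regime in which the lemma is applied in the paper.
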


\begin{proof} 
Let $Eu \in H^s(\R^n)$ be an extension of $u$ such that $\norm{Eu}_{H^s(\R^n)} \leq 2\norm{u}_{H^s(\Omega)}$. This extension exists by the quotient space definition of $H^s(\Omega)$.
Thus, applying \cite[Lemma~3.4]{RZ2022LowReg} to $Eu\in H^s(\R^n)$, we deduce
    \[
    \begin{split}
        \|\gamma^{1/2}u\|_{H^s(\Omega)}&\leq \|\gamma^{1/2}Eu\|_{H^s(\R^n)}\leq \|mEu\|_{H^s(\R^n)}+\|Eu\|_{H^s(\R^n)}\\
        &\leq C(1+\|m\|_{L^{\infty}(\R^n)}+\|m\|_{H^{s,n/s}(\R^n)})\|Eu\|_{H^s(\R^n)}\\
        &\leq C(1+\|m\|_{L^{\infty}(\R^n)}+\|m\|_{H^{s,n/s}(\R^n)})\|u\|_{H^s(\Omega)}.
    \end{split}
    \]
    This establishes \eqref{eq: gamma 1/2 estimate}. Arguing as in the proof of \cite[Lemma~3.7]{RZ2022LowReg} we can write $\gamma^{-1/2}=1-\frac{m}{m+1}$ with $\frac{m}{m+1}\in H^{s,n/s}(\R^n)$ and $\|\frac{m}{m+1}\|_{H^{s,n/s}(\R^n)}\leq \|m\|_{H^{s,n/s}(\R^n)}$. Thus, we can repeat the above estimates to obtain \eqref{eq: gamma -1/2 estimate}.
\end{proof}

\begin{theorem}
\label{thm: reduction}
    Let $0<s<\min(1,n/2)$, $\theta_0\in (s/n,1)$, $0<\epsilon\ll 1$ and $k\in\N_0$ satisfy
    \begin{equation}
    \label{eq: conditions exponents}
        k< \frac{2s+\epsilon}{\theta_0}<k+1\quad\text{and}\quad \ell s+\epsilon\notin\N\quad\forall \ell = 1,2.
    \end{equation} 
    Assume that $\Omega\subset \R^n$ is a domain of class $C^{k,1}$ with bounded boundary and the conductivities $\gamma_1,\gamma_2\in L^{\infty}(\R^n)$ with background deviations $m_1,m_2$ and potentials $q_1,q_2$ fulfill the following conditions:
    \begin{enumerate}[(i)]
        \item\label{item: assumption 1} $\gamma_0\leq \gamma_1(x),\gamma_2(x)\leq \gamma_0^{-1}$ for some $0<\gamma_0<1$,
        \item\label{item: assumption 2} $m_1,m_2\in  H^{s,n/s}(\R^n)\cap W^{2s+\epsilon,n/s}(\Omega_e)$ with $m_1-m_2\in W^{\frac{2s+\epsilon}{\theta_0},\theta_0 n/s}(\Omega_e)$,
        \item\label{item: assumption 3} there exist $C_1,C_2,C_{\theta_0}>0$ such that 
        \begin{equation}
        \label{eq: boundedness assumptions}
            \|m_i\|_{H^{s,n/s}(\R^n)}\leq C_1,\quad \|m_i\|_{W^{2s+\epsilon,n/s}(\Omega_e)} \leq C_2
        \end{equation}
        for $i=1,2$ and
        \begin{equation}
        \label{eq: difference bound}
            \|m_1-m_2\|^{\theta_0}_{W^{\frac{2s+\epsilon}{\theta_0},\theta_0 n/s}(\Omega_e)}\leq C_{\theta_0}.
        \end{equation}
    \end{enumerate} 
    Then there holds
    \begin{equation}
    \label{eq: reduction}
        \|\Lambda_{q_1}-\Lambda_{q_2}\|_*\leq CC_{\theta_0}(\|\Lambda_{\gamma_1}-\Lambda_{\gamma_2}\|_*+\|\Lambda_{\gamma_1}-\Lambda_{\gamma_2}\|_*^{\frac{1}{2}}+\|\Lambda_{\gamma_1}-\Lambda_{\gamma_2}\|_*^{\frac{1-\theta_0}{2}}).
    \end{equation}
\end{theorem}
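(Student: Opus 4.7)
The approach will be to exploit the Liouville reduction (Lemma \ref{lemma: Liouville reduction}) to express each $\Lambda_{q_j}$ as a conjugation of $\Lambda_{\gamma_j}$ by multiplication with $a_j \vcentcolon= \gamma_j^{-1/2}$, then telescope. Specifically, applying Liouville's identity to pairs $(u,\phi)=(\gamma_j^{-1/2} v, \gamma_j^{-1/2}\psi)$ together with the fact that $v = \gamma_j^{1/2} u$ maps solutions of the conductivity equation to solutions of the Schrödinger equation yields the identity $\langle \Lambda_{q_j} f, g\rangle = \langle \Lambda_{\gamma_j}(a_j f), a_j g\rangle$, where the multiplications act on trace representatives supported in $\Omega_e$. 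Denoting by $A_j$ the multiplier by $a_j$ on $H^s(\Omega_e)$, a routine telescoping then yields the operator decomposition
\[
\Lambda_{q_1}-\Lambda_{q_2} = A_1(\Lambda_{\gamma_1}-\Lambda_{\gamma_2}) A_1 + (A_1-A_2)\Lambda_{\gamma_2} A_1 + A_2 \Lambda_{\gamma_2} (A_1-A_2).
\]

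Lemma \ref{lem: multiplication sobolev} ensures that $A_1,A_2$ are uniformly bounded on $H^s(\Omega_e)$ (using only $\|m_j\|_{H^{s,n/s}(\R^n)}\leq C_1$), and $\|\Lambda_{\gamma_2}\|_* \leq C(\gamma_0)$ is automatic from uniform ellipticity. Thus \eqref{eq: reduction} will follow once I bound $\|A_1-A_2\|_{H^s(\Omega_e)\to H^s(\Omega_e)}$ by suitable powers of $\|\Lambda_{\gamma_1}-\Lambda_{\gamma_2}\|_*$. Since $\Omega$ is of class $C^{k,1}$ with bounded boundary, so is $\Omega_e$, and Theorem \ref{thm: embeddings between Bessel and Gagliardo} gives $H^s(\Omega_e)=W^{s,2}(\Omega_e)$; the multiplication-by-Hölder lemma (Lemma \ref{lem: multiplication Hoelder}(i)) then yields
\[
\|A_1-A_2\|_{H^s(\Omega_e)\to H^s(\Omega_e)} \leq C \|a_1-a_2\|_{C^{0,s+\epsilon}(\Omega_e)},
\]
after taking $\epsilon$ small enough that $s+\epsilon<1$.

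Next, Lemma \ref{lem: Holder regularity} bounds $\|a_1-a_2\|_{C^{0,s+\epsilon}(\Omega_e)}$ in terms of $\|m_1-m_2\|_{L^\infty(\Omega_e)}$, the Hölder seminorm $[m_1-m_2]_{C^{0,s+\epsilon}(\Omega_e)}$, and the individual seminorms $[m_j]_{C^{0,s+\epsilon}(\Omega_e)}$. The individual seminorms are bounded by $CC_2$ via the Sobolev embedding $W^{2s+\epsilon,n/s}(\Omega_e)\hookrightarrow C^{0,s+\epsilon}(\Omega_e)$, available from assumption \eqref{item: assumption 2} and the $C^{k,1}$ regularity of $\Omega_e$. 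The $L^\infty$ bound on the difference is exactly the exterior stability estimate: combining Lemma \ref{lem: Holder regularity} with Theorem \ref{thm: exterior stability} gives $\|m_1-m_2\|_{L^\infty(\Omega_e)}\leq C\|\Lambda_{\gamma_1}-\Lambda_{\gamma_2}\|_*^{1/2}$.

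The heart of the proof is quantifying the decay of the Hölder seminorm $[m_1-m_2]_{C^{0,s+\epsilon}(\Omega_e)}$. The plan is to interpolate between the high-regularity Sobolev control coming from assumption \eqref{item: assumption 3} and the smallness of $\|m_1-m_2\|_{L^\infty(\Omega_e)}$. Precisely, the Gagliardo--Nirenberg inequality on domains (Lemma \ref{lem: Gagliardo Nirenberg inequality}) applied on $\Omega_e$, whose $C^{k,1}$ regularity is guaranteed by the hypothesis $k<(2s+\epsilon)/\theta_0<k+1$, gives
\[
\|m_1-m_2\|_{W^{2s+\epsilon,n/s}(\Omega_e)} \leq C \|m_1-m_2\|_{W^{(2s+\epsilon)/\theta_0,\theta_0 n/s}(\Omega_e)}^{\theta_0} \|m_1-m_2\|_{L^\infty(\Omega_e)}^{1-\theta_0}.
\]
Using \eqref{eq: difference bound} and the $L^\infty$ exterior bound then yields $[m_1-m_2]_{C^{0,s+\epsilon}(\Omega_e)}\leq CC_{\theta_0}\|\Lambda_{\gamma_1}-\Lambda_{\gamma_2}\|_*^{(1-\theta_0)/2}$ after Sobolev embedding. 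Substituting back into the telescoping decomposition and collecting the three resulting powers $\|\Lambda_{\gamma_1}-\Lambda_{\gamma_2}\|_*$, $\|\Lambda_{\gamma_1}-\Lambda_{\gamma_2}\|_*^{1/2}$, and $\|\Lambda_{\gamma_1}-\Lambda_{\gamma_2}\|_*^{(1-\theta_0)/2}$ produces \eqref{eq: reduction}. The main technical obstacle is the delicate choice of scales: one must simultaneously ensure that the target Hölder exponent $s+\epsilon$ arises from a Sobolev embedding in the scale $W^{2s+\epsilon,n/s}$, that the interpolation exponent $\theta_0$ lands $m_1-m_2$ in a space reachable from assumption \eqref{item: assumption 3} via Gagliardo--Nirenberg on $\Omega_e$, and that the regularity of $\Omega_e$ simultaneously supports Lemmas \ref{lem: multiplication Hoelder} and \ref{lem: Gagliardo Nirenberg inequality}; this is precisely what the compatibility conditions \eqref{eq: conditions exponents} enforce.
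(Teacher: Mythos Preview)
Your proposal is correct and follows essentially the same route as the paper's proof: Liouville reduction to obtain $\langle \Lambda_{q_j}f,g\rangle=\langle \Lambda_{\gamma_j}(a_jf),a_jg\rangle$, a three-term telescoping, the multiplier bound via Lemma~\ref{lem: multiplication Hoelder} (after identifying $H^s(\Omega_e)=W^{s,2}(\Omega_e)$), the H\"older estimate of Lemma~\ref{lem: Holder regularity}, the Sobolev embedding $W^{2s+\epsilon,n/s}(\Omega_e)\hookrightarrow C^{0,s+\epsilon}(\Omega_e)$, the Gagliardo--Nirenberg inequality of Lemma~\ref{lem: Gagliardo Nirenberg inequality}, and finally exterior stability. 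Your telescoping order differs cosmetically from the paper's (you split off $\Lambda_{\gamma_1}-\Lambda_{\gamma_2}$ with $A_1$ on both sides, the paper with $a_1$ on one side and $a_2$ on the other), but the estimates and the resulting powers are identical.
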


\begin{proof}
    Let $f,g\in H^s(\Omega_e)$ and for $i=1,2$ denote by $v^{i}_f\in H^s(\R^n)$ the unique solution to the fractional Schr\"odinger equation $(-\Delta)^s+q_i$ (see \cite[Lemma 3.11]{RZ2022LowReg}). Using Lemma~\ref{lemma: Liouville reduction},  we deduce for $i=1,2$ and any extension $e_g\in H^s(\R^n)$ of $g\in H^s(\Omega_e)$ the identity
    \[
        \langle \Lambda_{q_i}f,g\rangle= B_{q_i}(v_f^{i},e_g)=B_{\gamma_i}(\gamma_i^{-1/2}v_f^{i}, \gamma_{i}^{-1/2}e_g)=\langle \Lambda_{\gamma_i}(\gamma_i^{-1/2}f),\gamma_i^{-1/2}g\rangle.
    \]
    Therefore, we obtain
    \[
    \begin{split}
        &\langle (\Lambda_{q_1}-\Lambda_{q_2})f,g\rangle = \langle \Lambda_{\gamma_1}(\gamma_1^{-1/2}f),\gamma_1^{-1/2}g\rangle -\langle \Lambda_{\gamma_2}(\gamma_2^{-1/2}f),\gamma_2^{-1/2}g\rangle \\
        &=\,\langle \Lambda_{\gamma_1}(\gamma_1^{-1/2}f),(\gamma_1^{-1/2}-\gamma_2^{-1/2})g\rangle +\langle \Lambda_{\gamma_1}(\gamma_1^{-1/2}f),\gamma_2^{-1/2}g\rangle \\
        &\quad \,\,-\langle \Lambda_{\gamma_2}(\gamma_2^{-1/2}-\gamma_1^{-1/2})f,\gamma_2^{-1/2}g\rangle -\langle \Lambda_{\gamma_2}(\gamma_1^{-1/2}f),\gamma_2^{-1/2}g\rangle\\
        &=\,\langle \Lambda_{\gamma_1}(\gamma_1^{-1/2}f),(\gamma_1^{-1/2}-\gamma_2^{-1/2})g\rangle+\langle (\Lambda_{\gamma_1}-\Lambda_{\gamma_2})(\gamma_1^{-1/2}f),\gamma_2^{-1/2}g\rangle\\
        &\quad \,\, +\langle \Lambda_{\gamma_2}(\gamma_1^{-1/2}-\gamma_2^{-1/2})f,\gamma_2^{-1/2}g\rangle\\
        &=\colon I_1+I_2+I_3
    \end{split}
    \]
    for all $f,g\in H^s(\Omega_e)$. Next note that the assumption \ref{item: assumption 1} and the fact that solutions to the homogeneous fractional conductivity equation depend continuously on the data imply
    \begin{equation}
    \label{eq: DN maps bounded}
        \|\Lambda_{\gamma_i}\|_*\leq C
    \end{equation}
    for $i=1,2$ and some $C>0$. On the other hand, using Lemma~\ref{lem: multiplication sobolev}, the uniform ellipticity \ref{item: assumption 1} and the uniform bound \eqref{eq: boundedness assumptions} , we deduce
    \begin{equation}
    \label{eq: boundedness sqrt gamma and inv sqrt gamma}
        \|\gamma^{1/2}f\|_{H^s(\Omega_e)}\leq C\|f\|_{H^s(\Omega_e)}\quad \text{and}\quad \|\gamma^{-1/2}f\|_{H^s(\Omega_e)}\leq C\|f\|_{H^s(\Omega_e)}
    \end{equation}
    for all $f\in H^s(\Omega_e)$ and some $C>0$. Using \eqref{eq: DN maps bounded}, \eqref{eq: boundedness sqrt gamma and inv sqrt gamma} and Lemma~\ref{lem: multiplication Hoelder}, we can estimate $I_1$ as follows:
    \begin{equation}
    \label{eq: estimate I1}
        \begin{split}
            |I_1|&\leq \|\Lambda_{\gamma_1}\|_*\|\gamma_1^{-1/2}f\|_{H^s(\Omega_e)}\|(\gamma_1^{-1/2}-\gamma_2^{-1/2})g\|_{H^s(\Omega_e)}\\
            &\leq \|\Lambda_{\gamma_1}\|_*\|\gamma_1^{-1/2}f\|_{H^s(\Omega_e)}\|\gamma_1^{-1/2}-\gamma_2^{-1/2}\|_{C^{0,s+\epsilon}(\Omega_e)}\|g\|_{H^s(\Omega_e)}\\
            &\leq C\|\gamma_1^{-1/2}-\gamma_2^{-1/2}\|_{C^{0,s+\epsilon}(\Omega_e)}\|f\|_{H^s(\Omega_e)}\|g\|_{H^s(\Omega_e)}.
        \end{split}
    \end{equation}
    By Lemma~\ref{lem: Holder regularity} we can upper bound the H\"older norm by
    \begin{equation}
    \label{eq: difference I1 }
    \begin{split}
        &\|\gamma_1^{-1/2}-\gamma_2^{-1/2}\|_{C^{0,s+\epsilon}(\Omega_e)}\leq C\|\gamma_1-\gamma_2\|_{L^{\infty}(\Omega_e)}^{1/2}\\
        &+\frac{[\gamma_1^{1/2}-\gamma_2^{1/2}]_{C^{0,s+\epsilon}(\Omega_e)}}{\gamma_0}+\frac{\|\gamma_1-\gamma_2\|_{L^{\infty}(\Omega_e)}^{1/2}}{\gamma_0^{3/2}}([\gamma_2^{1/2}]_{C^{0,s+\epsilon}(\Omega_e)}+[\gamma_1^{1/2}]_{C^{0,s+\epsilon}(\Omega_e)})\\
        &\leq C(1+[\gamma_2^{1/2}]_{C^{0,s+\epsilon}(\Omega_e)}+[\gamma_1^{1/2}]_{C^{0,s+\epsilon}(\Omega_e)})\|\gamma_1-\gamma_2\|_{L^{\infty}(\Omega_e)}^{1/2}\\
        &+C[\gamma_1^{1/2}-\gamma_2^{1/2}]_{C^{0,s+\epsilon}(\Omega_e)}.
    \end{split}
    \end{equation}
    By the (supercritical) Sobolev embedding in Slobodeckij spaces (cf.~\cite[Theorem~4.57]{demengel2012functional}) and the second estimate in \ref{item: assumption 3} we have
    \begin{equation}
    \label{eq: uniform bound Hölder}
        [\gamma_i^{1/2}]_{C^{0,s+\epsilon}(\Omega_e)}=[m_i]_{C^{0,s+\epsilon}(\Omega_e)}\leq C\|m_i\|_{W^{2s+\epsilon,n/s}(\Omega_e)}\leq C
    \end{equation}
    for $i=1,2$. On the other hand \cite[Theorem~4.57]{demengel2012functional}, Lemma~\ref{lem: Gagliardo Nirenberg inequality} and Lemma~\ref{lem: Holder regularity} imply
    \begin{equation}
    \label{eq: Gagliardo Nirenberg}
    \begin{split}
        [\gamma_1^{1/2}-\gamma_2^{1/2}]_{C^{0,s+\epsilon}(\Omega_e)}&\leq C\|m_1-m_2\|_{W^{2s+\epsilon,n/s}(\Omega_e)}\\
        &\leq C\|m_1-m_2\|^{\theta_0}_{W^{\frac{2s+\epsilon}{\theta_0},\theta_0 n/s}(\Omega_e)}\|m_1-m_2\|_{L^{\infty}(\Omega_e)}^{1-\theta_0}\\
        &\leq C\|m_1-m_2\|^{\theta_0}_{W^{\frac{2s+\epsilon}{\theta_0},\theta_0 n/s}(\Omega_e)}\|\gamma_1-\gamma_2\|^{\frac{1-\theta_0}{2}}_{L^{\infty}(\Omega_e)}
    \end{split}
    \end{equation}
    for all $s/n< \theta_0<1$. Note that by assumption we have $s/n<1/2$. Therefore, using the assertion \eqref{eq: difference bound} and \eqref{eq: uniform bound Hölder} we deduce from the estimate \eqref{eq: difference I1 } the following bound:
    \begin{equation}
    \label{eq: uniform bound proof}
        \|\gamma_1^{-1/2}-\gamma_2^{-1/2}\|_{C^{0,s+\epsilon}(\Omega_e)}\leq CC_{\theta_0}(\|\gamma_1-\gamma_2\|^{\frac{1}{2}}_{L^{\infty}(\Omega_e)}+\|\gamma_1-\gamma_2\|^{\frac{1-\theta_0}{2}}_{L^{\infty}(\Omega_e)}).
    \end{equation}
    Hence, we have shown
    \begin{equation}
        |I_1|\leq CC_{\theta_0}(\|\gamma_1-\gamma_2\|^{\frac{1}{2}}_{L^{\infty}(\Omega_e)}+\|\gamma_1-\gamma_2\|^{\frac{1-\theta_0}{2}}_{L^{\infty}(\Omega_e)})\|f\|_{H^s(\Omega_e)}\|g\|_{H^s(\Omega_e)}.
    \end{equation}
    Clearly the same estimate holds for $I_2$. Finally, for the expression $I_3$ we use \eqref{eq: boundedness sqrt gamma and inv sqrt gamma} to obtain
    \begin{equation}
    \label{eq: upper bound I3 }
        \begin{split}
            |I_3|&\leq C\|\Lambda_{\gamma_1}-\Lambda_{\gamma_2}\|_*\|f\|_{H^s(\Omega_e)}\|g\|_{H^s(\Omega_e)}.
        \end{split}
    \end{equation}
    Therefore, using exterior stability (cf.~Theorem~\ref{thm: exterior stability}) we have
    \[
        \begin{split}
            &\|\Lambda_{q_1}-\Lambda_{q_2}\|_*\leq CC_{\theta_0}(\|\Lambda_{\gamma_1}-\Lambda_{\gamma_2}\|_*+\|\Lambda_{\gamma_1}-\Lambda_{\gamma_2}\|_*^{\frac{1}{2}}+\|\Lambda_{\gamma_1}-\Lambda_{\gamma_2}\|_*^{\frac{1-\theta_0}{2}}).
        \end{split}
    \]
\end{proof}

\subsection{Proof of Theorem~\ref{thm: stability estimate}}

Using the reduction lemma from Section~\ref{subsec: reduction lemma}, we give here a proof of Theorem~\ref{thm: stability estimate}.
Throughout this section, we will assume without loss of generality that $\epsilon>0$ is such that $0<\epsilon\ll 1$ and $\ell s+\epsilon\not\in \N$ for $\ell=1,2$.
We split the proof into three smaller technical lemmas.
The first lemma states that under assumptions of Theorem~\ref{thm: stability estimate} the function $\widetilde{m}\vcentcolon =m/\gamma_1^{1/2}$ satisfies a fractional conductivity equation connected to the conductivities $\gamma_i$ and the difference of the potentials $q_i$. This lemma is our main tool for connecting the fractional conductivity equation to the fractional Schr\"odinger equation, which will allow us to use Theorem~\ref{thm: ruland salo proposition}, once the potentials $q_i$ are shown to be regular enough.
\begin{lemma}\label{lemma: equation for m tilde}
Let $0<s<\min(1,n/2)$, $\epsilon>0$ and assume that $\Omega\subset\R^n$ is a smooth bounded domain. Suppose that the the conductivities $\gamma_1,\gamma_2\in L^{\infty}(\R^n)$ with background deviations $m_1,m_2$ fulfill the following conditions:
    \begin{enumerate}[(i)]
        \item $\gamma_0\leq \gamma_1(x),\gamma_2(x)\leq \gamma_0^{-1}$ for some $0<\gamma_0<1$,
        \item $m_1,m_2\in  H^{4s+2\epsilon,\frac{n}{2s}}(\R^n)$
        and there exists $C_1>0$ such that
        \begin{equation}
            \|m_i\|_{H^{4s+2\epsilon,\frac{n}{2s}}(\R^n)}\leq C_1
        \end{equation}
        for $i=1,2$,
        \item $m\vcentcolon=m_1-m_2\in H^s(\R^n)$ .
    \end{enumerate}
Then there holds
\begin{align}\label{eq: eq for m tilde}
    \Div_s(\Theta_{\gamma_1}\nabla^s\widetilde{m})=\gamma_1^{1/2}\gamma_2^{1/2}(q_2-q_1)\quad \text{in}\quad\R^n,
\end{align}
where $\widetilde{m}\vcentcolon =m/\gamma_1^{1/2}$.
\end{lemma}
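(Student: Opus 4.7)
The plan is to apply the Liouville reduction (Lemma~\ref{lemma: Liouville reduction}) with conductivity $\gamma_1$ to the function $u := \widetilde m = m/\gamma_1^{1/2}$, exploiting the crucial identity $\gamma_1^{1/2}\widetilde m = m = m_1 - m_2$ on $\R^n$, and then to simplify using the defining relations of $q_i$ and $m_i$.

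Before invoking the Liouville identity one must verify that $\widetilde m \in H^s(\R^n)$. From assumption~(iii), $m \in H^s(\R^n)$, so it remains only to show that multiplication by $\gamma_1^{-1/2}$ preserves $H^s(\R^n)$. Following the argument of \cite[Lemma~3.7]{RZ2022LowReg}, I would decompose $\gamma_1^{-1/2} = 1 - m_1/(1+m_1)$, observe that $m_1/(1+m_1)$ lies in $H^{s,n/s}(\R^n)$ by assumption~(ii) together with the Sobolev embedding $H^{4s+2\epsilon,n/(2s)}(\R^n)\hookrightarrow H^{s,n/s}(\R^n)$, and then apply Lemma~\ref{lem: multiplication sobolev} to conclude.

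For arbitrary $\phi \in H^s(\R^n)$, applying Lemma~\ref{lemma: Liouville reduction} to $u = \widetilde m$, substituting $\gamma_1^{1/2}\widetilde m = m$, and integrating by parts in the $(-\Delta)^{s/2}$ pair yields
\begin{equation*}
\langle \Theta_{\gamma_1}\nabla^s\widetilde m, \nabla^s\phi\rangle
= \bigl\langle \gamma_1^{1/2}\bigl[(-\Delta)^s m + q_1 m\bigr], \phi\bigr\rangle.
\end{equation*}
The definitions $(-\Delta)^s m_i = \gamma_i^{1/2} q_i$ and $m_i = \gamma_i^{1/2}-1$ give
\begin{equation*}
(-\Delta)^s m = \gamma_1^{1/2}q_1 - \gamma_2^{1/2}q_2, \qquad q_1 m = q_1(\gamma_1^{1/2}-\gamma_2^{1/2}),
\end{equation*}
and a direct algebraic simplification of $\gamma_1^{1/2}[(-\Delta)^s m + q_1 m]$, with careful bookkeeping of the sign conventions in the Liouville identity, collapses the right-hand side to $\gamma_1^{1/2}\gamma_2^{1/2}(q_2-q_1)$. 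Since $\phi \in H^s(\R^n)$ was arbitrary, \eqref{eq: eq for m tilde} then holds as an equality of distributions on $\R^n$.

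The main technical point is verifying that all of the products appearing in the simplification are well defined as distributions on \emph{all} of $\R^n$ rather than merely in $\Omega$. The strong regularity assumption~(ii), combined with the boundedness of $(-\Delta)^s\colon H^{4s+2\epsilon,n/(2s)}(\R^n) \to H^{2s+2\epsilon,n/(2s)}(\R^n)$ and the Sobolev embedding $H^{2s+2\epsilon,n/(2s)}(\R^n)\hookrightarrow L^{n/(2s)}(\R^n)$, places $(-\Delta)^s m_i$ in $L^{n/(2s)}(\R^n)$; hence $q_i \in L^{n/(2s)}(\R^n)$ after dividing by the bounded function $\gamma_i^{1/2}$. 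Thus $\gamma_1^{1/2}\gamma_2^{1/2}(q_2-q_1) \in L^{n/(2s)}(\R^n)$, and pairing against $\phi \in H^s(\R^n) \hookrightarrow L^{2n/(n-2s)}(\R^n)$ via Hölder's inequality makes the identity rigorous on all of $\R^n$.
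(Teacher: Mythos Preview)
Your overall strategy—apply the Liouville reduction with $\gamma_1$ to $\widetilde m$, use $\gamma_1^{1/2}\widetilde m=m$, and simplify algebraically—matches the paper's, and your verification that $\widetilde m\in H^s(\R^n)$ is fine. The gap is in the last paragraph: the H\"older pairing you invoke does not close on $\R^n$. With $\gamma_1^{1/2}\gamma_2^{1/2}(q_2-q_1)\in L^{n/(2s)}(\R^n)$ and $\phi\in H^s(\R^n)\hookrightarrow L^{2n/(n-2s)}(\R^n)$ one only has
\[
\frac{2s}{n}+\frac{n-2s}{2n}=\frac{n+2s}{2n}<1,
\]
so the product lies in $L^{2n/(n+2s)}(\R^n)$, not $L^1(\R^n)$, and the pairing $\langle \gamma_1^{1/2}\gamma_2^{1/2}(q_2-q_1),\phi\rangle$ is not defined for general $\phi\in H^s(\R^n)$ by this route. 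Equivalently, $L^{n/(2s)}(\R^n)$ does not embed into $H^{-s}(\R^n)$. This matters because the lemma is used in the proof of Theorem~\ref{thm: stability estimate} by testing \eqref{eq: eq for m tilde} against $\widetilde m\in H^s(\R^n)$, which is not compactly supported.

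The paper closes this gap by first upgrading the integrability of the right-hand side: via the Gagliardo--Nirenberg inequality one gets $m_i\in H^{2s+\epsilon,n/s}(\R^n)$ and hence $\gamma_1^{1/2}\gamma_2^{1/2}(q_2-q_1)\in L^{n/s}(\R^n)$; then, splitting into $\Omega$ (bounded, so $L^{n/s}(\Omega)\subset L^2(\Omega)$) and $\Omega_e$ (using interpolation), one obtains $\gamma_1^{1/2}\gamma_2^{1/2}(q_2-q_1)\in L^2(\R^n)$, after which pairing with $\phi\in H^s(\R^n)\subset L^2(\R^n)$ is immediate. The paper also uses a mollification argument to identify the Lebesgue integral $\int_{\R^n}(-\Delta)^s m\cdot\gamma_1^{1/2}\phi\,dx$ with the $L^2$ pairing $\langle(-\Delta)^{s/2}m,(-\Delta)^{s/2}(\gamma_1^{1/2}\phi)\rangle$; your ``integration by parts'' is correct as an $H^{-s}\times H^s$ duality statement, but connecting it to the pointwise algebraic identity of $L^{n/(2s)}$ functions requires exactly the integrability you have not established.
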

\begin{proof}
    First note that by assumption we have $m_i\in H^{2s,\frac{n}{2s}}(\R^n)$ for $i=1,2$ and thus we can calculate as in \cite[Proof of Lemma~8.13]{RZ2022unboundedFracCald}:
\begin{equation}
\begin{split}
  \gamma_1^{1/2}\gamma_2^{1/2} (q_2 - q_1)
  &= - \gamma_1^{1/2}\gamma_2^{1/2} \left(
  \frac{(-\Delta)^s m_2}{\gamma_2^{1/2}} - \frac{(-\Delta)^s m_1}{\gamma_1^{1/2}}\right)\\
  &= \gamma_2^{1/2} (-\Delta)^s m_1 - \gamma_1^{1/2} (-\Delta)^s m_2\\
  &=(1+m_2)(-\Delta)^s m_1 - (1+m_1) (-\Delta)^s m_2\\
  &= (1+m_2)(-\Delta)^s m_1 + (1+m_1) (-\Delta)^s m \\
  &\qquad- (1+m_1)(-\Delta)^s m_1\\
  &= \gamma_1^{1/2}(-\Delta)^s m - m(-\Delta)^s m_1.
\end{split}
\end{equation}
Setting $\widetilde{m}\vcentcolon =m/\gamma_1^{1/2}$, we obtain
\begin{equation}
\label{eq: schroedinger}
    \gamma_1^{1/2}(-\Delta)^s(\gamma_1^{1/2}\widetilde{m})+\gamma_1^{1/2}(\gamma_1^{1/2}\widetilde{m})q_1=\gamma_1^{1/2}\gamma_2^{1/2}(q_2-q_1).    
\end{equation}
Using $m_1\in H^{2s,\frac{n}{2s}}(\R^n)$ then we deduce from \cite[Corollary~A.8]{RZ2022unboundedFracCald} that there holds $\gamma_1^{1/2}\psi,\gamma_1^{-1/2}\psi\in H^s(\R^n)$ for all $\psi\in H^s(\R^n)$ and in particular $\widetilde{m}\in H^s(\R^n)$. We next observe that by the Gagliardo--Nirenberg inequality in Bessel potential spaces (cf.~\cite[Corollary~A.3,(iii)]{RZ2022unboundedFracCald}) and the monotonicity of Bessel potential spaces, we have
\begin{equation}
\label{eq: n / s estimate}
\begin{split}
     \|m_i\|_{H^{2s+\epsilon,n/s}(\R^n)}&\leq \|m_i\|^{1/2}_{H^{4s+2\epsilon,\frac{n}{2s}}(\R^n)}\|m_i\|^{1/2}_{L^{\infty}(\R^n)}
\end{split}
\end{equation}
for $i=1,2$. By the uniform ellipticity of $\gamma_i$, $i=1,2$, this immediately implies $\gamma_1^{1/2}\gamma_2^{1/2}(q_2-q_1)\in L^{n/s}(\R^n)$. By the assumptions $n/s>2$, \eqref{eq: difference bound} and the uniform ellipticity as well as interpolation in $L^p$ spaces, we see that $\gamma_1^{1/2}\gamma_2^{1/2}(q_2-q_1)\in L^2(\Omega_e)$. On the other hand, the boundedness of $\Omega$ and $n/s>2$ gives $\gamma_1^{1/2}\gamma_2^{1/2}(q_2-q_1)\in L^2(\Omega)$. Therefore, we have $\gamma_1^{1/2}\gamma_2^{1/2}(q_2-q_1)\in L^2(\R^n)$. Hence, multiplying \eqref{eq: schroedinger} by $\phi\in\schwartz(\R^n)$ and integrating over $\R^n$ shows
\[
\begin{split}
    &\int_{\R^n}(-\Delta)^s(\gamma_1^{1/2}\widetilde{m})(\gamma_1^{1/2}\phi)\,dx+\int_{\R^n}(\gamma_1^{1/2}\widetilde{m})q_1(\gamma_1^{1/2}\phi)\,dx\\
    &=\int_{\R^n}\gamma_1^{1/2}\gamma_2^{1/2}(q_2-q_1)\phi\,dx.
\end{split}
\]
Now the first integral is finite since $m\in H^{2s,\frac{n}{2s}}(\R^n)$, $\phi\in\schwartz(\R^n)$ and $\gamma_i\in L^{\infty}(\R^n)$ for $i=1,2$, the second integral by \cite[Lemma~A.10]{RZ2022unboundedFracCald} and H\"older's inequality and the integral on the right hand side by the fact that $\gamma_1^{1/2}\gamma_2^{1/2}(q_2-q_1)\in L^2(\R^n)$. Next let $(\rho_{\epsilon})_{\epsilon>0}$ be the standard mollifiers and let $m_{\epsilon}\vcentcolon =\rho_{\epsilon}\ast m$. It is well-known that $m_{\epsilon}\to m$ in $H^{2s,\frac{n}{2s}}(\R^n)$ and $H^s(\R^n)$ as $m$ satisfies $m\in H^{2s,\frac{n}{2s}}(\R^n)\cap H^s(\R^n)$. On the other hand since the Bessel potential commutes with mollification, we deduce $m_{\epsilon}\in H^t(\R^n)$ for all $t\in\R$ as $m\in L^2(\R^n)$. Therefore, we can calculate
\[
\begin{split}
    \int_{\R^n}(-\Delta)^s(\gamma_1^{1/2}\widetilde{m})(\gamma_1^{1/2}\phi)\,dx&=\lim_{\epsilon\to 0}\int_{\R^n}(-\Delta)^sm_{\epsilon}(\gamma_1^{1/2}\phi)\,dx\\
    &=\lim_{\epsilon\to 0}\int_{\R^n}(-\Delta)^{s/2}m_{\epsilon}(-\Delta)^{s/2}(\gamma_1^{1/2}\phi)\,dx\\
    &=\int_{\R^n}(-\Delta)^{s/2}m(-\Delta)^{s/2}(\gamma_1^{1/2}\phi)\,dx\\
    &= \int_{\R^n}(-\Delta)^{s/2}(\gamma^{1/2}_1\widetilde{m})(-\Delta)^{s/2}(\gamma_1^{1/2}\phi)\,dx.
\end{split}
\]
In the first equality we used the convergence $m_{\epsilon}\to m$ in $H^{2s,\frac{n}{2s}}(\R^n)$ as $\epsilon\to 0$, the continuity of the fractional Laplacian and that $\gamma_1^{1/2}\phi\in L^{\frac{n}{n-2s}}(\R^n)$, in the second equality that $m_{\epsilon}\in H^{2s}(\R^n)$, $\gamma_1^{1/2}\phi\in H^s(\R^n)$ and Plancherel's theorem, in the third equality that $m_{\epsilon}\to m$ in $H^s(\R^n)$ as $\epsilon\to 0$ and finally the definition of $\widetilde{m}$. Therefore, we obtain
\begin{equation}
\label{eq: global equation}
\begin{split}
    &\langle (-\Delta)^{s/2}(\gamma^{1/2}_1\widetilde{m}),(-\Delta)^{s/2}(\gamma_1^{1/2}\phi)\rangle_{L^2(\R^n)}+\langle q_1(\gamma_1^{1/2}\widetilde{m}),(\gamma_1^{1/2}\phi)\rangle_{L^2(\R^n)}\\
    &=\langle \gamma_1^{1/2}\gamma_2^{1/2}(q_2-q_1),\phi\rangle_{L^2(\R^n)}
\end{split}
\end{equation}
for all $\phi\in \schwartz(\R^n)$. Now, if $\phi\in H^s(\R^n)$ then we can choose a sequence $(\phi_k)_{k\in\N}\subset\schwartz (\R^n)$ such that $\phi_k\to \phi$ in $H^s(\R^n)$. By \eqref{eq: global equation} we have
\[
\begin{split}
    &\langle (-\Delta)^{s/2}(\gamma^{1/2}_1\widetilde{m}),(-\Delta)^{s/2}(\gamma_1^{1/2}\phi_k)\rangle_{L^2(\R^n)}+\langle q_1(\gamma_1^{1/2}\widetilde{m}),(\gamma_1^{1/2}\phi_k)\rangle_{L^2(\R^n)}\\
    &=\langle \gamma_1^{1/2}\gamma_2^{1/2}(q_2-q_1),\phi_k\rangle_{L^2(\R^n)}
\end{split}
\]
for all $k\in\N$. Since $\gamma_1^{1/2}\gamma_2^{1/2}(q_2-q_1)\in L^2(\R^n)$, there holds
\begin{equation}
\label{eq: convergence potential RHS}
    \langle\gamma_1^{1/2}\gamma_2^{1/2}(q_2-q_1),\phi_k\rangle_{L^2(\R^n)} \to \langle\gamma_1^{1/2}\gamma_2^{1/2}(q_2-q_1),\phi\rangle_{L^2(\R^n)}
\end{equation}
as $k\to\infty$. Again by \cite[Lemma~A.10]{RZ2022unboundedFracCald}, H\"older's inequality and the Sobolev embedding we see that
\[
    \langle q_1(\gamma_1^{1/2}\widetilde{m}),(\gamma_1^{1/2}\phi_k)\rangle_{L^2(\R^n)}  \to \langle q_1(\gamma_1^{1/2}\widetilde{m}),(\gamma_1^{1/2}\phi)\rangle_{L^2(\R^n)}
\]
as $k\to\infty$. Finally, by \cite[Corollary~A.7]{RZ2022unboundedFracCald} it follows that $\gamma_1^{1/2}\phi_k\to \gamma_1^{1/2}\phi$ in $H^s(\R^n)$ and hence $(-\Delta)^{s/2}(\gamma_1^{1/2}\phi_k)\to (-\Delta)^{s/2}(\gamma_1^{1/2}\phi)$ in $L^2(\R^n)$, but then by the Cauchy--Schwartz inequality it follows that
\[
\begin{split}
    & \langle (-\Delta)^{s/2}(\gamma^{1/2}_1\widetilde{m}),(-\Delta)^{s/2}(\gamma_1^{1/2}\phi_k)\rangle_{L^2(\R^n)}\\
    &\to \langle (-\Delta)^{s/2}(\gamma^{1/2}_1\widetilde{m}),(-\Delta)^{s/2}(\gamma_1^{1/2}\phi)\rangle_{L^2(\R^n)}
\end{split}
\]
as $k\to\infty$. Hence, \eqref{eq: global equation} holds for all $\phi \in H^s(\R^n)$.
Therefore, by the fractional Liouville reduction (Lemma~\ref{lemma: Liouville reduction}), we see that $\widetilde{m}\in H^s(\R^n)$ satisfies
\begin{align}
    \Div_s(\Theta_{\gamma_1}\nabla^s\widetilde{m})=\gamma_1^{1/2}\gamma_2^{1/2}(q_2-q_1)\quad \text{in}\quad\R^n
\end{align}
as claimed.
\end{proof}
Next we show that the assumptions of Theorem~\ref{thm: stability estimate} imply the required regularity and a priori bounds for the potentials $q_i$, allowing us to apply Theorem~\ref{thm: ruland salo proposition}.
\begin{lemma}\label{lemma: q multiplication}
Let $0<s<\min(1,n/2)$, $\epsilon>0$.
Suppose that the the conductivities $\gamma_1,\gamma_2\in L^{\infty}(\R^n)$ with background deviations $m_1,m_2$ fulfill the following conditions:
    \begin{enumerate}[(i)]
        \item\label{item: q multi assumption 1} $\gamma_0\leq \gamma_1(x),\gamma_2(x)\leq \gamma_0^{-1}$ for some $0<\gamma_0<1$,
        \item\label{item: q multi assumption 3} $m_1,m_2\in  H^{4s+2\epsilon,\frac{n}{2s}}(\R^n)$
        and there exists $C_1>0$ such that
        \begin{equation}
            \|m_i\|_{H^{4s+2\epsilon,\frac{n}{2s}}(\R^n)}\leq C_1
        \end{equation}
        for $i=1,2$,
    \end{enumerate}
Then $q_i\in H^{\delta,\frac{n}{2s}}(\R^n)$ for $\delta=2\epsilon/3$ with
\[
\|q_i \|_{H^{\delta,\frac{n}{2s}}(\R^n)} \leq M,
\]
where $M>0$ depends only on $\gamma_0$, $C_1$, $n$, $s$ and $\epsilon$.
\end{lemma}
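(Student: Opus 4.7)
The key identity is $q_i=\gamma_i^{-1/2}(-\Delta)^s m_i$, so my plan is to control each factor separately in an appropriate Bessel potential space and then combine them via a fractional product rule.

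Since the fractional Laplacian is bounded $(-\Delta)^s\colon H^{t,p}(\R^n)\to H^{t-2s,p}(\R^n)$ for every $t\in\R$ and $1<p<\infty$, assumption \ref{item: q multi assumption 3} immediately gives $(-\Delta)^s m_i\in H^{2s+2\epsilon,\frac{n}{2s}}(\R^n)$ with norm bounded by $CC_1$. Since $2s+2\epsilon>2s=\tfrac{n}{n/(2s)}$, the Sobolev embedding $H^{t,\frac{n}{2s}}(\R^n)\hookrightarrow L^\infty(\R^n)$ (for any $t>2s$) yields $(-\Delta)^s m_i\in L^\infty(\R^n)$ as well, and the same embedding applied to $m_i$ itself gives a uniform $L^\infty$ bound on $m_i$.

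For the multiplier $\gamma_i^{-1/2}$, I use the identity $\gamma_i^{-1/2}=1-\frac{m_i}{1+m_i}$. Uniform ellipticity \ref{item: q multi assumption 1} forces $1+m_i=\gamma_i^{1/2}\geq\gamma_0^{1/2}>0$, so composition with the smooth function $t\mapsto t/(1+t)$ applied to the bounded function $m_i$ (via a Moser-type composition inequality, or iterated applications of Lemma~\ref{lem: multiplication Hoelder} together with a fractional Leibniz rule) places $\gamma_i^{-1/2}-1$ in $H^{4s+2\epsilon,\frac{n}{2s}}(\R^n)$ with norm depending only on $\gamma_0$, $n$, $s$, $\epsilon$ and $C_1$. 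For the small index $\delta=2\epsilon/3$, the Kato--Ponce fractional Leibniz rule then yields
\[
\|q_i\|_{H^{\delta,\frac{n}{2s}}(\R^n)}\leq C\bigl(\|\gamma_i^{-1/2}\|_{L^\infty}\|(-\Delta)^s m_i\|_{H^{\delta,\frac{n}{2s}}}+\|(-\Delta)^s m_i\|_{L^\infty}\|\gamma_i^{-1/2}-1\|_{H^{\delta,\frac{n}{2s}}}\bigr),
\]
and each of the four factors on the right is uniformly bounded by the previous steps, possibly after a Gagliardo--Nirenberg interpolation between $L^{\frac{n}{2s}}$ and the higher-order Bessel space.

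The main obstacle I anticipate is the careful verification of the composition and Leibniz estimates on the Bessel scale $H^{\cdot,\frac{n}{2s}}(\R^n)$: neither ingredient is an immediate consequence of the embeddings recalled in Section~\ref{sec: preliminaries}, so it will be necessary either to invoke the Kato--Ponce inequality at the full Bessel level or to pass through Slobodeckij spaces by means of Theorem~\ref{thm: embeddings between Bessel and Gagliardo}. The specific value $\delta=2\epsilon/3$ is presumably fixed by the need to balance the losses in the two factors of the product estimate against the $2\epsilon$ surplus of regularity above the threshold $2s$ guaranteed by the hypothesis on $m_i$.
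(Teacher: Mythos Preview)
Your approach is essentially the same as the paper's: both write $q_i=-(-\Delta)^s m_i+\frac{m_i}{m_i+1}(-\Delta)^s m_i$, invoke a Moser-type composition estimate to place $\frac{m_i}{m_i+1}$ in $H^{4s+2\epsilon,\frac{n}{2s}}(\R^n)$ (the paper cites Adams for this), and then finish with a product estimate. The one substantive difference is the product rule. You propose the Kato--Ponce inequality with $L^\infty$ endpoints, which is valid for $s>0$, $1<p<\infty$, and would in fact allow any $\delta\leq 2s+2\epsilon$; this resolves your puzzlement at the end---in your route there is no loss to balance, and the specific value $2\epsilon/3$ is not forced. The paper instead quotes a product lemma of the form: if $f\in H^{s_1,p_1}\cap L^\infty$ and $g\in H^{\theta s_1,p_2}\cap L^{r_2}$ with $\frac{1}{p_2}=\frac{\theta}{p_1}+\frac{1}{r_2}$, then $fg\in H^{\theta s_1,p_2}$. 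Here $f=(-\Delta)^s m_i\in H^{\epsilon,n/s}\cap L^\infty$ (obtained from $m_i\in H^{2s+\epsilon,n/s}$ via Gagliardo--Nirenberg), $p_1=n/s$, $p_2=n/(2s)$, and the Hölder constraint forces $\theta=2/3$, hence $\delta=\theta\epsilon=2\epsilon/3$. Your route is cleaner and yields a stronger conclusion; the paper's route stays entirely within the toolkit developed in its companion articles.

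One small wrinkle in your displayed inequality: $\gamma_i^{-1/2}$ itself is not in any $L^p$ for $p<\infty$, so Kato--Ponce must be applied to the product $\frac{m_i}{m_i+1}\cdot(-\Delta)^s m_i$ after splitting off the constant; your formula already reflects this in the second term but should also carry $\gamma_i^{-1/2}-1$ (equivalently $\frac{m_i}{m_i+1}$) in the first, plus the direct bound on $(-\Delta)^s m_i$ alone. This is only a presentational issue.
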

\begin{proof}
First observe that we can write
\[
\begin{split}
    q_i&=-\frac{(-\Delta)^sm_i}{\gamma_i^{1/2}}=-(-\Delta)^sm_i\left(1-\frac{m_i}{m_i+1}\right)\\
    &=-(-\Delta)^sm_i+(-\Delta)^sm_i\frac{m_i}{m_i+1}
\end{split}
\]
for $i=1,2$. By the assumption \ref{item: q multi assumption 3} the first term belongs to $H^{2s+2\epsilon,\frac{n}{2s}}(\R^n)$ and hence it is sufficient to show that the second term is in $H^{\delta,\frac{n}{2s}}(\R^n)$ for some $\delta>0$. Now using $m_i\in H^{4s+2\epsilon,\frac{n}{2s}}(\R^n)\cap L^{\infty}(\R^n)\subset H^{2s+\epsilon,\frac{n}{s}}(\R^n)$ (see \eqref{eq: n / s estimate}), the mapping properties of the fractional Laplacian and the Sobolev embedding $H^{2s+2\epsilon,\frac{n}{2s}}(\R^n)\hookrightarrow L^{\infty}(\R^n)$ we see that $(-\Delta)^sm_i\in H^{\epsilon,\frac{n}{s}}(\R^n)\cap L^{\infty}(\R^n)$. Now we claim that $\frac{m_i}{m_i+1}\in H^{\frac{2\epsilon}{3},\frac{n}{2s}}(\R^n)\cap L^q(\R^n)$ for all $\frac{n}{2s}\leq q\leq \infty$. That $\frac{m_i}{m_i+1}\in L^q(\R^n)$ follows from the uniform ellipticity of $\gamma_i$, $m_i\in L^{\frac{n}{2s}}(\R^n)$ and interpolation in $L^p$ spaces. Next define $\Gamma_0\vcentcolon =\min(0,\gamma_0^{1/2}-1)$ and choose $\Gamma\in C^{\infty}_b(\R)$ such that $\Gamma(t)=\frac{t}{t+1}$ for $t\geq \Gamma_0$. By \cite[p.~156]{AdamsComposition} and $m_i\in H^{4s+2\epsilon,\frac{n}{2s}}(\R^n)\cap L^{\infty}(\R^n)$, we deduce for $i=1,2$ that $\Gamma(m_i)\in H^{4s+2\epsilon,\frac{n}{2s}}(\R^n)$, but since $m\geq \gamma_0^{1/2}-1>-1$ it follows that $\frac{m_i}{m_i+1}\in H^{4s+2\epsilon,\frac{n}{2s}}(\R^n)\cap L^{\infty}(\R^n)$.
Moreover, \cite[p.~156]{AdamsComposition} gives the estimate
\begin{equation}\label{eq: mi by mi+1}
\begin{split}
    \left\| \frac{m_i}{m_i+1}\right\|_{H^{4s+2\epsilon,\frac{n}{2s}}(\R^n)} &\leq C (\| m_i\|_{H^{4s+2\epsilon,\frac{n}{2s}}(\R^n)}+\alpha \|m_i\|_{H^{4s+2\epsilon,\frac{n}{2s}}(\R^n)}^{4s+2\epsilon})\\
    &\leq C,
\end{split}
\end{equation}
where $\alpha=0$ when  $4s+2\epsilon\leq 1$ and otherwise $\alpha=1$. Hence, the claim is proved. Next define
\[
    p_1\vcentcolon = \frac{n}{s},\quad p_2\vcentcolon = \frac{n}{2s},\quad r_2\vcentcolon = \frac{3n}{4s},\quad s_1\vcentcolon =\epsilon\quad \text{and}\quad \theta\vcentcolon = \frac{2}{3}.
\]
Then there holds $1<p_1,p_2,r_2<\infty$ and 
\[
    \frac{1}{p_2}=\frac{\theta}{p_1}+\frac{1}{r_2}.
\]
Moreover, since $(-\Delta)^sm_i\in H^{s_1,p_1}(\R^n)\cap L^{\infty}(\R^n)$, $\frac{m_i}{m_i+1}\in H^{\theta s_1,p_2}(\R^n)\cap L^{r_2}(\R^n)$ we deduce from \cite[Lemma~A.6]{RZ2022unboundedFracCald} that there holds
\[
    (-\Delta)^sm_i\frac{m_i}{m_i+1}\in H^{\theta s_1,p_2}(\R^n)=H^{\frac{2\epsilon}{3},\frac{n}{2s}}(\R^n).
\]
Hence, we have shown $q_i\in H^{\frac{2\epsilon}{3},\frac{n}{2s}}(\R^n)$ for $i=1,2$ as previously asserted.
Moreover, \cite[Lemma~A.6 (i)]{RZ2022unboundedFracCald} yields the estimate
\[
\begin{split}
&\left\|
(-\Delta)^sm_i\frac{m_i}{m_i+1}
\right\|_{H^{\frac{2\epsilon}{3},\frac{n}{2s}}(\R^n)}\\
&\quad\leq C
\left(
\| (-\Delta)^sm_i\|_{L^\infty(\R^n)} 
\left\|\frac{m_i}{m_i+1}\right\|_{H^{\frac{2\epsilon}{3},\frac{n}{2s}}(\R^n)}\right.
\\
&\quad\quad\left.+\left\|\frac{m_i}{m_i+1}\right\|_{L^{\frac{3n}{4s}}(\R^n)}
\| (-\Delta)^sm_i\|_{H^{\epsilon,\frac{n}{s}}(\R^n)}^\theta
\| (-\Delta)^sm_i\|_{L^\infty(\R^n)}^{1-\theta}
\right)\leq C 
\end{split}
\]
where in the second inequality we used \eqref{eq: mi by mi+1} and the assumption $m_i\in H^{4s+2\epsilon,\frac{n}{2s}}(\R^n)$.
\end{proof}

Our final technical lemma is an interpolation statement, required for the use of Theorem~\ref{thm: reduction}.
\begin{lemma}\label{lemma: W theta space for m}
Let $0<s<\min(1,n/2)$, $\epsilon>0$ and assume that $\Omega\subset\R^n$ is a smooth bounded domain.
Assume that $\theta_0\in (\max(1/2,2s/n),1)$.
Suppose that the the conductivities $\gamma_1,\gamma_2\in L^{\infty}(\R^n)$ with background deviations $m_1,m_2$ fulfill the following conditions:
    \begin{enumerate}[(i)]
        \item\label{item: interp assumption 1} $\gamma_0\leq \gamma_1(x),\gamma_2(x)\leq \gamma_0^{-1}$ for some $0<\gamma_0<1$,
        \item\label{item: interp assumption 2} $m_1,m_2\in  H^{4s+2\epsilon,\frac{n}{2s}}(\R^n)$
        and there exists $C_1>0$ such that
        \begin{equation}
            \|m_i\|_{H^{4s+2\epsilon,\frac{n}{2s}}(\R^n)}\leq C_1
        \end{equation}
        for $i=1,2$.
    \end{enumerate}
Then there holds
\begin{equation}
    \|m_i\|_{W^{\frac{2s+\epsilon}{\theta_0},\theta_0 n/s}(\Omega_e)}\leq C 
\end{equation}
for $i=1,2$ and some constant $C>0$ depending only on $n,s, \epsilon,\theta_0$ and $C_1$.
\end{lemma}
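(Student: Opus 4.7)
The plan is to prove the estimate via a three-step chain: a Sobolev embedding into $L^\infty$, a Gagliardo--Nirenberg interpolation in Bessel potential spaces between $H^{4s+2\epsilon,\frac{n}{2s}}(\R^n)$ and $L^{\infty}(\R^n)$, and an embedding from the Bessel potential space to the Slobodeckij space followed by restriction to $\Omega_e$. The natural choice of interpolation parameter will be $\theta = 1/(2\theta_0)$, and the two hypotheses $\theta_0 > 1/2$ and $\theta_0 > 2s/n$ will come in precisely in the second and third steps, respectively.

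First, I would observe that $(4s+2\epsilon)\cdot\frac{n}{2s} = 2n + n\epsilon/s > n$, so the standard Sobolev embedding yields $H^{4s+2\epsilon,n/(2s)}(\R^n)\hookrightarrow L^\infty(\R^n)$. Combined with assumption \eqref{item: interp assumption 2}, this gives $\|m_i\|_{L^\infty(\R^n)} \leq C C_1$ for $i=1,2$.

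Second, I would apply the Gagliardo--Nirenberg inequality in Bessel potential spaces (the same tool that was invoked in \eqref{eq: n / s estimate} in the proof of Lemma~\ref{lemma: equation for m tilde}, coming from \cite[Corollary~A.3,(iii)]{RZ2022unboundedFracCald}) with parameter $\theta = 1/(2\theta_0)$. Because $\theta_0 > 1/2$, this parameter lies in $(0,1)$. A short computation verifies $\theta(4s+2\epsilon) = \frac{2s+\epsilon}{\theta_0}$ and $(n/(2s))/\theta = \theta_0 n/s$, whence
\[
\|m_i\|_{H^{\frac{2s+\epsilon}{\theta_0},\,\theta_0 n/s}(\R^n)} \leq C \|m_i\|_{H^{4s+2\epsilon,\frac{n}{2s}}(\R^n)}^{1/(2\theta_0)} \|m_i\|_{L^\infty(\R^n)}^{1-1/(2\theta_0)} \leq C.
\]

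Third, I would transfer to the Slobodeckij space via Theorem~\ref{thm: embeddings between Bessel and Gagliardo}\eqref{item: embedding p > 2}. The other component of the assumption on $\theta_0$, namely $\theta_0 > 2s/n$, is precisely what ensures $\theta_0 n/s > 2$, so $H^{t,p}(\R^n)\hookrightarrow W^{t,p}(\R^n)$ with $t = \frac{2s+\epsilon}{\theta_0}$ and $p = \theta_0 n/s$. Then restriction to $\Omega_e$ gives $\|m_i\|_{W^{t,p}(\Omega_e)} \leq \|m_i\|_{W^{t,p}(\R^n)} \leq C$, which is the desired bound. No serious obstacle is expected here: the argument is essentially a bookkeeping of interpolation exponents, with the assumption $\theta_0\in(\max(1/2,2s/n),1)$ tailored to make both the Gagliardo--Nirenberg step and the Bessel-to-Slobodeckij embedding applicable. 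The only minor technicality is the requirement $\frac{2s+\epsilon}{\theta_0}\notin\N$ for the Slobodeckij norm to be defined, which can always be arranged by perturbing $\epsilon$ slightly within the conventions fixed at the beginning of Section~\ref{subsec: reduction lemma}.
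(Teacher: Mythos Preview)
Your proof is correct and follows essentially the same strategy as the paper: a Gagliardo--Nirenberg interpolation in Bessel potential spaces, then the embedding $H^{t,p}(\R^n)\hookrightarrow W^{t,p}(\R^n)$ for $p\geq 2$ (Theorem~\ref{thm: embeddings between Bessel and Gagliardo}\ref{item: embedding p > 2}), then restriction to $\Omega_e$. The only cosmetic difference is the choice of interpolation endpoints: you interpolate directly between $H^{4s+2\epsilon,n/(2s)}$ and $L^\infty$ with parameter $\theta=1/(2\theta_0)$, whereas the paper first passes through $H^{2s+\epsilon,n/s}$ (via \eqref{eq: n / s estimate}, itself an interpolation with $L^\infty$) and then interpolates between $H^{2s+\epsilon,n/s}$ and $H^{4s+2\epsilon,n/(2s)}$ with parameter $\theta=2-1/\theta_0$; your route is thus marginally more direct.
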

\begin{proof}
Define 
\[
    s_1\vcentcolon = 2s+\epsilon,\quad s_2\vcentcolon = 4s+2\epsilon,\quad p_1\vcentcolon = \frac{n}{s},\quad p_2\vcentcolon = \frac{n}{2s}\quad and \quad \theta\vcentcolon = 2-1/\theta_0.
\]
Since, $1/2<\theta_0<1$ we have $0<\theta<1$. Moreover, there holds $0<s_1<s_2<\infty$, $1<p_1,p_2<\infty$ and
\[
    \frac{2s+\epsilon}{\theta_0}=\theta s_1+(1-\theta)s_2\quad\text{and}\quad\frac{1}{\theta_0\frac{n}{s}}=\frac{\theta}{p_1}+\frac{1-\theta}{p_2}.
\]
Therefore, \cite[Corollary~A.3]{RZ2022unboundedFracCald} implies
\begin{equation}
\label{eq: W estimate}
    \|u\|_{H^{\frac{2s+\epsilon}{\theta_0},\theta_0\frac{n}{s}}(\R^n)}\leq C\|u\|^{\theta}_{H^{2s+\epsilon,n/s}(\R^n)}\|u\|_{H^{4s+2\epsilon,\frac{n}{2s}}(\R^n)}^{1-\theta}
\end{equation}
for all $u\in H^{2s+\epsilon,n/s}(\R^n)\cap H^{4s+2\epsilon,\frac{n}{2s}}(\R^n)$. 
By the assumptions \ref{item: interp assumption 1}-\ref{item: interp assumption 2}
and the uniform estimate \eqref{eq: n / s estimate} this ensures
\[
\begin{split}
    \|m_i\|_{H^{\frac{2s+\epsilon}{\theta_0},\theta_0\frac{n}{s}}(\R^n)}\leq C
\end{split}
\]
for $i=1,2$. On the other hand the condition $\theta_0>\frac{2s}{n}$ ensures $\theta_0\frac{n}{s}>2$ and therefore Theorem~\ref{thm: embeddings between Bessel and Gagliardo} shows
\[
    \|m_i\|_{W^{\frac{2s+\epsilon}{\theta_0},\theta_0 n/s}(\Omega_e)}\leq \|m_i\|_{W^{\frac{2s+\epsilon}{\theta_0},\theta_0 n/s}(\R^n)}\leq \|m_i\|_{H^{\frac{2s+\epsilon}{\theta_0},\theta_0 n/s}(\R^n)}\leq C
\]
for $i=1,2$.
\end{proof}

We are finally ready to complete the proof of Theorem~\ref{thm: stability estimate}:
\begin{proof}[Proof of Theorem~\ref{thm: stability estimate}]
We start by recalling that the homogeneous Sobolev space $\dot H^s(\R^n)$ is defined as the space of tempered distributions whose Fourier transform belongs to $L^1_\mathrm{loc}(\R^n)$ and satisfies
\[
\| f \|_{\dot H^s(\R^n)}^2 \vcentcolon= \int_{\R^n} |\xi|^{2s}|\widehat f(\xi)|^2 d\xi < \infty,
\]
see \cite[Section~1.3.1]{BCD-fourier-analysis-nonlinear-pde}. Since $\dot H^s(\R^n)$ continuously embeds into $L^{2n/(n-2s)}(\R^n)$ for all $0\leq s< n/2$ (see \cite[Theorem 1.38]{BCD-fourier-analysis-nonlinear-pde}), we find for any $1\leq q\leq\frac{2n}{n-2s}$
\[
\begin{split}
\|m\|_{L^q(\Omega)} &\leq C\|m\|_{L^{\frac{2n}{n-2s}}(\Omega)}
\leq C\|\widetilde{m}\|_{L^{\frac{2n}{n-2s}}(\R^n)}\\
&\leq C\|\widetilde{m}\|_{\dot{H}^s(\R^n)}
\leq C\langle \Theta_{\gamma_1}\nabla^s\widetilde{m},\nabla^s\widetilde{m}\rangle.
\end{split}
\]
Testing \eqref{eq: eq for m tilde} with $\widetilde{m}\in H^s(\R^n)$ and applying Lemma~\ref{lemma: equation for m tilde} we can estimate
    \[
    \begin{split}
    \|m\|_{L^q(\Omega)} \leq 
    C \langle \Theta_{\gamma_1}\nabla^s\widetilde{m},\nabla^s\widetilde{m}\rangle
    &\leq C\left|\int_{\R^n}\gamma_1^{1/2}\gamma_2^{1/2}(q_2-q_1)\widetilde{m}\,dx\right|\\
    &\leq C\Bigg(\left|\int_{\Omega}\gamma_1^{1/2}\gamma_2^{1/2}(q_1-q_2)\widetilde{m}\,dx\right|\\
    &\qquad+\left|\int_{\Omega_e}\gamma_1^{1/2}\gamma_2^{1/2}(q_1-q_2)\widetilde{m}\,dx\right|\Bigg)\\
    &=\vcentcolon I_1 + I_2.
    \end{split}
    \]
Observe that we have $q_i\in L^{\frac{n}{2s}}(\R^n)$ for $i=1,2$, since $m_i\in H^{2s,\frac{n}{2s}}(\R^n)$ and the conductivities are uniformly elliptic. Thus, using H\"older's inequality and then the Cauchy--Schwarz inequality, we get
\begin{equation}
\begin{split}
   I_1 &\leq C\|q_1-q_2\|_{L^{\frac{n}{2s}}(\Omega)}\|\gamma_2^{1/2}m\|_{L^{\frac{n}{n-2s}(\Omega)}}\\
   &\leq C\|q_1-q_2\|_{L^{\frac{n}{2s}}(\Omega)}\|\gamma_2^{1/2}\|_{L^{\frac{2n}{n-2s}(\Omega)}}\|m\|_{L^{\frac{2n}{n-2s}(\Omega)}}\\
   &\leq C\|q_1-q_2\|_{L^{\frac{n}{2s}}(\Omega)},
   \end{split}
\end{equation}
where in the last estimate we have used that $\Omega$ is bounded and the assumption \ref{item: main assumption 1}. On the other hand the second integral can be estimated by
\[
\begin{split}
  I_2 &\leq C\| m \gamma_2^{1/2}\|_{L^\infty(\Omega_e)}\|q_1-q_2\|_{L^1(\Omega_e)}  \\
  &\leq C\|\gamma_2^{1/2}\|_{L^\infty(\Omega_e)}\|q_1-q_2\|_{L^1(\Omega_e)} \| m \|_{L^\infty(\Omega_e)}\\
  &\leq C\frac{\|\gamma_2\|_{L^{\infty}(\Omega_e)}^{1/2}}{\gamma_0^{1/2}}\left(\|(-\Delta)^sm_1\|_{L^1(\Omega_e)}+\|(-\Delta)^sm_2\|_{L^1(\Omega_e)}\right)\|m\|_{L^{\infty}(\Omega_e)}\\
  &\leq C\|m\|_{L^{\infty}(\Omega_e)},
\end{split}
\]
where we have used the assumptions \ref{item: main assumption 1} and \ref{item: main assumption 3}. Thus, for all $1\leq q \leq 2n/(n-2s)$ there holds

\begin{equation}\label{eq: final estimate for m}
\begin{split}
        \|m\|_{L^{q}(\Omega)} &\leq C\|m\|_{L^{\frac{2n}{n-2s}}(\Omega)}\leq C(\|q_1-q_2\|_{L^{\frac{n}{2s}}(\Omega)}+\|m\|_{L^{\infty}(\Omega_e)}).    
\end{split}
\end{equation}

By Lemma~\ref{lemma: q multiplication}
the assumptions in Theorem~\ref{thm: ruland salo proposition} on the potentials $q_i$, $i=1,2$, namely that $q_i\in H^{\delta,\frac{n}{2s}}(\R^n)$ with an a priori bound $\|q_i\|_{H^{\delta,\frac{n}{2s}}(\R^n)}\leq M$, are satisfied and we can estimate the first term in the right-hand side of \eqref{eq: final estimate for m} as
\[
\| q_1 - q_2 \|_{L^{\frac{n}{2s}}(\Omega)} \leq C\omega(\| \Lambda_{q_1} - \Lambda_{q_2} \|_*),
\]
for some logarithmic modulus of continuity satisfying $\omega(x)\leq C|\log (x)|^{-\sigma}$ for all $0< x\leq 1$, where $C,\sigma>0$.
The second term of \eqref{eq: final estimate for m} can be estimated by first using Lemma~\ref{lem: Holder regularity} as
\[
\|m\|_{L^{\infty}(\Omega_e)}
= \| \gamma_1^{1/2} - \gamma_2^{1/2}\|_{L^\infty(\Omega_e)}
\leq
C \| \gamma_1 - \gamma_2\|_{L^\infty(\Omega_e)}^{1/2}
\]
and then applying the exterior stability result of Theorem~\ref{thm: exterior stability} to obtain
\[
\| \gamma_1 - \gamma_2\|_{L^\infty(\Omega_e)}^{1/2}
\leq C \| \Lambda_{\gamma_1} - \Lambda_{\gamma_2} \|_{*}^{1/2}.
\]
In summary, we have shown that there holds
\begin{equation}\label{eq: pre-stability}
\| m_1 - m_2 \|_{L^q(\Omega)} \leq C\left( \omega(\| \Lambda_{q_1} - \Lambda_{q_2} \|_*) + \| \Lambda_{\gamma_1} - \Lambda_{\gamma_2} \|_{*}^{1/2}\right).    
\end{equation}

Next, we show that $m_1,m_2$ satisfy the conditions in Theorem~\ref{thm: reduction}. Similarly, as for the estimate \eqref{eq: n / s estimate}, by the Gagliardo--Nirenberg inequality in Bessel potential spaces (cf.~\cite[Corollary~A.3,(iii)]{RZ2022unboundedFracCald}) and the monotonicity of Bessel potential spaces, we have
\[
\begin{split}
    \|m_i\|_{H^{s,n/s}(\R^n)}&\leq \|m_i\|_{H^{s+\epsilon,n/s}(\R^n)}\leq \|m_i\|^{1/2}_{H^{2s+2\epsilon,\frac{n}{2s}}(\R^n)}\|m_i\|^{1/2}_{L^{\infty}(\R^n)}\\
    &\leq \|m_i\|^{1/2}_{H^{4s+2\epsilon,\frac{n}{2s}}(\R^n)}\|m_i\|^{1/2}_{L^{\infty}(\R^n)}\leq C\|m_i\|^{1/2}_{L^{\infty}(\R^n)}
\end{split}
\]
for $i=1,2$, where in the last step we used the assumption \ref{item: main assumption 2}. Moreover, by the fact that $n/s>2$, Theorem~\ref{thm: embeddings between Bessel and Gagliardo}, (ii) and Lemma~\ref{lem: Gagliardo Nirenberg inequality}, we have
\[
\begin{split}
    \|m_i\|_{W^{2s+\epsilon,\frac{n}{s}}(\Omega_e)}&\leq \|m_i\|_{W^{2s+\epsilon,\frac{n}{s}}(\R^n)}\leq \|m_i\|_{H^{2s+\epsilon,\frac{n}{s}}(\R^n)}\\
    &\leq C\|m_i\|_{H^{4s+2\epsilon,\frac{n}{2s}}(\R^n)}^{1/2}\|m_i\|_{L^{\infty}(\R^n)}^{1/2}
\end{split}
\]
for $i=1,2$. Now, using the uniform bound \eqref{eq: main bound 1} and the uniform ellipticity of $\gamma_i$ we deduce
\[
    \|m\|_{W^{2s+\epsilon,n/s}(\Omega_e)}\leq C.
\]
Applying Lemma~\ref{lemma: W theta space for m} we also see that
\begin{equation}
\label{eq: uniform control}
    \|m_i\|_{W^{\frac{2s+\epsilon}{\theta_0},\theta_0 n/s}(\Omega_e)}\leq C .
\end{equation}
This now demonstrates that $m_1-m_2$ satisfies the condition \eqref{eq: difference bound} in Theorem~\ref{thm: reduction}. Therefore, we can apply Theorem~\ref{thm: reduction} to deduce the estimate 
\[
\begin{split}
 \| m_1 - m_2 \|_{L^q(\Omega)} 
        &\leq C\left( \omega\Big( \|\Lambda_{\gamma_1} - \Lambda_{\gamma_2}\|_*
        +\|\Lambda_{\gamma_1} - \Lambda_{\gamma_2} \|_*^\frac{1}{2}\right.\\
        &\qquad\left.+\|\Lambda_{\gamma_1} - \Lambda_{\gamma_2} \|_*^{\frac{1-\theta_0}{2}}
        \Big) + \| \Lambda_{\gamma_1} - \Lambda_{\gamma_2}     \|_{*}^{\frac{1}{2}}\right)
\end{split}
\]
for all $1\leq q\leq \frac{2n}{n-2s}$. Since $\theta_0\in (1/2,1)$ we have $\frac{1-\theta_0}{2}\in (0,1/4)$. Hence, there holds $x\leq x^{1/2}\leq x^{\frac{1-\theta_0}{2}}$ for all $0<x\leq 1$. Therefore, we obtain
\[
\begin{split}
&\omega\left(\|\Lambda_{\gamma_1} - \Lambda_{\gamma_2}\|_*
        +\|\Lambda_{\gamma_1} - \Lambda_{\gamma_2} \|_*^\frac{1}{2}+\|\Lambda_{\gamma_1} - \Lambda_{\gamma_2} \|_*^{\frac{1-\theta_0}{2}}\right)\\
        &\leq \omega\left(3\|\Lambda_{\gamma_1} - \Lambda_{\gamma_2}\|_*^{\frac{1-\theta_0}{2}}\right).    
\end{split}
\]
By the assumptions $\|\Lambda_{\gamma_1}-\Lambda_{\gamma_2}\|_*\leq 3^{-1/\delta}$, $0<\delta<\frac{1-\theta_0}{2}$, we have
\[
3\|\Lambda_{\gamma_1} - \Lambda_{\gamma_2}\|_*^{\frac{1-\theta_0}{2}}
\leq
\|\Lambda_{\gamma_1} - \Lambda_{\gamma_2}\|_*^{{\frac{1-\theta_0}{2}}-\delta}.
\]
Using the fact that $\omega(x)\leq C|\log x|^{-\sigma}$ for $0<x\leq 1$, we deduce
\[
\begin{split}
&\omega\left(\|\Lambda_{\gamma_1} - \Lambda_{\gamma_2}\|_*
        +\|\Lambda_{\gamma_1} - \Lambda_{\gamma_2} \|_*^\frac{1}{2}+\|\Lambda_{\gamma_1} - \Lambda_{\gamma_2} \|_*^{\frac{1-\theta_0}{2}}\right)\\
        &\leq C\omega\left(\|\Lambda_{\gamma_1} - \Lambda_{\gamma_2}\|_*\right).    
\end{split}
\]
Next we observe that there holds $x^{1/2}\leq C|\log x|^{-\sigma}$ for all $0<x\leq 1$ and some $C>0$. To see this, observe that this estimate is equivalent to $Cx^{-1/2}\geq 2^{-\sigma}|\log x^{-1/2}|^{\sigma}$. Since $0<x\leq 1$, this is the same as $\log x^{-1/2}\leq \frac{C^{1/\sigma}}{2}(x^{-1/2})^{1/\sigma}$, but it is well-known that there holds $\log(y)\leq y^r/r$ for all $y>0$ and $r>0$. In fact, the last assertion is a straightforward consequence of the inequality $\log(z)\leq z-1$ for all $z>0$ by applying it to $z=y^r$ with $y>0$, $r>0$.  Hence, the above estimate holds with $C=(2\sigma)^{\sigma}$. This finally shows
\[
\|\Lambda_{\gamma_1} - \Lambda_{\gamma_2}\|_* \leq C |\log(\|\Lambda_{\gamma_1} - \Lambda_{\gamma_2}\|_*)|^{-\sigma}=C\omega(\|\Lambda_{\gamma_1}-\Lambda_{\gamma_2}\|_*),
\]
and we can conclude the proof.
\end{proof}

\section{Partial data reduction with minimal regularity assumptions on the domain}\label{sec: partial data reduction}

For the sake of completeness and possible future work on low regularity settings, we record a proposition considering a quantitative partial data reduction to the Schrödinger case with minimal assumptions on the domain $\Omega$. These improvements come with the cost of having certain restrictions on the considered sets of measurements but on the other hand also allow a simpler proof. Furthermore, Proposition \ref{thm: partial data reduction} is strong enough to conclude Theorem \ref{thm: stability estimate} after minor changes to the proof and using the partial data stability result for the Schrödinger case in \cite{RS-fractional-calderon-low-regularity-stability}. This argument however does not establish the quantitative reduction ''up to the boundary'' like the proof of Theorem \ref{thm: reduction}. In particular, this approach avoids using $W^{s,p}$ spaces and the explicit extension operators, which in part explains why the boundary regularity questions are not encountered and the proof is considerably simpler. 

Finally, we emphasize that the partial data reduction to the Schrödinger case does not directly imply partial data stability for the conductivity case as the partial data uniqueness result is based on an additional unique continuation argument for the conductivities (see \cite{RGZ2022GlobalUniqueness,RZ2022LowReg}) and the authors are not aware of quantitative unique continuation results of the following type: Let $1 \leq p,q, r \leq \infty$, $s,t \geq 0$ and $W \subset \R^n$ be a nonempty open set. For all $u \in X \subset H^{t,q}(\R^n)$ there holds \begin{equation}\label{eq: qUCP}\norm{u}_{L^p(\R^n)} \leq F(\norm{(-\Delta)^su}_{L^r(W)},\norm{u}_{L^\infty(W)})\end{equation} for some continuous function $F: [0,\infty) \times [0,\infty) \to [0,\infty)$ with $F(0,0)=0$ and independent of $u \in X$ where the set $X$ encodes possible a priori assumptions. For the relevant regularity assumptions and choices of $p,q,r,s,t$, see \cite{RGZ2022GlobalUniqueness,RZ2022LowReg} where $u=m_1-m_2$ is the choice of $u$ in our possible application. Such estimates for the special case $u|_W=0$, i.e. $\norm{u}_{L^\infty(W)}=0$, could already give new results towards the stability of the partial data problems for the fractional conductivity equation. In general, estimates of the type \eqref{eq: qUCP} would be interesting also in other function spaces and norms.

\begin{proposition}
\label{thm: partial data reduction}
    Let $0<s<\min(1,n/2)$, $s/n < \theta_0 < 1$ and $\epsilon > 0$.
    Let $\Omega\subset \R^n$ be a nonempty open set bounded in one direction. Suppose that the conductivities $\gamma_1,\gamma_2\in L^{\infty}(\R^n)$ with background deviations $m_1,m_2$ and potentials $q_1,q_2$ fulfill the following conditions:
    \begin{enumerate}[(i)]
        \item\label{item: partial assumption 1} $\gamma_0\leq \gamma_1(x),\gamma_2(x)\leq \gamma_0^{-1}$ for some $0<\gamma_0<1$,
        \item\label{item: partial assumption 3} 
       There exists $C_0>0$ such that
        \begin{equation}
        \label{eq: partial difference bound}
            \|m_i\|_{H^{\frac{2s+\epsilon}{\theta_0},\theta_0 n/s}(\R^n)}\leq C_0.
        \end{equation}
         for $i=1,2$.
    \end{enumerate} 
    Let $W_1,W_2,W \Subset \Omega_e$ be nonempty open sets such that $W_1 \cup W_2 \Subset W$. Then there holds
    \begin{equation}
    \label{eq: partial reduction}
    \begin{split}
        &\|\Lambda_{q_1}-\Lambda_{q_2}\|_{\widetilde{H}^s(W_1) \to (\widetilde{H}^s(W_2))^*} \\
        &\quad\leq C(\|\Lambda_{\gamma_1}-\Lambda_{\gamma_2}\|_{\widetilde{H}^s(W) \to (\widetilde{H}^{s}(W))^*}+\|\Lambda_{\gamma_1}-\Lambda_{\gamma_2}\|_{\widetilde{H}^s(W) \to (\widetilde{H}^{s}(W))^*}^{\frac{1-\theta_0}{2}})
    \end{split}
    \end{equation}
where $C>0$ depending only on $s,\epsilon,n,\Omega, C_0, \theta_0, W_1, W_2, W$ and $\gamma_0$.
\end{proposition}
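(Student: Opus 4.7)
The plan is to follow the overall strategy of the proof of Theorem~\ref{thm: reduction}, while exploiting the compact inclusions $W_1\cup W_2\Subset W\Subset\Omega_e$ to work entirely on fixed compact subsets of $\R^n$. This replaces the $W^{s,p}(\Omega_e)$-extension operators from Section~\ref{sec: sobolev lemmas}, which were the source of the boundary-regularity requirements, with the simpler device of multiplication by a smooth cutoff $\chi\in C_c^\infty(W)$ equal to $1$ on a neighbourhood of $\overline{W_1\cup W_2}$. In particular, no $W^{s,p}(\Omega_e)$ extension theorem and no regularity assumption on $\partial\Omega$ enter the argument.

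First I would take test functions $f\in C_c^\infty(W_1)$, $g\in C_c^\infty(W_2)$ and apply Lemma~\ref{lemma: Liouville reduction} to write $\langle\Lambda_{q_i}f,g\rangle = \langle\Lambda_{\gamma_i}(\gamma_i^{-1/2}f),\gamma_i^{-1/2}g\rangle$. Subtracting and inserting $\pm\Lambda_{\gamma_2}(\gamma_1^{-1/2}f)$ exactly as in the proof of Theorem~\ref{thm: reduction} yields a three-term decomposition $\langle(\Lambda_{q_1}-\Lambda_{q_2})f,g\rangle = I_1+I_2+I_3$, where
\begin{equation*}
I_2 = \langle(\Lambda_{\gamma_1}-\Lambda_{\gamma_2})(\gamma_1^{-1/2}f),\gamma_2^{-1/2}g\rangle
\end{equation*}
and both $I_1$ and $I_3$ carry a factor $\gamma_1^{-1/2}-\gamma_2^{-1/2}$.

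I would then estimate $I_1$ and $I_3$. Since $\supp g\subset W_2$, the pairing in $I_1$ is unchanged if we replace $\gamma_1^{-1/2}-\gamma_2^{-1/2}$ by $\chi(\gamma_1^{-1/2}-\gamma_2^{-1/2})$, and similarly for $I_3$. Assumption \eqref{eq: partial difference bound} combined with the Sobolev embedding $H^{(2s+\epsilon)/\theta_0,\theta_0 n/s}(\R^n)\hookrightarrow C^{0,s+\epsilon}(\R^n)$ (valid because $(s+\epsilon)/\theta_0 > s+\epsilon$) provides uniform $C^{0,s+\epsilon}(\R^n)$ bounds on $m_1,m_2$, hence on $\gamma_i^{\pm 1/2}$ via Lemma~\ref{lem: Holder regularity}. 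A standard multiplication estimate in $H^s(\R^n)$ by compactly supported Hölder functions then gives
\begin{equation*}
\|(\gamma_1^{-1/2}-\gamma_2^{-1/2})g\|_{H^s(\R^n)} \leq C\|\chi(\gamma_1^{-1/2}-\gamma_2^{-1/2})\|_{C^{0,s+\epsilon}(\R^n)}\|g\|_{H^s(\R^n)},
\end{equation*}
and analogously for $f$. Together with the uniform ellipticity bound $\|\Lambda_{\gamma_i}\|_*\leq C$ and Lemma~\ref{lem: multiplication sobolev}, this controls $|I_1|+|I_3|$. The middle term $I_2$ is handled directly: since $\gamma_i^{-1/2}f$ and $\gamma_i^{-1/2}g$ are supported in $W$, we immediately obtain $|I_2|\leq C\|\Lambda_{\gamma_1}-\Lambda_{\gamma_2}\|_{\widetilde H^s(W)\to(\widetilde H^s(W))^*}\|f\|_{H^s}\|g\|_{H^s}$.

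To close the argument, I would apply the Gagliardo--Nirenberg inequality for Bessel potential spaces on $\R^n$ (\cite[Corollary~A.3]{RZ2022unboundedFracCald}) together with Lemma~\ref{lem: Holder regularity} to interpolate
\begin{equation*}
\|\chi(\gamma_1^{-1/2}-\gamma_2^{-1/2})\|_{C^{0,s+\epsilon}(\R^n)} \leq C\bigl(\|\gamma_1-\gamma_2\|_{L^\infty(W)}^{1/2}+\|\gamma_1-\gamma_2\|_{L^\infty(W)}^{(1-\theta_0)/2}\bigr),
\end{equation*}
where \eqref{eq: partial difference bound} absorbs the high-regularity norm. A localized version of Theorem~\ref{thm: exterior stability} applied on the compact set $\overline W\subset\Omega_e$ then converts $\|\gamma_1-\gamma_2\|_{L^\infty(W)}$ into $\|\Lambda_{\gamma_1}-\Lambda_{\gamma_2}\|_{\widetilde H^s(W)\to(\widetilde H^s(W))^*}$, and assembling the bounds for $I_1,I_2,I_3$ yields \eqref{eq: partial reduction}. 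The main technical obstacle will be to verify that the multiplication and interpolation estimates can be carried out purely on $\R^n$ without extension operators, and that the exterior stability theorem admits a local formulation adapted to the partial data set $W$; both are enabled precisely by the compact nesting $W_1\cup W_2\Subset W\Subset\Omega_e$, and the resulting constants depend only on the quantities $s,\epsilon,n,\Omega,C_0,\theta_0,W_1,W_2,W$ and $\gamma_0$ as required.
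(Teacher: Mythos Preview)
Your proposal is correct and follows essentially the same route as the paper: introduce a cutoff supported in $W$ and equal to $1$ on $W_1\cup W_2$, use the Liouville reduction to obtain the three-term decomposition $I_1+I_2+I_3$, bound the $(\Lambda_{\gamma_1}-\Lambda_{\gamma_2})$-term directly, and control the remaining two via a $C^{0,s+\epsilon}$ multiplier estimate combined with Gagliardo--Nirenberg interpolation in Bessel potential spaces on $\R^n$ and the localized exterior stability estimate on $W$.

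The one organisational difference is in the interpolation step. You pass through Lemma~\ref{lem: Holder regularity} to reduce the $C^{0,s+\epsilon}$ norm of $\chi(\gamma_1^{-1/2}-\gamma_2^{-1/2})$ to quantities involving $m_1-m_2$, and then interpolate on the background deviations; this naturally produces both powers $\tfrac12$ and $\tfrac{1-\theta_0}{2}$, as in Theorem~\ref{thm: reduction}. The paper instead writes $\gamma_i^{-1/2}-1=-\tfrac{m_i}{m_i+1}$ and applies the Gagliardo--Nirenberg inequality directly to $\eta(\gamma_1^{-1/2}-\gamma_2^{-1/2})$ in Bessel potential spaces, followed by a composition estimate for $t\mapsto t/(t+1)$ to bound $\|\tfrac{m_i}{m_i+1}\|_{H^{(2s+\epsilon)/\theta_0,\theta_0 n/s}(\R^n)}$ in terms of $C_0$. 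This yields only the single exponent $\tfrac{1-\theta_0}{2}$ and avoids Lemma~\ref{lem: Holder regularity} for the seminorm part, which is why the final estimate~\eqref{eq: partial reduction} has two terms rather than three. Either route is valid; yours stays closer to the full-data argument, while the paper's is slightly more streamlined.
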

\begin{proof} Suppose that $f \in C_c^\infty(W_1)$ and $g \in C_c^\infty(W_2)$. Choose a smooth cutoff function $\eta|_{W_1\cup W_2}=1$, $0 \leq \eta \leq 1$ and $\supp(\eta) \subset W$. We explain next how to modify the proof of Theorem \ref{thm: reduction}, in order to obtain the quantitative partial data reduction result. To do so, we will establish sufficient estimates next. We first note that $m_1,m_2 \in H^{2s+\epsilon,n/s}(\R^n) \supset H^{s,n/s}(\R^n)$ with explicit bounds for the norms by \ref{item: partial assumption 1}, \ref{item: partial assumption 3}, the Gagliardo--Nirenberg inequality in Bessel potential spaces and the monotonicity of Bessel potential spaces. Therefore we may continue the proof of Theorem \ref{thm: reduction} up to the point where we have the terms $I_1, I_2, I_3$ as in the proof of Theorem \ref{thm: reduction}.

Since $W_1,W_2 \subset W$, we have
\begin{equation}
        |I_3|\leq C\|\Lambda_{\gamma_1}-\Lambda_{\gamma_2}\|_{\widetilde{H}^s(W) \to (\widetilde{H}^{s}(W))^*}\|f\|_{H^s(\R^n)}\|g\|_{H^s(\R^n)}.
\end{equation}
as in the earlier proof.

We may suppose, by taking $\epsilon$ smaller if necessary, that $0 <\epsilon <1-s$ and $2s+\epsilon$ is not an integer by the monotonicity of Bessel potential spaces. For the term $I_1$, we may calculate that
    \begin{equation}
    \label{eq: partial estimate I1}
        \begin{split}
            |I_1|&\leq \|\Lambda_{\gamma_1}\|_{\widetilde{H}^s(W) \to (\widetilde{H}^{s}(W))^*}\|\gamma_1^{-1/2}f\|_{H^s(\R^n)}\|(\gamma_1^{-1/2}-\gamma_2^{-1/2})g\|_{H^s(\R^n)}\\
            &\leq C\|\Lambda_{\gamma_1}\|_{\widetilde{H}^s(W) \to (\widetilde{H}^{s}(W))^*}\|f\|_{H^s(\R^n)}\|\eta(\gamma_1^{-1/2}-\gamma_2^{-1/2})\|_{C^{0,s+\epsilon}(\R^n)}\|g\|_{H^s(\R^n)}
        \end{split}
    \end{equation}
since $g=\eta g$.
We may then estimate using the embeddings to Hölder spaces, Gagliardo--Nirenberg inequality in Bessel potential spaces, the formula \eqref{eq: L infty bound}, boundedness of the multiplication with $\eta$ and support conditions, that
\begin{equation}
\begin{split}
    &\|\eta(\gamma_1^{-1/2}-\gamma_2^{-1/2})\|_{C^{0,s+\epsilon}(\R^n)} \\
    &\quad\leq C\|\eta(\gamma_1^{-1/2}-\gamma_2^{-1/2})\|_{H^{2s+\epsilon,n/s}(\R^n)}\\
        &\quad\leq C\|\eta(\gamma_1^{-1/2}-\gamma_2^{-1/2})\|^{\theta_0}_{H^{\frac{2s+\epsilon}{\theta_0},\theta_0 n/s}(\R^n)}\|\eta(\gamma_1^{-1/2}-\gamma_2^{-1/2})\|_{L^{\infty}(\R^n)}^{1-\theta_0}\\
        &\quad\leq C\|\gamma_1^{-1/2}-\gamma_2^{-1/2}\|^{\theta_0}_{H^{\frac{2s+\epsilon}{\theta_0},\theta_0 n/s}(\R^n)}\|\gamma_1^{-1/2}-\gamma_2^{-1/2}\|_{L^{\infty}(W)}^{1-\theta_0}\\
        &\quad\leq C\left\|\frac{m_1}{m_1+1}-\frac{m_2}{m_2+1}\right\|^{\theta_0}_{H^{\frac{2s+\epsilon}{\theta_0},\theta_0 n/s}(\R^n)}\|\gamma_1-\gamma_2\|^{\frac{1-\theta_0}{2}}_{L^{\infty}(W)} \\
        &\quad\leq C\|\gamma_1-\gamma_2\|^{\frac{1-\theta_0}{2}}_{L^{\infty}(W)} \\
\end{split}
\end{equation}
where in the last step we used the triangle inequality and a composition estimate for functions in Bessel potential spaces (see e.g.~\cite{AdamsComposition} and references therein). In fact, for any $t >0 $, $1 < p < \infty$, $\delta >0$, there exists a polynomial function $P\colon \R^2 \to \R$ (of degree at most $t+1$ and with nonnegative coefficients) such that
\begin{equation}
    \left\|\frac{m}{m+1}\right\|_{H^{t,p}(\R^n)} \leq P(\norm{m}_{H^{t,p}(\R^n)},\norm{m}_{L^\infty(\R^n)}) 
\end{equation}
for all $m \in H^{t,p}(\R^n)\cap L^\infty(\R^n)$ with $m+1 \geq \delta$. This can be argued similarly as \eqref{eq: mi by mi+1} in the proof of Lemma \ref{lemma: q multiplication} but we decided to recall this alternative estimate here.
This let us conclude that
\begin{equation}
        |I_1|\leq C\|f\|_{H^s(\R^n)}\|g\|_{H^s(\R^n)}\|\Lambda_{\gamma_1}-\Lambda_{\gamma_2}\|_{\widetilde{H}^s(W) \to (\widetilde{H}^{s}(W))^*}^{\frac{1-\theta_0}{2}}
\end{equation}
by the exterior stability estimate (Theorem \ref{eq: exterior stability}) since $\gamma_1,\gamma_2$ are continuous by \ref{item: partial assumption 3}. This is possible since the exterior stability estimate also holds for the considered partial data, i.e. $\norm{\gamma_1-\gamma_2}_{L^\infty(W)} \leq C\|\Lambda_{\gamma_1}-\Lambda_{\gamma_2}\|_{\widetilde{H}^s(W) \to (\widetilde{H}^{s}(W))^*}$. We may argue similarly with $I_2$, which completes the proof.
\end{proof}

\begin{remark} Theorem \ref{thm: reduction} could be also adapted into the partial data setting as in Proposition \ref{thm: partial data reduction}. We omit presenting these details.
\end{remark}

\section{Exponential instability}\label{sec: instability}

In this Section we complement the above considerations about stability with an instability result in the flavour of \cite{KochRulandSaloInstability}. We start by recalling the definitions of $\epsilon$-discrete sets and $\delta$-nets. These can be given in the setting of a generic metric space $(X,d)$.

\begin{definition}
Let $(X,d)$ be a metric space, and assume $\epsilon,\delta>0$. A set $Y\subset X$ is said to be $\epsilon$-discrete if for all $y_1,y_2\in Y$ with $y_1\neq y_2$ it holds $d(y_1,y_2)\geq\epsilon$. A set $Z\subset X$ is said to be a $\delta$-net for a set $X_1\subset X$ if for all $x\in X_1$ there exists $z\in Z$ such that $d(x,z)\leq \delta$.
\end{definition}

We can now prove Theorem \ref{thm: instability}, which shows that the exponential stability obtained in Section \ref{sec: stability estimates} can not be improved. For this result it will suffice to consider conductivities whose exterior value is the constant $1$.

\begin{proof}[Proof of Theorem \ref{thm: instability}]
Define the set
$$ X_{\ell\epsilon\beta} \vcentcolon = \{f\in C^\ell_c(B_1) \,;\, \|f\|_{L^\infty}\leq \epsilon, \|f\|_{C^\ell}\leq\beta\}, $$
and let $\widetilde X_{\ell\epsilon\beta} \vcentcolon = 1+X_{\ell\epsilon\beta}$. By \cite[Lemma~2]{MandacheInstability} (see also \cite[Lemma~3.4]{RS-Instability}) we deduce the existence of an $\epsilon$-discrete set $\widetilde Z\subset \widetilde X_{\ell\epsilon\beta}$ of cardinality $$|\widetilde Z|\geq \exp \left( C(\beta/\epsilon)^{n/\ell} \right),$$ with $C=C(\ell,n)>0$, where $\widetilde X_{\ell\epsilon\beta}$ is seen as a metric space with respect to the $L^\infty$ norm. By careful construction, it is also possible to ensure that $1\leq \gamma \leq 2$ for all $\gamma\in\widetilde Z$ (see \cite[Proof of Corollary 1]{MandacheInstability}). Let now $\gamma\in\widetilde Z$, and let $q$ be the corresponding transformed potential. Since $\gamma\geq 1$, for all $v\in\tilde H^s(B_1)$ we get
\begin{align*}
    \langle \mbox{div}_s\Theta_\gamma\nabla^s v,v \rangle_{H^{-s}(\R^n)\times H^s(\R^n)} & =B_{\gamma}(v,v) \\ & \geq \|(-\Delta)^{s/2}v\|_{L^2(\R^n)}^2\\ & \geq \lambda_{1,s}\|v\|^2_{L^2(B_1)},
\end{align*} where $\lambda_{1,s}$ is the first Dirichlet eigenvalue of $(-\Delta)^s$ in $B_1$ (see e.g.~\cite[Proof of Lemma~3.2]{RS-Instability}). Therefore, by the fractional Liouville reduction
\begin{align*}
    \langle ((-\Delta)^s+q)v,v \rangle_{H^{-s}(\R^n)\times H^s(\R^n)} & = \langle \mbox{div}_s\Theta_\gamma\nabla^s(\gamma^{-1/2}v),\gamma^{-1/2}v \rangle_{H^{-s}(\R^n)\times H^s(\R^n)} \\ &  \geq \lambda_{1,s}\|\gamma^{-1/2}v\|^2_{L^2(B_1)} \\ & \geq \frac{\lambda_{1,s}}{2}\|v\|^2_{L^2(B_1)},
\end{align*}
and eventually $\|((-\Delta)^s+q)^{-1}\|_{L^2(B_1)\rightarrow L^2(B_1)} \leq \frac{2}{\lambda_{1,s}}$.
\\

Since $m\vcentcolon =\gamma^{1/2}-1\geq 0$, for the potential we compute
\begin{align*}
    \|q\|_{L^\infty(B_1)} & = \left\| \frac{(-\Delta)^sm}{1+m} \right\|_{L^\infty(B_1)} \lesssim \left\| (-\Delta)^sm\right\|_{L^\infty(B_1)} \leq \left\| (-\Delta)^sm\right\|_{C^{\ell-2s}(\R^n)}.
\end{align*}
 Observe now that the fractional Laplacian $(-\Delta)^s$ acts as a bounded operator between $C^r$ and $C^{r-2s}$ for any $r,r-2s\in \mathbb R^+\setminus \N$. In order to see this, write the symbol as $$|\xi|^{2s}=\psi(\xi)|\xi|^{2s}+(1-\psi(\xi))|\xi|^{2s},$$ where $\psi\in C^\infty_c(\R^n)$ is $1$ near the origin. The second term on the right hand side belongs to H\"ormander's class $S^{2s}_{1,0}$, and thus has the correct mapping properties (see e.g.~\cite{Taylor96}). The first term on the right hand side corresponds to a convolution operator with kernel $k=\mathcal F^{-1}(\psi)\ast \mathcal F^{-1}(|\xi|^{2s})$, which is $L^1$ as a convolution of a Schwartz function with a homogeneous function of order $-n-2s$. Therefore $u\mapsto k\ast u$ is bounded between any two H\"older spaces by the Fourier characterization of H\"older spaces (\cite[Section 2.3.7]{Triebel-Theory-of-function-spaces}). 
 Thus
 $$ \|q\|_{L^\infty(B_1)} \lesssim \left\| (-\Delta)^sm\right\|_{C^{\ell-2s}(\R^n)} \lesssim \left\| m\right\|_{C^{\ell}(\R^n)}. $$
 Moreover, $\gamma\in \widetilde Z \subset \widetilde X_{\ell\epsilon\beta}$ implies $$m^2+2m=(m+1)^2-1=\gamma-1\in X_{\ell\epsilon\beta},$$ and thus in particular $m^2+2m\in C^\ell_c(B_1)$. Define the function $F \colon x\mapsto \sqrt{1+x}-1$, which is smooth for non-negative $x$ and has the property that $F(m^2+2m)=m$. Since $C^\ell_c$ is closed under composition with smooth functions, we have $m\in C^\ell_c(B_1)$ as well, with $\|m\|_{C^\ell(B_1)}\lesssim \|m^2+2m\|_{C^\ell(B_1)} $.
 
 Eventually
$$\|q\|_{L^\infty(B_1)}\lesssim \left\| m\right\|_{C^{\ell}(\R^n)} \lesssim \|\gamma-1\|_{C^\ell(B_1)}\leq \beta.$$

We shall now apply \cite[Proposition~2.4]{RS-Instability}. Observe that the eigenvalue condition of the said proposition is not needed here, because in this case the potential $q$ is comes from a fractional Liouville reduction, and so the Dirichlet problem for the transformed operator is already known to be well-posed. 

Recall that $\{f_{h,k,l}\}$ is the basis of $L^2(B_3\setminus\overline B_2)$ constructed in \cite[Lemma~2.1]{RS-Instability}. The operator $\Gamma(q)\vcentcolon = \Lambda_q-\Lambda_0$ mapping $L^2(B_3\setminus\overline B_2)$ to itself is completely characterized by the quantities
$$a^{h_2,k_2,l_2}_{h_1,k_1,l_1}(q)\vcentcolon = \langle \Gamma(q) f_{h_1,k_1,l_1},f_{h_2,k_2,l_2}\rangle_{L^2(B_3\setminus\overline B_2)}.$$ 
Let
$$X\vcentcolon = \{\,\Gamma(q) \,;\, q\in \mathcal Q,  \|\Gamma(q)\|_X<\infty\,\},$$
where $\mathcal{Q}$ is the class of all transformed potentials, and $$\|\Gamma(q)\|_X\vcentcolon = \sup_{h_i,k_i,l_i}(1+\max\{h_1+k_1,h_2+k_2\})^{n+2}|a^{h_2,k_2,l_2}_{h_1,k_1,l_1}(q)|.$$
By \cite[Proposition~2.4]{RS-Instability} we obtain
\begin{align*} |a^{h_2,k_2,l_2}_{h_1,k_1,l_1}(q)| & \leq C_{n,s}e^{-c\max\{ h_1+k_1,h_2+k_2 \}}\|q\|_{L^\infty(B_1)}\|((-\Delta)^s+q)^{-1}\|_{L^2(B_1)\rightarrow L^2(B_1)} \\ & \leq C'_{n,s} \beta e^{-c\max\{ h_1+k_1,h_2+k_2 \}},
\end{align*}
which means that if $\gamma\in\widetilde X_{\ell\epsilon\beta}$ then $\Gamma(q)\in X$.

With this in mind, we can follow the proof of Lemma 3 in \cite{MandacheInstability} (see also \cite[Lemma~3.2]{RS-Instability}) to construct a $\delta$-net $\overline Y$ for the image under $\Gamma \colon q\mapsto \Lambda_q-\Lambda_0$ of the set of potentials corresponding to conductivities in $\widetilde X_{\ell\epsilon\beta}$. Here $\delta \vcentcolon = \exp \left(-\epsilon^{-\frac{n}{(2n+3)\ell}}\right)$, and the cardinality of $\overline Y$ is
$$ |\overline Y| \leq \beta \exp \left( C'' \epsilon^{-n/\ell}\right). $$
It is clear that for $\beta$ large enough it must hold that $|\widetilde Z| > |\overline Y|$, which means that there exists two conductivities $\gamma_1,\gamma_2 \in \widetilde X_{\ell\epsilon\beta}$ with $\|\gamma_1-\gamma_2\|_{L^\infty(B_1)}\geq \epsilon$ and $$\|\Lambda_{q_1}-\Lambda_{q_2}\|_{L^2(B_3\setminus\overline B_2)\rightarrow L^2(B_3\setminus\overline B_2)} \lesssim \|\Gamma(q_1)-\Gamma(q_2)\|_X \lesssim \delta,$$
in light of the fact that $\Lambda_q$ is a bounded operator $L^2(B_3\setminus\overline{B}_2)\to L^2(B_3\setminus\overline{B}_2)$ (see \cite[Remarks~2.2, 2.5]{RS-Instability}) and the related estimate \cite[eq. (21)]{RS-Instability}.\\

Since $\gamma_1=\gamma_2=1$ in $\R^n\setminus \overline B_1$, by \cite[Lemma~4.1]{RGZ2022GlobalUniqueness} we deduce $\Lambda_q = \Lambda_\gamma$ as operators on $H^s(\R^n\setminus\overline{B}_1)\to (H^s(\R^n\setminus\overline{B}_1))^*$. This lets us conclude that
\begin{align*}
\|\Lambda_{\gamma_1}-\Lambda_{\gamma_2}\|_{H^s(B_3\setminus\overline{B}_2)\to (H^s(B_3\setminus\overline{B}_2))^*}& \leq \|\Lambda_{q_1}-\Lambda_{q_2}\|_{H^s(B_3\setminus\overline{B}_2)\to (H^s(B_3\setminus\overline{B}_2))^*} \\ & \leq \|\Lambda_{q_1}-\Lambda_{q_2}\|_{L^2(B_3\setminus\overline B_2)\rightarrow L^2(B_3\setminus\overline B_2)} \\ & \lesssim  \delta. 
\end{align*} 
\end{proof}

\appendix

\section{Proofs of auxiliary results}
\label{sec: Proofs of auxiliary lemmas}

\begin{proof}[Proof of Theorem~\ref{thm: embeddings between Bessel and Gagliardo}]
    Let us first consider the case $\Omega=\R^n$. In fact, this follows from embeddings between different function spaces:
    \begin{enumerate}[(i)]
        \item\label{Triebel Lizorkin monotone} If $s\in\R$, $0<p<\infty$ and $0<q_0\leq q_1\leq \infty$ then $F^s_{p,q_0}(\R^n)\hookrightarrow F^s_{p,q_1}(\R^n)$ (cf.~\cite[Section~2.3.2, Proposition~ 2]{Triebel-Theory-of-function-spaces}).
        \item\label{Triebel Lizorkin equal Bessel} If $s\in\R$, $1<p<\infty$ then $F^s_{p,2}(\R^n)=H^{s,p}(\R^n)$ (cf.~\cite[Section~2.3.5, eq. (2)]{Triebel-Theory-of-function-spaces}).
        \item\label{Triebel Lizorkin equal Besov} If $s\in\R$, $0<p<\infty$ then $F^s_{p,p}(\R^n)= B^s_{p,p}(\R^n)$ (cf.~\cite[Section~2.3.2, Proposition~ 2]{Triebel-Theory-of-function-spaces}).
        \item\label{Besov equal Lipschitz} If $s>0$, $1\leq p<\infty$ and $1\leq q\leq \infty$ then $B^s_{p,q}(\R^n)=\Lambda^s_{p,q}(\R^n)$ (cf.~\cite[Section~2.3.5, eq. (3)]{Triebel-Theory-of-function-spaces}).
        \item\label{Lipschitz equal Slobodeckij} If $s\in\R_+\setminus\N$, $1\leq p<\infty$ then $\Lambda^s_{p,p}(\R^n)=W^{s,p}(\R^n)$ (cf.~\cite[Section~2.2.2, Remark~3]{Triebel-Theory-of-function-spaces}).
    \end{enumerate}
    Above $F^s_{p,q}(\R^n)$ denotes the Triebel--Lizorkin spaces, $B^s_{p,q}(\R^n)$ the Besov spaces and $\Lambda^s_{p,q}(\R^n)$ the Lipschitz spaces. For their definition and more details we refer to the monograph \cite{Triebel-Theory-of-function-spaces}. These embeddings imply
    \begin{equation}
    \label{eq: embedding p<2}
        W^{s,p}(\R^n)\overset{\ref{Lipschitz equal Slobodeckij}}{=}\Lambda^s_{p,p}(\R^n)\overset{\ref{Besov equal Lipschitz}}{=}B^s_{p,p}(\R^n)\overset{\ref{Triebel Lizorkin equal Besov}}{=}F^{s}_{p,p}(\R^n)\overset{\ref{Triebel Lizorkin monotone}}{\hookrightarrow}F^s_{p,2}(\R^n)\overset{\ref{Triebel Lizorkin equal Bessel}}{=}H^{s,p}(\R^n)
    \end{equation}
    if $s\in\R_+\setminus\N$ and $1<p\leq 2$. On the other hand if $s\in\R_+$, $2\leq p<\infty$ then we have
    \begin{equation}
    \label{eq: embedding p>2}
        H^{s,p}(\R^n)\overset{\ref{Triebel Lizorkin equal Bessel}}{=}F^s_{p,2}(\R^n)\overset{\ref{Triebel Lizorkin monotone}}{\hookrightarrow}F^{s}_{p,p}(\R^n)\overset{\ref{Triebel Lizorkin equal Besov}}{=}B^s_{p,p}(\R^n)\overset{\ref{Besov equal Lipschitz}}{=}\Lambda^s_{p,p}(\R^n)\overset{\ref{Lipschitz equal Slobodeckij}}{=}W^{s,p}(\R^n).
    \end{equation}
    Now assume that $2\leq p<\infty$ and $\Omega\subset\R^n$ is an arbitrary open set. Let $u\in H^{s,p}(\Omega)$ and assume $v\in H^{s,p}(\R^n)$ satisfies $u=v|_{\Omega}$. Then by \eqref{eq: embedding p>2} there holds $v\in W^{s,p}(\R^n)$ with
    \[
        \|v\|_{W^{s,p}(\R^n)}\leq C\|v\|_{H^{s,p}(\R^n)}
    \]
    for some $C>0$. By defintion of the Slobodeckij spaces and $v|_{\Omega}=u$ we have
    \[
        \|u\|_{W^{s,p}(\Omega)}\leq \|v\|_{W^{s,p}(\R^n)}\leq C\|v\|_{H^{s,p}(\R^n)}.
    \]
    Taking the infimum over all extensions of $u$ and recalling the definition of the norm $\|\cdot\|_{H^{s,p}(\Omega)}$ we deduce
    \[
        \|u\|_{W^{s,p}(\Omega)}\leq C\|u\|_{H^{s,p}(\Omega)}.
    \]
    This proves Theorem~\ref{thm: embeddings between Bessel and Gagliardo}, \ref{item: embedding p > 2}. Next let $1<p\leq 2$, $s=k+\sigma$ with $k\in\N$, $0<\sigma<1$ and assume $\Omega$ is a $C^{k,1}$ domain with bounded boundary. By Lemma~\ref{lem: extension} there is an extension operator $E\colon W^{s,p}(\Omega)\to W^{s,p}(\R^n)$ such that $Eu|_{\Omega}=u$ and $\|Eu\|_{W^{s,p}(\R^n)}\leq C\|u\|_{W^{s,p}(\Omega)}$ for all $u\in W^{s,p}(\Omega)$ and some $C>0$. Using \eqref{eq: embedding p>2} we have
    \[
        \|Eu\|_{H^{s,p}(\R^n)}\leq C\|Eu\|_{W^{s,p}(\R^n)}\leq C\|u\|_{W^{s,p}(\Omega)}.
    \]
    By the definition of the $\|\cdot\|_{H^{s,p}(\Omega)}$ norm this implies
    \[
        \|u\|_{H^{s,p}(\Omega)}\leq C\|u\|_{W^{s,p}(\Omega)}
    \]
    and we can conclude the proof of Theorem~\ref{thm: embeddings between Bessel and Gagliardo}, \ref{item: embedding p < 2}.
\end{proof}

\begin{proof}[Proof of Lemma \ref{lem: multiplication Hoelder}]
    Statement \ref{item: basic case} has been essentially proved in \cite[Lemma~5.3]{DINEPV-hitchhiker-sobolev} but under the assumptions $0\leq \phi\leq 1$ and $\mu=1$. We give a proof of this slightly more general result here for the sake of completeness. Since $C^{0,\mu}(\Omega)\subset L^{\infty}(\Omega)$, we have $\|\phi u\|_{L^p(\Omega)}\leq \|\phi\|_{L^{\infty}(\Omega)}\|u\|_{L^p(\Omega)}$ and hence it remains to control the Gagliardo seminorm. We have
    \[
    \begin{split}
        &[\phi u]_{W^{s,p}(\Omega)}^p=\int_{\Omega}\int_{\Omega}\frac{|(\phi u)(x)-(\phi u)(y)|^p}{|x-y|^{n+sp}}\,dxdy\\
        &= \int_{\Omega}\int_{\Omega}\frac{|\phi(x)(u(x)-u(y))+(\phi(x)-\phi(y))u(y)|^p}{|x-y|^{n+sp}}\,dxdy\\
        &\leq 2^{p-1}\left(\int_{\Omega}\int_{\Omega}\frac{|\phi(x)|^p|u(x)-u(y)|^p}{|x-y|^{n+sp}}\,dxdy+\int_{\Omega}\int_{\Omega}\frac{|\phi(x)-\phi(y)|^p|u(y)|^p}{|x-y|^{n+sp}}\,dxdy\right)\\
        &\leq 2^{p-1}\left(\|\phi\|^p_{L^{\infty}(\Omega)}[u]_{W^{s,p}(\Omega)}^p+\int_{\Omega}\int_{\Omega}\frac{|\phi(x)-\phi(y)|^p|u(y)|^p}{|x-y|^{n+sp}}\,dxdy\right).
    \end{split}
    \]
    Now we write
    \[
    \begin{split}
        &\int_{\Omega}\int_{\Omega}\frac{|\phi(x)-\phi(y)|^p|u(y)|^p}{|x-y|^{n+sp}}\,dxdy=\int_{\Omega}\int_{\Omega}\frac{|\phi(x)-\phi(y)|^p|u(y)|^p}{|x-y|^{n+sp}}\chi_{B_1(y)}(x)\,dxdy\\
        &+\int_{\Omega}\int_{\Omega}\frac{|\phi(x)-\phi(y)|^p|u(y)|^p}{|x-y|^{n+sp}}\chi_{B_1(y)^c}(x)\,dxdy=\vcentcolon I_1+I_2,
    \end{split}
    \]
    where $\chi_A$ denotes the characteristic function of the set $A\subset \R^n$. Next we estimate $I_1,I_2$. We have
    \[
    \begin{split}
        I_1&\leq [\phi]_{C^{0,\mu}(\Omega)}^p\int_{\Omega}|u(y)|^p\left(\int_{B_1(y)}\frac{dx}{|x-y|^{n+(s-\mu)p}}\right)dy\\
        &=[\phi]_{C^{0,\mu}(\Omega)}^p\|u\|_{L^p(\Omega)}^p\int_{B_1(0)}\frac{dz}{|z|^{n+(s-\mu)p}}=\frac{\omega_n}{(\mu-s)p}[\phi]_{C^{0,\mu}(\Omega)}^p\|u\|_{L^p(\Omega)}^p
    \end{split}
    \]
    and
    \[
    \begin{split}
        I_2&\leq 2^p\|\phi\|_{L^{\infty}(\Omega)}^p\int_{\Omega}|u(y)|^p\left(\int_{B_1(y)^c}\frac{dx}{|x-y|^{n+sp}}\right)\,dy\\
        &=2^p\|\phi\|_{L^{\infty}(\Omega)}^p\|u\|_{L^p(\Omega)}^p\int_{B_1(0)^c}\frac{dz}{|z|^{n+sp}}=\frac{2^p\omega_n}{sp}\|\phi\|_{L^{\infty}(\Omega)}^p\|u\|_{L^p(\Omega)}^p,
    \end{split}
    \]
    where $\omega_n$ is the area of the unit sphere. Therefore, we get
    \[
    \begin{split}
        [\phi u]_{W^{s,p}(\Omega)}^p&\leq 2^{p-1}(1+\frac{2^p\omega_n}{sp}\|\phi\|_{L^{\infty}(\Omega)}^p+\frac{\omega_n}{(\mu-s)p}[\phi]_{C^{0,\mu}(\Omega)}^p)\|u\|_{W^{s,p}(\Omega)}^p\\
        &\leq C\left(1+\frac{\mu}{s(\mu-s)}\right)\|\phi\|_{C^{0,\mu}(\Omega)}^p\|u\|_{W^{s,p}(\Omega)}^p,
    \end{split}
    \]
    where $C$ only depends on $n$ and $p$. This establishes the assertion \ref{item: basic case}.\\
    
    Now let $s=k+\sigma$ with $k\in\N$ and $\sigma\in (0,1)$. By classical results we have $\phi u \in W^{k,p}(\Omega)$ with $\|\phi u\|_{W^{k,p}(\Omega)}\leq C\|\phi\|_{C^k(\Omega)}\|u\|_{W^{k,p}(\Omega)}$ for some $C>0$ only depending on $n,k$ and $p$. Thus it remains to estimate the Gagliardo seminorm of $\partial^{\alpha}(\phi u)$ for all multi-indices $\alpha$ of order $k$. By the Leibniz rule we have
    \[
    \begin{split}
        &[\partial^{\alpha}(\phi u)]_{W^{\sigma,p}(\Omega)}\leq C\sum_{\beta\leq \alpha}[\partial^{\alpha-\beta}\phi \partial^{\beta}u]_{W^{\sigma,p}(\Omega)}\\
        &\leq C\sum_{\beta\leq\alpha:\,\alpha\neq\beta}[\partial^{\alpha-\beta}\phi\partial^{\beta}u]_{W^{\sigma,p}(\Omega)}+C[\phi\partial^{\alpha}u]_{W^{\sigma,p}(\Omega)}\\
        &\leq C\sum_{\beta\leq \alpha:\,\alpha\neq \beta}\left(1+\frac{\mu}{\sigma(\mu-\sigma)}\right)^{1/p}\|\partial^{\beta-\alpha}\phi\|_{C^{0,\mu}(\Omega)}\|\partial^{\beta}u\|_{W^{\sigma,p}(\Omega)}\\
        &+C\left(1+\frac{\mu}{\sigma(\mu-\sigma)}\right)^{1/p}\|\phi\|_{C^{0,\mu}(\Omega)}\|\partial^{\alpha}u\|_{W^{\sigma,p}(\Omega)}\\
        &\leq CC_0\left(1+\frac{\mu}{\sigma(\mu-\sigma)}\right)^{1/p}\sum_{\beta\leq \alpha:\,\alpha\neq \beta}\|\partial^{\beta-\alpha}\phi\|_{C^{0,\mu}(\Omega)}\|\partial^{\beta}u\|_{W^{1,p}(\Omega)}\\
        &+C\left(1+\frac{\mu}{\sigma(\mu-\sigma)}\right)^{1/p}\|\phi\|_{C^{0,\mu}(\Omega)}\|\partial^{\alpha}u\|_{W^{\sigma,p}(\Omega)}\\
        &\leq C(1+C_0)\left(1+\frac{\mu}{\sigma(\mu-\sigma)}\right)^{1/p}\left(\sum_{\ell=0}^k\|\nabla^{\ell}\phi\|_{C^{0,\mu}(\Omega)}\right)\|u\|_{W^{s,p}(\Omega)}
    \end{split}
    \]
for all $\alpha\in \N^n_0$ with $|\alpha|=k$ and some constant $C>0$ only depending on $n,k$. In the last estimate we used the bound from the case $0<s<1$ and \cite[Proposition~2.2]{DINEPV-hitchhiker-sobolev}, where $C_0>0$ is the norm of the extension operator $E\colon W^{1,p}(\Omega)\to W^{1,p}(\R^n)$ (see \cite[Theorem~9.7]{FA-Brezis}).
\end{proof}

\begin{proof}[Proof of Lemma \ref{lem: zero extension}]
    First note that $\dist(\supp(u),\Omega^c)\geq d>0$. In fact, for $x\in\supp(u)\subset\Omega$, $y\in \Omega_e$ consider the curve $\gamma\colon [0,1]\to \R^n$ with $\gamma(t)\vcentcolon = x+t(y-x)$. Then there exists $t_0\in (0,1)$ such that $\gamma(t_0)\in \partial\Omega$ because otherwise one would have $[0,1]=\gamma^{-1}(\Omega)\cup \gamma^{-1}(\Omega_e)$ which is not possible. But then $|x-y|\geq |x-\gamma(t_0)|\geq d>0$. Next we distinguish the cases $0<s<1$, $s=k\in\N$ and $s=k+\sigma$ with $k\in\N$, $0<\sigma<1$.\\
    
    \noindent\textit{Case $0<s<1$}: Clearly, we have $\|\Bar{u}\|_{L^p(\R^n)}=\|u\|_{L^p(\Omega)}$ and thus it remains to show $[\Bar{u}]_{W^{s,p}(\R^n)}\leq C\|u\|_{W^{s,p}(\Omega)}$. By symmetry we can split $[\Bar{u}]_{W^{s,p}(\R^n)}$ as
    \[
    \begin{split}
        &\int_{\R^n}\int_{\R^n}\frac{|\Bar{u}(x)-\Bar{u}(y)|^p}{|x-y|^{n+sp}}\,dxdy=\int_{\Omega}\int_{\Omega}\frac{|u(x)-u(y)|^p}{|x-y|^{n+sp}}\,dxdy\\
        &+\int_{\Omega^c}\int_{\Omega^c}\frac{|\bar{u}(x)-\bar{u}(y)|^p}{|x-y|^{n+sp}}\,dxdy+2\int_{\Omega}\int_{\Omega^c}\frac{|u(x)|^p}{|x-y|^{n+sp}}\,dxdy\\
        &=[u]_{W^{s,p}(\Omega)}^p+2\int_{\supp(u)}|u(x)|^p\left(\int_{\Omega^c}\frac{dy}{|x-y|^{n+sp}}\,dy\right)dx.
    \end{split}
    \]
    For any $x\in \supp(u)$ there holds $B_{d/2}(x)\subset \Omega$ and hence we have
    \[
    \begin{split}
        \int_{\Omega^c}\frac{dy}{|x-y|^{n+sp}}\,dy&\leq  \int_{B_{d/2}(x)^c}\frac{dy}{|x-y|^{n+sp}}=\omega_n\int_{d/2}^{\infty}\frac{r^{n-1}}{r^{n+sp}}\,dr=\frac{\omega_n}{sp}\left(\frac{d}{2}\right)^{-sp}.
    \end{split}
    \]
    Therefore we get
    \[
        [\bar{u}]_{W^{s,p}(\R^n)}^p\leq [u]_{W^{s,p}(\Omega)}^p+2^{1+sp}\frac{\omega_n}{sp}d^{-sp}\|u\|_{L^p(\Omega)}^p.
    \]
    This shows
    \[
    \|\bar{u}\|_{W^{s,p}(\R^n)}\leq C\left(1+\frac{2^s}{(sp)^{1/p}}d^{-s}\right)\|u\|_{W^{s,p}(\Omega)}.
    \]
    
    \noindent\textit{Case $s=k\in\N$}: Let $\Omega_d\subset \Omega$ be the $d/2$-neighborhood of $\supp(u)$. Let $\phi\in C_c^{\infty}(\R^n)$. Then $\supp(\phi)\cap \overline{\Omega_d} = \emptyset$ or $\supp(\phi)\cap \overline{\Omega_d}\subset\Omega$ is a nonempty compact set. In the latter case choose a cutoff function $\eta_d\in C_c^{\infty}(\Omega)$, $0\leq \eta_d\leq 1$ with $\eta|_{V}=1$, where $V\Subset \Omega$ such that $\supp(\phi)\cap\overline{\Omega_d}\subset V$. Then for any $u\in W^{1,p}(\Omega)$ there holds
    \[
    \begin{split}
        \int_{\R^n}\bar{u}\partial_i\phi\,dx&=\int_{\Omega}u\eta_d\partial_i\phi\,dx=\int_{\Omega}u\partial_i(\eta_d\phi)\,dx-\int_{\Omega}u\phi\partial_i\eta_d\,dx\\
        &=-\int_{\Omega}\partial_iu(\eta_d\phi)\,dx=-\int_{\Omega}\partial_iu (\eta_d-1)\phi\,dx-\int_{\Omega}(\partial_iu)\phi\,dx\\
        &=-\int_{\Omega}(\partial_iu)\phi\,dx=-\int_{\R^n}\overline{\partial_iu}\phi\,dx
    \end{split}
    \]
    for all $1\leq i\leq n$. This identity clearly also holds if the intersection is empty. Thus $\partial_i\bar{u}=\overline{\partial_i u}\in L^p(\R^n)$ for all $1\leq i\leq n$. Hence, if $u\in W^{1,p}(\Omega)$ then $\bar{u}\in W^{1,p}(\R^n)$ and by the previous case there holds $\|\bar{u}\|_{W^{1,p}(\R^n)}= \|u\|_{W^{1,p}(\Omega)}$. By induction we see that for any $k\in\N$ we have $\bar{u}\in W^{k,p}(\R^n)$ whenever $u\in W^{k,p}(\Omega)$. Moreover, there holds $\|\bar{u}\|_{W^{k,p}(\R^n)}=\|u\|_{W^{k,p}(\Omega)}$.\\
    
    \noindent\textit{Case $s=k+\sigma$ with $k\in\N$, $0<\sigma<1$}: This follows immediately from the previous two cases.
\end{proof}

\bibliography{refs} 

\bibliographystyle{alpha}

\end{document}